
\documentclass[11pt]{article}

\usepackage[english]{babel} 
\usepackage{amsmath,amssymb,amsfonts,amsthm}
\usepackage{mathrsfs, esint, dsfont}
\usepackage{moreverb,rotating,graphics,float}
\usepackage{caption, subcaption}
\usepackage[colorlinks, linkcolor={blue}, citecolor={red}]{hyperref}
\usepackage{framed,fancybox}
\usepackage{enumerate}
\usepackage{csquotes}
\usepackage{slashed}
\usepackage{physics}
\usepackage[left=2cm, right=2cm, top=2cm, bottom=2cm]{geometry}
\usepackage{graphicx}
\usepackage{indentfirst}
\usepackage{enumitem}
\usepackage{mathtools}
\graphicspath{ {./images/} }
\usepackage{tikz-cd}
\usepackage{MnSymbol}

\makeatletter
\renewcommand\Mn@Text@With@MathVersion[1]{%
  \textnormal{\ifx\Mn@Bold\math@version\bfseries\fi#1}%
}
\makeatother

\numberwithin{equation}{section} 
\theoremstyle{plain} 
\newtheorem{theorem}{Theorem}[section]
\newtheorem{proposition}[theorem]{Proposition}
\newtheorem{lemma}[theorem]{Lemma}
\newtheorem{corollary}[theorem]{Corollary}
\newtheorem{notation}[theorem]{Notation}

\theoremstyle{remark} 
\newtheorem{remark}[theorem]{Remark}

\theoremstyle{definition} 
\newtheorem{definition}[theorem]{Definition}

\newtheorem{example}[theorem]{Example}



\newcommand{\ph}{\varphi}

\newcommand{\pairing}[2]{\langle #1\mathbin{|}#2\rangle} 
\newcommand{\Pairing}[2]{\llangle #1\mathbin{|}#2\rrangle} 
\newcommand{\product}[3]{\llangle #1\mathbin{|}#2\mathbin{|}#3\rrangle}

\newcommand{\RA}[1]{\overrightarrow{#1}}
\newcommand{\LA}[1]{\overleftarrow{#1}}
\newcommand{\LRA}[1]{\overleftrightarrow{#1}}

\def\cA{{\mathcal A}}

\def\cM{{\mathcal X}}

\def\cX{{\mathcal X}}
\def\cY{{\mathcal Y}}
\def\cZ{{\mathcal Z}}


\def\C{{\mathbb C}}

\def\RR{{\mathbb R}}
\def\SS{{\mathbb S}}
\def\T{{\mathbb T}}

\def\fg{{\mathfrak g}}



\DeclareMathOperator{\IM}{Im}
\DeclareMathOperator{\DER}{Der}
\DeclareMathOperator{\HOM}{Hom}


\newcommand{\A}{\mathcal{A}}  

\newcommand{\D}{\mathcal{D}}  
\renewcommand{\d}{\mathrm{d}_\Psi} 
\newcommand{\dee}{\mathrm{d}} 
\renewcommand{\H}{\mathcal{H}}  
\newcommand{\ox}{\otimes}     
\newcommand{\Sf}{\mathbb{S}}  
\newcommand{\X}{\mathcal{X}}  
 
\newcommand{\Y}{\mathcal{Y}} 
\renewcommand{\.}{\cdot}      
\newcommand{\alphar}{\overrightarrow{\alpha}}
\newcommand{\alphal}{\overleftarrow{\alpha}}
\newcommand{\nablar}{\overrightarrow{\nabla}} 
\newcommand{\nablal}{\overleftarrow{\nabla}} 
\newcommand{\sharpr}{\overrightarrow{\sharp}}
\newcommand{\sharpl}{\overleftarrow{\sharp}}
\newcommand{\flatr}{\overrightarrow{\flat}}
\newcommand{\flatl}{\overleftarrow{\flat}}

\title{Comparison of Levi-Civita connections in noncommutative geometry}

\author{Alexander Flamant\dag \thanks{email: \texttt{alexander.flamant@ens.psl.eu},
\texttt{b.mesland@math.leidenuniv.nl}, \texttt{renniea@uow.edu.au}
}, 
Bram Mesland\S, Adam Rennie\ddag
\\[3pt]
\dag Ecole Normale Superieure PSL, Paris, France
\\[3pt]
\S Mathematisch Instituut, Universiteit Leiden, Netherlands
\\[3pt]
\ddag School of Mathematics and Applied Statistics, University of Wollongong\\
Wollongong, Australia}

\begin{document}

\maketitle

\begin{abstract}
We compare the constructions of Levi-Civita connections for noncommutative algebras developed in  \cite{AWcurvature,Gosw&alCentredBimod,MRLC}. The assumptions in these various constructions differ, but when they are all defined, we provide direct translations between them. An essential assumption is that the (indefinite) Hermitian inner product on differential forms/vector fields provides an isomorphism with the module dual. By exploiting our translations and clarifying the simplifications that occur for centred bimodules, we extend the existence results for Hermitian torsion-free connections in  \cite{AWcurvature,Gosw&alCentredBimod}.
\end{abstract}

%
%

\section{Introduction}

%
%
%
%
%

In the last decade, there have been several approaches to defining Levi-Civita connections, and so a curvature tensor, in noncommutative geometry, \cite{AWcurvature,BMbook,Gosw&alCentredBimod,BGJ1,MRLC,Rosenberg}. All use the algebraic definition of connections on modules, but then different starting points and assumptions enter to prove existence and/or uniqueness. Our aim is to clarify the relationships between the various existence and uniqueness arguments. By providing a consistent language to translate between the different approaches, we extend the existence results for Hermitian torsion-free connections in  \cite{AWcurvature,Gosw&alCentredBimod}.

The first difference to address is whether the Levi-Civita connection is defined on vector fields/derivations or on differential forms. The vector field/derivation approach is taken in \cite{AWcurvature,AW2,Rosenberg} in the setting of $\theta$-deformations of free torus actions. 
The differential form approach fits well with the algebraic machinary of differential calculi described in \cite{BMbook}, and was the approach taken in \cite{Gosw&alCentredBimod,BGJ1,MRLC}. By being able to relate the two approaches, we recover much of the language 
and flexibility of differential geometry in the noncommutative setting.

An existence and uniqueness proof for the Levi-Civita connection when the one-forms belong to a class of centred bimodules (satisfying some side conditions including finite projectivity) was given in \cite{Gosw&alCentredBimod,BGJ1}, with $\theta$-deformations of free torus actions as a main example. In \cite{MRLC} the centredness of the one-forms was removed, with similar algebraic side conditions to those in \cite{Gosw&alCentredBimod,BGJ1}. The general framework of \cite{AWcurvature,AW2} yields a uniqueness statement for metric compatible torsion-free affine connections, but existence is only shown in examples, which crucially satisfy (up to duality) the stronger assumptions of \cite{Gosw&alCentredBimod,BGJ1}. 
An additional feature of \cite{AWcurvature,AW2,Gosw&alCentredBimod,BGJ1} is the introduction of non-degenerate indefinite signature Hermitian inner products (on vector fields and one-forms respectively) to incorporate pseudo-Riemannian metrics. A different, possibly more restrictive, approach to indefinite metrics appears in \cite{MRLC}.

The setting in which all of the above approaches make sense and can be compared is that of \emph{centred bimodules}.
To describe our results, recall that a centred bimodule $\cX$ over an algebra $\A$ is generated as an $\A$-bimodule by central elements $x_j\in\cX$, which satisfy $x_ja=ax_j$ for all $a\in\A$.

The special role of centredness for modules of one-forms $\Omega^1_\dee$  is due to the existence, proved by Skeide \cite{Skeide}, of a bimodule map $\sigma:(\Omega^1_\dee)^{\ox2}\to (\Omega^1_\dee)^{\ox2}$ satisfying $\sigma(\omega\ox\eta)=\eta\ox\omega$ 
when at least one of  $\omega,\eta\in \Omega^1_\dee$ is central. Thus we have a natural idempotent $\Psi=\frac{1}{2}(1+\sigma)$ on the two-tensors.

The remaining differences revolve around the assumptions on the inner product on vector fields or one-forms. In \cite{AWcurvature,AW2} the inner product on vector fields is assumed to yield an injective map $\X\ni X\mapsto (Y\mapsto \pairing{X}{Y})\in \RA{\HOM}_\A(\X,\A)$, and we call this weak non-degeneracy. In \cite{Gosw&alCentredBimod,BGJ1} the same map is assumed to be an isomorphism, and we call this strong non-degeneracy. Very recently, \cite{AH25} has obtained existence results using strong non-degeneracy and various symmetry constraints on the inner product.

In \cite{MRLC} it is assumed that there is a pre-$C^*$-inner product, that is, positive definite and therefore weakly non-degenerate, and that the algebra $\A$ is local (spectrally invariant and dense) inside a $C^*$-algebra. These analytic assumptions were used in two places: to guarantee the existence of frames for the one-forms\footnote{a particular kind of generating set, \cite{FL02,MRLC}}; and to obtain existence of a limit defining  an Hermitian torsion-free connection on one-forms. The existence of a frame implies strong non-degeneracy of the (in this case positive definite) inner product.

In Appendix \ref{sec:fgpipdb} we show that strongly non-degenerate inner products guarantee the existence of a suitable generalisation for frames. In Section \ref{subsec:exist} we show that for centred bimodules with strongly non-degenerate inner product the existence of an Hermitian torsion-free connection on one-forms follows purely algebraically. We stress that the latter argument does not hold in the generality of non-centred bimodules. Thus, for centred bimodules, the analytic assumption of locality in \cite{MRLC} can be effectively replaced by the algebraic assumption of strong non-degeneracy. Nevertheless, establishing strong non-degeneracy in practice may well require analytic, or other, data. 

To relate our results for differential forms to the results of \cite{AWcurvature,AW2}, we show that a centred Hermitian (in particular strongly non-degenerate) differential calculus 
gives rise to a Lie algebra of derivations such that the left and right dual bimodules define left and right pseudo-Riemannian calculi in the sense of  \cite{AWcurvature}.

We also extend the existence result of \cite{Gosw&alCentredBimod} by combining the methods of \cite{Gosw&alCentredBimod,BGJ1} with those of \cite{MRLC} in Theorem \ref{thm: existence and uniqueness on centred bimods}. The bimodules considered in \cite{Gosw&alCentredBimod,BGJ1} satisfy a stronger condition than centredness (see condition (ii) of \cite[Theorem 4.1]{Gosw&alCentredBimod}), whereas the proof given here uses only centredness. Using the aforementioned duality results, we derive an existence statement for metric compatible torsion-free affine connections on vector fields, in the sense of \cite{AWcurvature, AW2}, when the inner product is strongly non-degenerate, in Corollary \ref{cor:existence-on-vector-fields}.

Section \ref{sec:back-ground} reviews the definitions and setup of \cite{AWcurvature,MRLC}. Section \ref{sec:vec-form} relates the formalisms of vector fields and differential forms. Our results relating the differential form  connections to the vector field affine connections of \cite{AWcurvature} are in Section \ref{sec:LC}, as are our existence and uniqueness statements. Appendix \ref{sec:fgpipdb} presents the results on strongly non-degenerate inner products on suitably finite modules, and are of independent interest.

For simplicity, we work with unital complex $*$-algebras throughout, and refer to \cite[Section 7.1]{MRLC} for methods to remove the unitality assumption. \\

\textbf{Acknowledgements} AR thanks Universiteit Leiden for hospitality in 2024. AF also thanks Universiteit Leiden for hosting him as an intern in 2024, as part of his master's program at Ecole Normale Superieure PSL. The authors thank J. Arnlind, J. Bhowmick and D. Goswami for feedback on a preliminary version, and thank J. Arnlind for sharing the preprint \cite{AH25}.

\section{Background}
\label{sec:back-ground}

We begin by recalling the relevant parts of the formalism of \cite{AWcurvature} and \cite{MRLC}.

\subsection{Pseudo-Riemannian calculi on noncommutative vector fields}

In this section we begin by summarising the framework and results developed in \cite{AWcurvature} to extend (pseudo)-Riemannian geometry to the noncommutative context. To define and study connections for noncommutative algebras, \cite{AWcurvature} uses derivations as generalisations of vector fields on a smooth manifold.
Indeed,  smooth vector fields on a manifold $M$ are in one-to-one correspondence with derivations on $C^\infty(M)$, i.e. maps $\partial : C^\infty(M) \rightarrow C^\infty(M)$ such that $\partial(fg) = \partial(f) g + f \partial(g)$ for any two functions $f, g \in C^\infty(M)$. 
Vector fields then form both a Lie algebra (using composition of derivations) and a $C^\infty(M)$-module by right multiplication, as sections of a vector bundle. To generalise this construction to the noncommutative framework,
we recall the definitions and terminology from \cite{AWcurvature}.

\begin{definition}
\label{defn:AW-basics}
Let $\cA$ be a unital $*$-algebra.  Let  $\DER(\cA)$ be the set of derivations on $\cA$, a (complex) Lie algebra, but not in general an $\cA$-module. 
We assume the existence of a (right) $\cA$-module $\cX(\cA)$, a (complex) Lie subalgebra $\fg_\C$ of $\DER(\cA)$ and of a $\C$-linear map
\begin{equation}
	\ph : \fg_\C  \rightarrow \cX(\cA).
\label{eq:lie-alg-map}
\end{equation}
We call $\cX(\A)$ the module of (right) noncommutative vector fields. 
We denote by $\fg$ the real Lie algebra of Hermitian derivations in $\fg_\C$, i.e. those satisfying $\partial(a^*)^* = \partial(a)$ for all $a \in \cA$.
\end{definition}

Using this formalism we can import classical definitions from Riemannian geometry. First we need a notion of a metric on vector fields. We will need to consider both right and left $\A$-modules, so we adapt definitions from \cite{AWcurvature} in the obvious manner to allow both cases. The map $\ph$ can be defined in the same way for left or right modules.

\begin{definition}
\label{def:right-metric}
	We say that $(\cX(\cA), \bra{\cdot}\ket{\cdot}_\cA)$ is a \emph{right metric $\cA$-module} if $\cX(\A)$ is a right $\A$-module and $\bra{\cdot}\ket{\cdot}_\cA : \cX(\cA) \times \cX(\cA) \rightarrow \cA$ is a map such that for all $X,Y,Z\in\cX(\A)$ and $a\in\A$ we have
	\[
	\begin{split}
		& \pairing{X}{Y + Z}_\cA = \pairing{X}{Y}_\cA + \pairing{X}{Z}_\cA \\
		& \pairing{X}{Ya}_\cA = \pairing{X}{Y}_\cA a \\
		& \pairing{X}{Y}_\cA^* = \pairing{Y}{X}_\cA
	\end{split}
	\]
	and the inner product is weakly non-degenerate, meaning
	\begin{equation}
	\pairing{X}{Y}_\cA = 0 \mbox{ for all }Y \in \cX(\cA)\mbox{ implies }X = 0.
	\label{eq:weak-nd}
	\end{equation} 
	Similarly a \emph{left metric $\cA$-module} is a left $\A$-module $\cY(\A)$ endowed with a map $\prescript{}{\A}{\bra{\cdot}\ket{\cdot}}: \cY(\cA) \times \cY(\cA) \rightarrow \cA$ such that
	\[
	\begin{split}
		& \prescript{}{\A}{\pairing{X}{Y + Z}} = \prescript{}{\A}{\pairing{X}{Y}} + \prescript{}{\A}{\pairing{X}{Z}} \\
		& \prescript{}{\A}{\pairing{aX}{Y}} = a \prescript{}{\A}{\pairing{X}{Y}} \\
		& \prescript{}{\A}{\pairing{X}{Y}}^* = \prescript{}{\A}{\pairing{Y}{X}}
	\end{split}
	\]
    and  the inner product is weakly non-degenerate, meaning
    \begin{equation*}
    \prescript{}{\A}{\pairing{X}{Y}} = 0 \mbox{ for all }Y \in \cY(\cA)\mbox{ implies }X = 0.
    \end{equation*} 
    We omit subscripts $\cA$ when there is no ambiguity.
\end{definition}

\begin{remark}
    A pre-$C^*$-module over a pre-$C^*$-algebra satisfies  $\pairing{X}{X}_\A\geq0$ for all $X$, and so satisfies Definition \ref{def:right-metric}, but not conversely. The reason for the extra freedom allowed by \eqref{eq:weak-nd} is to allow for indefinite real inner products. That said, we will need to replace \eqref{eq:weak-nd} by a stronger non-degeneracy condition in the sequel to obtain our main results.
\end{remark}

There are three pieces of initial data for the approach of \cite{AWcurvature}: the Lie algebra $\fg$, the module $\cX(\cA)$ (or $\cY(\cA)$) and the metric $\bra{\cdot}\ket{\cdot}_\cA$ (or $\prescript{}{\A}{\bra{\cdot}\ket{\cdot}}$). To ensure the three elements are compatible, in particular so that we do not have $\fg = 0$, we require further conditions.

\begin{definition}
\label{defn:metric-calc}
	Denoting the pair $(\fg, \ph)$ by $\fg_\ph$, then the triple $(\cX(\cA), \bra{\cdot}\ket{\cdot}_\cA, \fg_\ph)$ (resp. $(\cY(\cA), \prescript{}{\A}{\bra{\cdot}\ket{\cdot}}, \fg_\ph)$) is called a \emph{right (resp. left) metric calculus} over $\cX(\cA)$ (resp. $\cY(\A)$) if
	 the image $\ph(\fg)$ generates $\cX(\cA)$ (resp. $\cY(\A)$) as a right (resp. left) $\cA$-module. It is called a \emph{right (resp. left) real metric calculus} if in addition
		\[
		 \pairing{X}{Y}^* = \pairing{X}{Y}\quad \mbox{for all }X, Y \in \ph(\fg).
		 \]
\end{definition}




\subsection{Modules of differential forms}
Differential forms over noncommutative algebras have appeared in many ways, from directly geometric to cohomological  applications. Our approach, and needs, are closest to those in \cite{BMbook,Landi}, and follow the notation and conventions of \cite{MRLC}.

In order to state our strong non-degeneracy condition on inner products, and later to dualise from differential forms to vector fields, we recall the definitions of module duals, as well as the centre of a bimodule.

\begin{definition}
\label{def:hom-centre}
Let $\cY$ be a right $\A$-module. Then $\RA{\HOM}_\A(\cY, \A)$ is the set of right module homomorphims $\cY\to\A$, which is a left $\A$-module. Similarly if $\cY$ is a left $\A$-module, then $\LA{\HOM}_\A(\cY, \A)$ is the set of left module homomorphims $\cY\to\A$, which is a right $\A$-module, and if $\cY$ is an $\A$-bimodule then $\LRA{\HOM}_\A(\cY, \A)$ is the set of bimodule maps $\cY\to\A$, and 
$\mathcal{Z}(\cY)=\{y\in\cY:\,ay=ya\ \mbox{ for all }a\in\A\}$ is the centre of $\cY$. Finally, an $\A$-bimodule $\cY$ is \emph{centred} if $\mathcal{Z}(\cY)$ generates $\cY$ as an $\A$-bimodule (or equivalently as a right or left module).
\end{definition}

Much of our discussion of modules of forms and vector fields is simplified by the notion of a $\dag$-bimodule.

\begin{definition}
\label{defn:stern-bimod}
A \emph{$\dag$-bimodule} over the $*$-algebra $\A$ is an $\A$-bimodule $\mathcal{X}$ that is equipped with  an antilinear involution $\dag:\mathcal{X}\to\mathcal{X}$ such that $(axb)^{\dag}=b^{*}x^{\dag}a^{*}$. Given a $\dag$-bimodule $\X$, a $\dag$-\emph{bimodule derivation} is a bimodule derivation $\mathrm{d}:\A\to \X$ such that $\mathrm{d}(a^{*})=-\mathrm{d}(a)^{\dag}$.
\end{definition}
\begin{remark}
In \cite{MRLC}, $\dag$-bimodules were also required to be finite projective and with an inner product. We will only use $\dag$-bimodules in this context below.
\end{remark}
The universal one-forms associated to a unital $*$-algebra $\A$ are the kernel of the multiplication map
\[
\Omega^1_u(\A)=\ker(m:\A\ox\A\to\A)=\big\{\sum a_i\delta(b_i):\,a_i,b_i\in\A\big\}.
\] 
The differential $\delta:\A\to\Omega^1_u(\A)$ is given by $\delta(b)=1\ox b-b\ox1$.
The universal forms are a $\dag$-bimodule with $\dag(a\delta(b))=-\delta(b^*)a^*=-\delta(b^*a^*)+b^*\delta(a^*)$, $a,b\in\A$.

The universal feature of $\Omega^1_u(\A)$ is that whenever we have an $\A$-$\dag$-bimodule $M$, and a $\dag$-bimodule derivation ${\rm d}:\A\to M$, there exists a $\dag$-bimodule map $\pi:\Omega^1_u(\A)\to M$ such that $\pi\circ\delta(b)=\dee(b)$ for all $b\in M$. The data $(M,{\rm d})$ is called a first order $\dag$-calculus for $\A$. 
\begin{definition}\cite[Definitions 1.4 and 1.15]{BMbook} 
\label{defn:first-order}
A first order differential structure $(\Omega^1_\dee(\A),\dag)$ for the $*$-algebra $\A$ is a first order $\dag$-calculus $(\Omega^1_\dee(\A),{\rm d})$ for $\A$ such that  $\Omega^1_\dee(\A)=\pi(\Omega^1_u(\A))$. 
The first order differential structure $(\Omega^1_\dee(\A),\dag)$ is Hermitian if $\Omega^1_\dee(\A)$ is a finitely generated projective $\A$-module and carries a right $\A$-valued inner product $\pairing{\cdot}{\cdot}_\A$ satisfying, for all $\omega,\eta\in\Omega^{1}_{\dee}(\A)$ and $a\in\A$
\begin{enumerate}[noitemsep]
\item $\pairing{\omega}{\eta a}_\A=\pairing{\omega}{\eta}_\A a$;
\item $\pairing{\omega}{\eta}^{*}_\A=\pairing{\eta}{\omega}_\A$;
\item $\pairing{a\omega}{\eta}_\A=\pairing{\omega}{a^{*}\eta}_\A$;
\item  the inner product is strongly non-degenerate, meaning that the map $g:\omega\mapsto \left( \eta \mapsto \pairing{\omega^\dagger}{\eta}_\cA \right)$ is an isomorphism $\Omega^{1}_{\dee}(\A)\to \overrightarrow{\mathrm{Hom}}_{\A}(\Omega^{1}(\A),\A)$ of left $\A$-modules.
\end{enumerate} 
\end{definition}
\begin{remark}
In \cite{MRLC}, condition 3. of Definition \ref{defn:first-order} was inadvertently omitted, despite being required throughout.
\end{remark}
\begin{remark}
An Hermitian first order differential structure has a left inner product on one-forms as well, given by ${}_\A\pairing{\omega}{\rho}:=\pairing{\omega^\dag}{\rho^\dag}_\A$. Observe that we have not asked for the inner product to be positive definite, and to incorporate indefinite inner products, we have condition 4. This is the form of non-degeneracy used in \cite{Gosw&alCentredBimod,BGJ1} and which we require to relate the three approaches we consider, and is stronger than the non-degeneracy imposed in Definition \ref{def:right-metric} by \cite{AWcurvature,AW2}, which amounts to injectivity of the map $\Omega^{1}_{\dee}(\A)\to \overrightarrow{\mathrm{Hom}}_{\A}(\Omega^{1}(\A),\A)$. 
\end{remark}
\begin{remark}
In \cite[Definition 3.5]{AH25}, symmetric bilinear forms (such as $g$) satisfying condition 4 are referred as being \emph{invertible}. Such forms are also related to the notion of \emph{invertible quantum metric} of \cite[Definition 1.15]{BMbook}. See also Remark \ref{rem:algebraicsequences} in Appendix A.
\end{remark}
\begin{remark} In \cite{MRLC}, the $*$-algebra $\A$ is always assumed to be dense and spectral invariant in a $C^{*}$-algebra $A$. This assumption allows for a well-defined notion of positivity in $\A$. The inner products considered in \cite{MRLC} are positive definite in the sense of Hilbert $C^{*}$-modules. In this context, the inner product will satisfy condition 4, a fact implied by the existence of frames.
\end{remark}

\begin{example}
Given a spectral triple $(\A,\H,\D)$, see \cite{CPR} for instance, the commutators of algebra elements $a\in\A$ and the self-adjoint operator $\D$ are bounded. We can then define the module of one-forms 
\[
\Omega^{1}_{\D}(\A):=\textnormal{span}\left\{a[\D,b]:a,b\in\A\right\}\subset\mathbb{B}(\H).
\]
We obtain a first order differential calculus $\mathrm{d}:\A\to \Omega^{1}_{\D}(\A)$ by setting $\mathrm{d}(b):=[\D,b]$. This calculus carries an involution $(a[\D,b])^{\dag}:=[\D,b]^{*}a^{*}$ induced by the operator adjoint.
Given the extra data of a strongly non-degenerate inner product $\pairing{\cdot}{\cdot}_\A$ on $\Omega^1_\D(\A)$, we find that $(\Omega^{1}_{\D}(\A),\pairing{\cdot}{\cdot}_\A,\dag)$ is an Hermitian differential structure. 
\end{example}

We recall some constructions from \cite{Landi,MRLC} for $(\Omega^{1}_{\dee}(\A),\dag)$. Writing $T^{k}_{\dee}(\A):=\Omega_\dee^1(\A)^{\ox_\A k}$ and $T^*_\dee(\A)=\oplus_kT^k_\dee(\A)$, the universal differential
forms $\Omega^*_{u}(\A)$ admit a representation
\begin{align}
\pi_\dee:\Omega^k_{u}(\A)\to T^{k}_{\dee}(\A)\quad \pi_\dee(a_0\delta(a_1)\cdots\delta(a_k))&=a_0\dee(a_1)\ox\cdots\ox\dee(a_k).
\end{align}
The modules $T^k_\dee(\A)$ all carry right (and so left) inner products whenever $\Omega^1_\dee(\A)$ does. For example, the inner product on $T^2_\dee(\A)$ is defined on simple tensors $\omega\ox\rho,\eta\ox\tau$ by $\pairing{\omega\ox\rho}{\eta\ox\tau}_\A=\pairing{\rho}{\pairing{\omega}{\eta}_\A\,\tau}_\A$.

Typically $\pi_\dee$ is not a map of differential algebras, as $T^*_\dee(\A)$ is not a differential algebra, but $\pi_\dee$ is an $\A$-bilinear map of associative $*$-$\A$-algebras, \cite{Landi,MRLC}. The $\dag$-structure on 
$T^{*}_{\dee}(\A)$ is given by the $\dag$ on $\Omega^1_\dee(\A)$ and (well-defined for the balanced tensor products)
\[
(\omega_1\ox\omega_2\ox\cdots\ox\omega_k)^\dag:= 
\omega_k^\dag\ox\cdots\ox\omega_2^\dag\ox\omega_1^\dag.
\]
 
The maps $\pi_\dee:\Omega^{*}_{u}(\A)\to T^*_\dee(\A)$ 
and $\delta:\Omega^{k}_{u}(\A)\to \Omega^{k+1}_{u}(\A)$ 
are typically not compatible  in the sense that $\delta$ need not map $\ker \pi_\dee$ to itself. Thus in general,  $T^{*}_{\dee}(\A)$ cannot be made into a differential algebra.
The issue to address is that
there are universal
forms $\omega\in \Omega^n_u(\A)$ for which $\pi_\dee(\omega)=0$
but $\pi_\dee(\delta(\omega))\neq0$. The latter are known as {\em junk tensors},  \cite[Chapter VI]{BRB}. 
We denote the $\A$-bimodules of junk tensors by
\begin{align*}
JT^k_\dee(\A)=\{\pi_\dee(\delta(\omega)):\,\pi_\dee(\omega)=0\}.
\end{align*}
Observe that the junk submodule only depends on the representation of the universal forms.

\begin{definition}
\label{ass:fgp-metric-junk}
A second order differential structure  $(\Omega^{1}_{\dee},\dag,\Psi)$ for a $*$-algebra $\A$ is a first order differential structure $(\Omega^{1}_{\dee}(\A),\dag)$ together with an idempotent $\Psi=\Psi^2:T^{2}_{\dee}\to T^{2}_{\dee}$ satisfying $\Psi\circ\dag=\dag\circ\Psi$ and $JT^2_\dee(\A)\subset{\rm Im}(\Psi)$. A second order differential structure is Hermitian if $(\Omega^{1}_{\dee}(\A),\dag)$ is an Hermitian first order structure with right inner product $\pairing{\cdot}{\cdot}_\A$, such that $\Psi=\Psi^{2}=\Psi^{*}$ is a projection. 
\end{definition}

A second order differential structure admits an exterior derivative $\d:\Omega^1_\dee(\A)\to T^2_\dee(\A)$ 
via 
\begin{equation}
\label{eq: second-order-diff}
\d(\rho)=(1-\Psi)\circ \pi_\dee\circ\delta\circ \pi_\dee^{-1}(\rho).
\end{equation} 
The differential satisfies $\d(\dee(a)) = 0$ for all $a \in \cA$ and
\[
    \dee_\Psi(\omega^\dag) = \dee_\Psi(\omega)^\dag, \qquad \d(a \omega b) = (1 - \Psi)(\dee a \ox \omega b) + a (\d \omega) b - (1 - \Psi)(a \omega \ox \dee b).
\]
The differential allows us to define curvature for modules, and formulate torsion for connections on one-forms. See \cite[Section 3]{MRLC} for details.

\section{Relating the vector field and differential form formalisms}
\label{sec:vec-form}

\subsection{Dualising noncommutative differential forms}

We will start by comparing Hermitian first and second order differential structures of \cite{MRLC} and the metric calculus of \cite{AWcurvature}. Later we will consider connections and compare the two constructions of Levi-Civita connections, and also relate them to the approach of \cite{Gosw&alCentredBimod,BGJ1}. Defining vector fields as (suitable submodules of) duals of one-forms appears in \cite{BGL} in their discussion of Koszul formulae. Indeed, most of our dualising formulae mirror those of \cite{BGL}, as they must, but as we work in the Hermitian setting, rather than bilinear, and with somewhat different hypotheses, we start from scratch. That said, most of our results in this section have analogues in \cite{BGL}.

\begin{definition}
\label{defn:vec-fields}
Let $\cA$ be a unital $*$-algebra and $(\Omega_{\dd}^1(\cA), \bra{\cdot}\ket{\cdot}_\cA, \dagger)$ be an Hermitian first-order differential structure over $\cA$ in the sense of Definition \ref{defn:first-order}.

We define right-linear, left-linear and bilinear vector fields to be respectively the right-$\cA$-linear, left-$\cA$-linear and bi-$\cA$-linear dual of $\Omega_{\dd}^1(\cA)$. We denote these
\begin{align*}
    & \RA{\cX}(\A) := \RA{\HOM}_\A(\Omega_{\dd}^1(\A), \A) \\
    & \LA{\cX}(\A) := \LA{\HOM}_\A(\Omega_{\dd}^1(\A), \A) \\
    & \LRA{\cX}(\A) := \LRA{\HOM}_\A(\Omega_{\dd}^1(\A), \A).
\end{align*}
\end{definition}

\noindent The inner product on $\Omega_{\dd}^1(\cA)$ induces $\C$-linear maps 
 \[
\begin{split}
	&  \sharpr : \ \Omega_{\dd}^1(\cA) \to \RA{\cX}(\cA) 
	  \quad \qquad \omega \mapsto \left( \eta \mapsto \pairing{\omega^\dagger}{\eta}_\cA \right) \\
	&  \sharpl : \ \Omega_{\dd}^1(\cA) \to \LA{\cX}(\cA) 
	 \quad \qquad \omega \mapsto \left( \eta \mapsto \pairing{\eta^\dagger}{\omega}_\cA \right),\quad \omega,\eta\in\Omega^1_\dee(\A).
\end{split}
\]

We write the $\C$-bilinear pairing of a right-linear vector field $X$ (left-linear vector field $Y$) with a differential one-form $\omega$ as 
\[
\Pairing{X}{\omega} := X(\omega) \in \cA \quad\mbox{and}\quad \Pairing{\omega}{Y} := Y(\omega) \in \cA.
\] 
Right-linear (resp. left-linear) vector fields form a left (resp. right) $\cA$-module and the pairings satisfy
\[
    \Pairing{aX}{\omega b} = a\Pairing{X}{\omega} b \quad\mbox{and}\quad \Pairing{a\omega}{Yb} = a \Pairing{\omega}{Y}b,
\]
for $X \in \LA{\cX}(\cA)$, $Y \in \RA{\cX}(\cA)$, $\omega \in \Omega_{\dd}^1(\cA)$ and $a,b \in \cA$.

\begin{definition}\label{def: product vector fields}
    Let $X$ and $Y$ be respectively a right- and a left-linear vector field. We define their product to be the $\C$-linear map $X\cdot Y : T^2_\dee(\cA) \rightarrow \cA$ given on simple tensors by
    \[
        (X \. Y)(\omega \ox \eta) =  \product{X}{\omega \ox \eta}{Y} := \Pairing{X}{\omega} \Pairing{\eta}{Y}.
    \]
\end{definition}

\begin{remark}
	The product of vector fields is well-defined on balanced tensors since for $X$ right-linear and $Y$ left-linear, 
	\[
	\Pairing{X}{\omega a} \Pairing{\eta}{Y} = \Pairing{X}{\omega} a \Pairing{\eta}{Y} = \Pairing{X}{\omega} \Pairing{a \eta}{Y}
	\] 
	so $\product{X}{\omega a \ox \eta}{Y} = \product{X}{\omega \ox a \eta}{Y}$. 
\end{remark}

We also consider the $\RR$-linear isomorphism $\dagger : \LA{\cX}(\cA) \rightarrow \RA{\cX}(\cA)$ defined using duality by
\begin{equation}
\begin{split}
    \Pairing{X^\dagger}{\omega} := \Pairing{\omega^\dagger}{X}^{*}.
\end{split}    
    \label{eq:pair-dag}
\end{equation}
By abuse of notation we also write $\dagger$ for its inverse $\dagger^{-1} : \RA{\cX}(\cA) \rightarrow \LA{\cX}(\cA)$.

\begin{proposition}
\label{prop:musicalisos}
    The musical maps $\sharpr, \sharpl$ are isomorphisms of left- and right-$\cA$-modules respectively, that are compatible with the dagger maps on forms and vector fields $\dagger \circ \RA{\sharp} = \LA{\sharp} \circ \dagger$. We denote by $\flatr, \flatl$ their inverses.
\end{proposition}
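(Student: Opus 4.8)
The plan is to handle the three assertions in turn, the key observation being that $\sharpr$ is literally the map $g$ appearing in condition 4 of Definition \ref{defn:first-order}. Indeed, $\sharpr$ sends $\omega\mapsto(\eta\mapsto\pairing{\omega^\dagger}{\eta}_\A)$, which is exactly $g$, and its codomain $\RA{\HOM}_\A(\Omega^1_\dee(\A),\A)$ is by definition $\RA{\cX}(\A)$. So strong non-degeneracy \emph{is} the statement that $\sharpr$ is an isomorphism of left $\A$-modules, and the first assertion requires nothing further. For the record, the left-linearity can be seen directly: using the $\dagger$-bimodule identity $(a\omega)^\dagger=\omega^\dagger a^*$ together with properties 1 and 2 of the inner product one gets $\pairing{\omega^\dagger a^*}{\eta}_\A=\pairing{\eta}{\omega^\dagger}_\A^{\,*}a^*{}^{*}=a\,\pairing{\omega^\dagger}{\eta}_\A$, i.e. $\sharpr(a\omega)=a\,\sharpr(\omega)$.

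Next I would establish the compatibility $\dagger\circ\RA{\sharp}=\LA{\sharp}\circ\dagger$, reading the left $\dagger$ as the (abuse-of-notation) inverse $\RA{\cX}(\A)\to\LA{\cX}(\A)$ of the vector-field dagger and the right $\dagger$ as the form involution. The natural approach is a duality computation off \eqref{eq:pair-dag}. Setting $Y:=\dagger^{-1}(\sharpr(\omega))\in\LA{\cX}(\A)$, so that $Y^\dagger=\sharpr(\omega)$, the defining relation gives $\pairing{\omega^\dagger}{\rho}_\A=\Pairing{Y^\dagger}{\rho}=\Pairing{\rho^\dagger}{Y}^*=Y(\rho^\dagger)^*$. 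Taking adjoints and applying property 2 yields $Y(\rho^\dagger)=\pairing{\rho}{\omega^\dagger}_\A$, and replacing $\rho$ by $\rho^\dagger$ gives $Y(\rho)=\pairing{\rho^\dagger}{\omega^\dagger}_\A=\sharpl(\omega^\dagger)(\rho)$. Hence $Y=\sharpl(\omega^\dagger)$, which is precisely $\LA{\sharp}\circ\dagger$ evaluated at $\omega$, so the compatibility holds.

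Finally, from the compatibility I read off $\LA{\sharp}=\dagger\circ\sharpr\circ\dagger^{-1}$, a composition of bijections: the form involution is an antilinear involution, $\sharpr$ is the isomorphism just identified, and the vector-field dagger is an $\RR$-linear isomorphism. Thus $\LA{\sharp}$ is bijective, and a one-line check using property 1, namely $\sharpl(\omega a)(\eta)=\pairing{\eta^\dagger}{\omega a}_\A=\pairing{\eta^\dagger}{\omega}_\A\,a=(\sharpl(\omega)\cdot a)(\eta)$, shows it is right $\A$-linear; together these give that $\sharpl$ is an isomorphism of right $\A$-modules. I expect no genuine difficulty here beyond careful bookkeeping: the single hazard is keeping the two daggers (on forms versus on vector fields) and the accompanying conjugations consistently aligned in the middle computation, since once $\sharpr=g$ is recognised there is no real analytic or algebraic obstacle.
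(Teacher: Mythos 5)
Your proof is correct and takes essentially the same route as the paper: identifying $\sharpr$ with the map $g$ of Definition \ref{defn:first-order} so that strong non-degeneracy gives the first isomorphism, verifying $\dagger\circ\RA{\sharp}=\LA{\sharp}\circ\dagger$ by a duality computation from \eqref{eq:pair-dag}, and deducing that $\sharpl=\dagger\circ\sharpr\circ\dagger$ is an isomorphism of right modules. The only blemish is notational: in your linearity check the intermediate expression $\pairing{\eta}{\omega^\dagger}_\A^{\,*}a^{**}$ has the factors in the wrong order (the involution reverses products, giving $a\,\pairing{\omega^\dagger}{\eta}_\A$ directly), though your final expression is the correct one.
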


\begin{proof}
    Definition \ref{defn:first-order} immediately implies that $\sharpr, \sharpl$ are left- and right-$\cA$-linear maps respectively and that $\sharpr$ is an isomorphism of right modules. To show $\dag$-compatibility, let $\omega, \eta \in \Omega_{\dd}^1(\cA)$. Then using the definition of the $\dagger$-structure \eqref{eq:pair-dag} at the last equality
	\[
	\Pairing{(\omega^\dagger)^{\RA{\sharp}}}{\eta} = \pairing{\omega}{\eta}_\cA = \pairing{\eta}{\omega}_\cA^* = \Pairing{\eta^\dagger}{\omega^{\LA{\sharp}}}^{*} = \Pairing{(\omega^{\LA{\sharp}})^\dagger}{\eta}.
	\]
Hence also $\sharpl = \dag \circ \sharpr \circ \dag$ is an isomorphism of left modules.
\end{proof}

The right (resp. left) inner product on $\Omega_{\dd}^1(\cA)$ induces an Hermitian right (resp. left) inner product on $\LA{\cX}(\cA)$ (resp. $\RA{\cX}(\cA)$) by
\begin{equation}
    \pairing{X}{Y}_\cA := \pairing{X^{\LA{\flat}}}{Y^{\LA{\flat}}}_\cA \qquad \prescript{}{\cA}{\pairing{U}{V}} := \prescript{}{\cA}{\pairing{U^{\RA{\flat}}}{V^{\RA{\flat}}}},
\label{eq:vecfield-ips}
\end{equation}
for $X, Y \in \LA{\cX}(\cA)$ and $U, V \in \RA{\cX}(\cA)$. These inner products on vector fields inherit strong non-degeneracy from the inner product on one-forms.


\begin{proposition}\label{prop: central forms and bivector fields}
	The musical isomorphisms restrict to $\C$-linear isomorphisms between the centre $\cZ(\Omega_{\dd}^1(\cA))$ of $\Omega_{\dd}^1(\cA)$ and the bilinear vector fields $\overleftrightarrow{\cX}(\cA)$:
	\[
	\begin{split}
		&  \overrightarrow{\sharp} : \ \cZ(\Omega_{\dd}^1(\cA)) \xrightarrow{\cong} \overleftrightarrow{\cX}(\cA) \quad\mbox{and}\quad
		  \overleftarrow{\sharp} : \ \cZ(\Omega_{\dd}^1(\cA)) \xrightarrow{\cong} \overleftrightarrow{\cX}(\cA).
	\end{split}
	\]
\end{proposition}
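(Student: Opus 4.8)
The plan is to leverage that $\sharpr\colon \Omega^1_\dee(\A)\to \RA{\cX}(\A)$ is already a bijection by Proposition~\ref{prop:musicalisos}, and to characterise purely in terms of the inner product exactly which one-forms $\omega$ have $\sharpr(\omega)$ landing in the sub-bimodule $\LRA{\cX}(\A)\subseteq\RA{\cX}(\A)$ of bilinear vector fields. The key observation is that $\sharpr(\omega)$ is automatically a right-module map (this is condition~1 of Definition~\ref{defn:first-order}), so $\sharpr(\omega)\in\LRA{\cX}(\A)$ if and only if it is in addition left-linear, i.e. $\sharpr(\omega)(a\eta)=a\,\sharpr(\omega)(\eta)$ for all $a\in\A$ and $\eta\in\Omega^1_\dee(\A)$.

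First I would rewrite both sides of the left-linearity condition as inner products with the common second argument $\eta$. Using condition~3 of Definition~\ref{defn:first-order} gives
\[
\sharpr(\omega)(a\eta)=\pairing{\omega^\dag}{a\eta}_\A=\pairing{a^*\omega^\dag}{\eta}_\A,
\]
while combining conditions~1 and~2 gives
\[
a\,\sharpr(\omega)(\eta)=a\,\pairing{\omega^\dag}{\eta}_\A=\pairing{\omega^\dag a^*}{\eta}_\A.
\]
Thus left-linearity of $\sharpr(\omega)$ is equivalent to $\pairing{a^*\omega^\dag-\omega^\dag a^*}{\eta}_\A=0$ for all $\eta$. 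By non-degeneracy of the inner product (weak non-degeneracy, which is implied by the strong non-degeneracy of condition~4) this forces $a^*\omega^\dag=\omega^\dag a^*$ for every $a\in\A$, that is $\omega^\dag\in\cZ(\Omega^1_\dee(\A))$. Since $\dag$ is an antilinear involution with $(axb)^\dag=b^*x^\dag a^*$, one checks directly that it preserves the centre, so $\omega^\dag\in\cZ(\Omega^1_\dee(\A))$ if and only if $\omega\in\cZ(\Omega^1_\dee(\A))$. This establishes the equivalence $\sharpr(\omega)\in\LRA{\cX}(\A)\iff\omega\in\cZ(\Omega^1_\dee(\A))$.

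To conclude, since $\sharpr$ is injective and $\C$-linear, so is its restriction to $\cZ(\Omega^1_\dee(\A))$. For surjectivity onto $\LRA{\cX}(\A)$, given $X\in\LRA{\cX}(\A)\subseteq\RA{\cX}(\A)$ I would take its unique preimage $\omega=X^{\flatr}$ under the bijection $\sharpr$; as $\sharpr(\omega)=X$ is bilinear, the equivalence above gives $\omega\in\cZ(\Omega^1_\dee(\A))$, so $X$ lies in the image of the restriction. Hence $\sharpr$ restricts to a $\C$-linear isomorphism $\cZ(\Omega^1_\dee(\A))\xrightarrow{\cong}\LRA{\cX}(\A)$. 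The statement for $\sharpl$ follows by the symmetric computation (now $\sharpl(\omega)$ is automatically left-linear, and is right-linear precisely when $\omega$ is central), or alternatively from the relation $\sharpl=\dag\circ\sharpr\circ\dag$ of Proposition~\ref{prop:musicalisos} together with the facts that $\dag$ fixes $\cZ(\Omega^1_\dee(\A))$ and restricts to an involution of $\LRA{\cX}(\A)$.

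The computations are routine module bookkeeping; the one step that genuinely uses the standing hypotheses is the cancellation of $\eta$, where non-degeneracy is exactly what converts the vanishing of all pairings into the commutation relation defining centrality. I would therefore take care to verify that weak non-degeneracy suffices, and that it applies equally in the second slot (via the conjugate-symmetry of condition~2), which is what makes the argument run symmetrically for $\sharpr$ and $\sharpl$.
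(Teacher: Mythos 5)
Your proof is correct and follows essentially the same route as the paper: both arguments use conditions 1--3 of Definition \ref{defn:first-order} to move $a$ across the inner product, then cancel the test form $\eta$ by non-degeneracy to translate bilinearity of $\omega^{\sharpr}$ into centrality of $\omega$ (the paper phrases the cancellation via injectivity of $\sharpr$, you via weak non-degeneracy, which are the same thing). Your repackaging through $\omega^\dag$ and the observation that $\dag$ preserves the centre is a cosmetic variation, not a different method.
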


\begin{proof}
	We only prove this for the right isomorphisms (the left follows in a similar manner). First, observe that if $\omega \in \cZ(\Omega_{\dd}^1(\cA))$ is a central one-form then, for all $\eta \in \Omega_{\dd}^1(\cA)$ and $a \in \cA$,
	\[
		\Pairing{\omega^{\overrightarrow{\sharp}}}{a \eta} = \pairing{\omega^\dagger}{a \eta}_\cA = \pairing{a^* \omega^\dagger}{\eta}_\cA = \pairing{\omega^\dagger a^*}{\eta}_\cA = a \pairing{\omega^\dagger}{\eta}_\cA = a \Pairing{\omega^{\RA{\sharp}}}{\eta}
	\]
	so $\overrightarrow{\sharp}$ maps $\cZ(\Omega_{\dd}^1(\cA))$ into $\overrightarrow{\cX}(\cA) \cap \overleftarrow{\cX}(\cA) = \overleftrightarrow{\cX}(\cA)$.
	Conversely, if $\omega^{\overrightarrow{\sharp}} \in \LRA{\cX}(\cA)$ then, for all $\eta \in \Omega_{\dd}^1(\cA)$ and $a \in \cA$, using that $\RA{\sharp}$ is a left module map we have
	\[
		\Pairing{(a \omega)^{\RA{\sharp}}}{\eta} = a \Pairing{\omega^{\RA{\sharp}}}{\eta} = \Pairing{\omega^{\RA{\sharp}}}{a \eta} = \pairing{\omega^\dagger}{a \eta}_\cA = \pairing{(\omega a)^\dagger}{\eta}_\cA = \Pairing{(\omega a)^{\RA{\sharp}}}{\eta}
	\]
	hence, since $\RA{\sharp}$ is an isomorphism and the pairing is non-degenerate, $a \omega = \omega a$.
\end{proof}


\begin{remark}
\label{remarkable}
In general, the isomorphisms $\sharpr,\sharpl$ do not coincide on $\mathcal{Z}(\Omega^1_\dee(\A))$. This is because the two pairings that define $\sharpr,\sharpl$ are distinct and additional information is required to relate them. For if $\omega$ is a central one-form and $\eta$ is any one-form we have $\Pairing{\omega^{\sharpr}}{\eta}=\pairing{\omega^\dag}{\eta}_\A$ while $\Pairing{\eta}{\omega^{\sharpl}}=\pairing{\eta^\dag}{\omega}_\A$, and in general these are distinct elements of $\A$. 
\end{remark}

The following lemma is another expression of the duality of bilinear vector fields and central one-forms, and will be used repeatedly.

\begin{lemma}
\label{lem:bilinear-central}
If $X\in \LRA{\cX}(\cA)$ is a bilinear vector field and $\omega\in \mathcal{Z}(\Omega^1_\dee(\A))$ is a central one-form then $\Pairing{X}{\omega}\in\mathcal{Z}(\A)$ is a central element of $\A$.
\end{lemma}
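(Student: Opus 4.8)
The plan is to prove the claim by a single direct computation that chains the two-sided $\A$-linearity of $X$ together with the centrality of $\omega$, so I would not introduce any auxiliary objects. Recall that $\Pairing{X}{\omega}=X(\omega)$ and that membership of $X$ in $\LRA{\cX}(\cA)=\LRA{\HOM}_\A(\Omega^1_\dee(\A),\A)$ means precisely that $X$ is a bimodule map, i.e.\ both left $\A$-linear ($X(a\eta)=aX(\eta)$) and right $\A$-linear ($X(\eta b)=X(\eta)b$). The goal is to show $a\Pairing{X}{\omega}=\Pairing{X}{\omega}a$ for every $a\in\A$.

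First I would fix an arbitrary $a\in\A$ and use left $\A$-linearity of $X$ to pull $a$ inside the pairing on the left: $a\Pairing{X}{\omega}=aX(\omega)=X(a\omega)$. Next I would invoke the hypothesis $\omega\in\mathcal{Z}(\Omega^1_\dee(\A))$, which lets me slide $a$ across $\omega$, so that $X(a\omega)=X(\omega a)$. Finally I would use right $\A$-linearity of $X$ to pull $a$ back out on the right, obtaining $X(\omega a)=X(\omega)a=\Pairing{X}{\omega}a$. Stringing these three equalities together gives $a\Pairing{X}{\omega}=\Pairing{X}{\omega}a$ for all $a\in\A$, which is exactly the assertion that $\Pairing{X}{\omega}\in\mathcal{Z}(\A)$.

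There is no real obstacle here: the only point requiring attention is to apply the three ingredients in the correct order, namely left-linearity to absorb $a$ on the left, centrality of $\omega$ to transport $a$ past it, and right-linearity to release $a$ on the right. The result is essentially the pairing-level counterpart of the correspondence between centred one-forms and bilinear vector fields recorded in Proposition \ref{prop: central forms and bivector fields}, and it is this interplay that makes centredness the natural setting for the comparison carried out in the paper.
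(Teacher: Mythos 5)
Your proof is correct, and it takes a genuinely different --- and more elementary --- route than the paper's. You argue directly from Definition \ref{def:hom-centre}: since $X\in \LRA{\cX}(\cA)=\LRA{\HOM}_\A(\Omega^1_\dee(\A),\A)$ is a bimodule map, you get $a\Pairing{X}{\omega}=aX(\omega)=X(a\omega)=X(\omega a)=X(\omega)a=\Pairing{X}{\omega}a$, with centrality of $\omega$ used only in the middle step. The paper instead first invokes Proposition \ref{prop: central forms and bivector fields} to write $X=\eta^{\sharpr}$ for a central one-form $\eta$, and then runs a chain of inner-product manipulations, $a\pairing{\eta^\dag}{\omega}_\A=\pairing{(a\eta)^\dag}{\omega}_\A=\pairing{(\eta a)^\dag}{\omega}_\A=\pairing{a^*\eta^\dag}{\omega}_\A=\pairing{\eta^\dag}{a\omega}_\A=\pairing{\eta^\dag}{\omega a}_\A=\Pairing{X}{\omega}a$, which uses the Hermitian inner product axioms, the $\dag$-structure, and centrality of \emph{both} $\eta$ and $\omega$. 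What your argument buys: it exposes the lemma as pure bimodule algebra --- it needs neither the inner product nor the strong non-degeneracy underlying the musical isomorphism, so it holds verbatim for any bimodule map from any $\A$-bimodule into $\A$ evaluated on a central element. What the paper's route buys: it stays entirely inside the inner-product formalism and the forms-to-vector-fields dictionary on which the surrounding sections are built, but for this particular statement that is a presentational choice rather than a logical necessity. Both proofs are complete; yours is shorter and strictly more general.
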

\begin{proof}
With $X$ and $\omega$ as in the statement we use Proposition \ref{prop: central forms and bivector fields} to write $X=\eta^{\sharpr}$ with $\eta$ a central one-form. For arbitrary $a\in \A$ we have
\begin{align*}
a\Pairing{X}{\omega}&=a\Pairing{\eta^{\sharpr}}{\omega}=a\pairing{\eta^\dag}{\omega}_\A=\pairing{(a\eta)^\dag}{\omega}_\A=\pairing{(\eta a)^\dag}{\omega}_\A\\
&=\pairing{a^*\eta^\dag}{\omega}_\A=\pairing{\eta^\dag}{a\omega}_\A=\pairing{\eta^\dag}{\omega a}_\A=\Pairing{X}{\omega} a.\qedhere
\end{align*}
\end{proof}

\begin{proposition}
\label{prop:formstovectorfields}
Let $\cA$ be a unital $*$-algebra and $(\Omega_{\dd}^1(\cA), \bra{\cdot}\ket{\cdot}_\cA, \dagger)$ be an Hermitian first-order differential structure over $\cA$ in the sense of Definition \ref{defn:first-order}. Then the pair $\big( \LA{\cX}(\cA), \bra{\cdot}\ket{\cdot}_\cA \big)$ is a right metric $\A$-module. Symmetrically the pair $\big( \RA{\cX}(\cA), \prescript{}{\cA}{\bra{\.}\ket{\.}} \big)$ is a left metric $\A$-module.
\end{proposition}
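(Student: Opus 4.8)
The plan is to transport everything along the right-$\A$-module isomorphism $\flatl=(\sharpl)^{-1}\colon\LA{\cX}(\A)\to\Omega^1_\dee(\A)$ furnished by Proposition \ref{prop:musicalisos}. First I would recall that $\LA{\cX}(\A)=\LA{\HOM}_\A(\Omega^1_\dee(\A),\A)$ carries its canonical right $\A$-module structure (Definition \ref{def:hom-centre}), so it is the correct type of object for a right metric module, and that by \eqref{eq:vecfield-ips} its inner product is \emph{defined} as the pullback $\pairing{X}{Y}_\A=\pairing{X^{\flatl}}{Y^{\flatl}}_\A$ of the right inner product on one-forms. Since $\flatl$ is $\C$-linear and right-$\A$-linear, we have $(Y+Z)^{\flatl}=Y^{\flatl}+Z^{\flatl}$ and $(Ya)^{\flatl}=Y^{\flatl}a$, and these are the only structural facts the algebraic axioms require.

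With this in hand, I would check that the three algebraic conditions of Definition \ref{def:right-metric} reduce directly to the corresponding properties in Definition \ref{defn:first-order}. Additivity in the second slot follows from $\C$-linearity of $\flatl$; the right-linearity $\pairing{X}{Ya}_\A=\pairing{X}{Y}_\A a$ follows from right-$\A$-linearity of $\flatl$ together with condition 1; and the Hermitian symmetry $\pairing{X}{Y}_\A^{*}=\pairing{Y}{X}_\A$ is condition 2 transported through $\flatl$. None of these uses more than that $\flatl$ is a right-module homomorphism, so they are essentially free.

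The single substantive step is weak non-degeneracy \eqref{eq:weak-nd}, and here strong non-degeneracy (condition 4 of Definition \ref{defn:first-order}) enters. Suppose $\pairing{X}{Y}_\A=0$ for all $Y\in\LA{\cX}(\A)$. Because $\flatl$ is a bijection, as $Y$ varies the element $Y^{\flatl}$ ranges over all of $\Omega^1_\dee(\A)$, so, writing $\omega:=X^{\flatl}$, we obtain $\pairing{\omega}{\eta}_\A=0$ for every $\eta$. The map $\eta\mapsto\pairing{\omega}{\eta}_\A$ equals $g(\omega^\dagger)$ since $(\omega^\dagger)^\dagger=\omega$, and $g$ is injective by condition 4, whence $\omega^\dagger=0$, so $\omega=0$ and therefore $X=0$. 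Thus strong non-degeneracy of the one-form inner product supplies, via surjectivity of $\flatl$ and injectivity of $g$, exactly the weak non-degeneracy we must establish.

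The symmetric statement that $(\RA{\cX}(\A),\prescript{}{\A}{\pairing{\cdot}{\cdot}})$ is a left metric module I would prove identically after interchanging left and right: transport the left inner product ${}_\A\pairing{\omega}{\rho}=\pairing{\omega^\dagger}{\rho^\dagger}_\A$ along the left-$\A$-module isomorphism $\flatr=(\sharpr)^{-1}$, with the non-degeneracy step now reducing directly to injectivity of $g$ after using that $\dagger$ is a bijection on one-forms; alternatively one can deduce it from the right case via the isomorphism $\dagger\colon\LA{\cX}(\A)\to\RA{\cX}(\A)$ and the compatibility $\dagger\circ\sharpr=\sharpl\circ\dagger$. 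The only thing to watch, and the closest thing to an obstacle, is keeping the handedness of the maps and pairings straight: because $\flatl$ is a right-module isomorphism and we are constructing a right metric module, the algebraic axioms transport with no friction, and the sole genuine input is that condition 4 is strictly stronger than the weak non-degeneracy \eqref{eq:weak-nd} we are asked to verify.
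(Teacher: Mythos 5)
Your proposal is correct and follows essentially the same route as the paper: since the inner products on $\LA{\cX}(\cA)$ and $\RA{\cX}(\cA)$ are defined in \eqref{eq:vecfield-ips} as pullbacks along the musical isomorphisms of Proposition \ref{prop:musicalisos}, all the axioms of Definition \ref{def:right-metric} reduce to the corresponding properties in Definition \ref{defn:first-order}, with weak non-degeneracy supplied by strong non-degeneracy (condition 4). The paper's proof is just a terser statement of exactly this argument; your write-up fills in the same details correctly.
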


\begin{proof}
    Linearity in the second argument and Hermitian symmetry are clear from the fact that $\bra{\cdot}\ket{\cdot}_\cA$ is an Hermitian inner product on $\Omega_{\dd}^1(\cA)$. Weak non-degeneracy follows from the fact that the inner product on one-forms 
    is strongly non-degenerate.
%
\end{proof}

As such, a single inner product over a bimodule of one-forms $\Omega_{\dd}^1(\cA)$ yields two separate inner product structures over vector fields, one for the right module $\LA{\cX}(\cA)$ and one for the left module $\RA{\cX}(\cA)$. These are related by the dagger map described above.

\subsection{Derivations and first-order differential calculi}

In this section, we show how noncommutative vector fields seen as duals of differential forms,  are naturally related to derivations on the algebra $\A$, extending the well-known fact that vector fields on a smooth manifold act as derivations on smooth functions.

We fix  a unital $*$-algebra $\A$, an Hermitian first-order differential structure $(\Omega_{\dd}^1(\cA), \bra{\cdot}\ket{\cdot}_\cA, \dagger)$,  and let $\DER(\cA)$ be the set of derivations of $\cA$ into itself.

\begin{proposition}\label{def: def_map_phi}
    Consider the $\C$-linear subspace of $\DER(\cA)$ defined by
    \[
        D_\C := \Big\{ \partial \in \DER(\cA)\ \Big|\ \forall a_i, b_i \in \cA,\ \sum\limits_i a_i \dd b_i = 0 \implies \sum\limits_i a_i \partial b_i = 0  \Big\}.
    \]
    We define a $\C$-linear map $\ph : D_\C \rightarrow \RA{\cX}(\cA)$ by
        \[
        \Pairing{\ph(\partial)}{\sum\limits_i a_i \dd b_i} :=  \sum\limits_i a_i \partial b_i.
    \]
    Then the image of $\ph$ is contained in $\LRA{\cX}(\A)$ and the map $\ph : D_\C \rightarrow \LRA{\cX}(\cA)$ is a $\C$-linear isomorphism.
\end{proposition}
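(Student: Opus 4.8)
The plan is to recognise $D_\C$ as exactly those derivations whose universal factorisation descends to $\Omega^1_{\dd}(\A)$, and then to establish bijectivity by exhibiting an explicit two-sided inverse of $\ph$.

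First I would observe that every $\partial\in\DER(\A)$ is a bimodule derivation into the bimodule $\A$, so by the universal property of $\Omega^1_u(\A)$ it factors uniquely as $\partial=\hat\partial\circ\delta$ for a bimodule map $\hat\partial:\Omega^1_u(\A)\to\A$ with $\hat\partial(\sum_i a_i\delta(b_i))=\sum_i a_i\partial b_i$. The defining condition of $D_\C$ says precisely that $\ker\pi_{\dd}\subseteq\ker\hat\partial$, where $\pi_{\dd}:\Omega^1_u(\A)\to\Omega^1_{\dd}(\A)$ is the (surjective, bimodule) quotient map. Hence for $\partial\in D_\C$ the map $\hat\partial$ descends through $\pi_{\dd}$ to a well-defined map $\ph(\partial):\Omega^1_{\dd}(\A)\to\A$ satisfying $\Pairing{\ph(\partial)}{\sum_i a_i\dd b_i}=\sum_i a_i\partial b_i$, and being the descent of a bimodule map along a surjective bimodule map it is itself a bimodule map, i.e. $\ph(\partial)\in\LRA{\cX}(\A)$. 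This single observation settles well-definedness, the bimodule property, and the claim that the image lies in $\LRA{\cX}(\A)$ rather than only $\RA{\cX}(\A)$. The $\C$-linearity of $\partial\mapsto\ph(\partial)$ is immediate from the formula.

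For bijectivity I would produce the inverse directly. Define $\Theta:\LRA{\cX}(\A)\to\DER(\A)$ by $\Theta(X)(b):=\Pairing{X}{\dd b}$. The key computation is that $\Theta(X)$ is a derivation: using $\dd(bc)=(\dd b)c+b\,\dd c$ together with both the right- and left-linearity of the bimodule map $X$,
\[
\Theta(X)(bc)=\Pairing{X}{(\dd b)c}+\Pairing{X}{b\,\dd c}=\Pairing{X}{\dd b}\,c+b\,\Pairing{X}{\dd c}=\Theta(X)(b)\,c+b\,\Theta(X)(c).
\]
Moreover $\Theta(X)\in D_\C$: if $\sum_i a_i\dd b_i=0$ then left-linearity of $X$ gives $\sum_i a_i\Theta(X)(b_i)=\Pairing{X}{\sum_i a_i\dd b_i}=0$. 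Finally, $\ph$ and $\Theta$ are mutually inverse: $\Theta(\ph(\partial))(b)=\Pairing{\ph(\partial)}{\dd b}=\partial b$ gives $\Theta\circ\ph=\mathrm{id}$, while for $\omega=\sum_i a_i\dd b_i$ left-linearity yields $\Pairing{\ph(\Theta(X))}{\omega}=\sum_i a_i\Theta(X)(b_i)=\Pairing{X}{\omega}$, so $\ph\circ\Theta=\mathrm{id}$, using that every element of $\Omega^1_{\dd}(\A)=\pi_{\dd}(\Omega^1_u(\A))$ has the form $\sum_i a_i\dd b_i$.

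All the computations are routine; the conceptual point to get right, and the only place bilinearity is genuinely used, is that $\Theta(X)=X\circ\dd$ obeys Leibniz precisely because $X$ is a \emph{bimodule} map — right-linearity alone would not let one extract $b$ from $\Pairing{X}{b\,\dd c}$, so a merely right-linear vector field need not define a derivation. This is exactly why the bijection is with $\LRA{\cX}(\A)$ and not $\RA{\cX}(\A)$, and it is the reason the definition of $D_\C$ is tailored to make the descent along $\ker\pi_{\dd}$ well-defined.
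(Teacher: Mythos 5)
Your proof is correct, and it reaches the same destination as the paper's with the same inverse map, but the first half travels a genuinely different route. For well-definedness and bilinearity of $\ph(\partial)$, the paper argues by hand: well-definedness is read off directly from the defining condition of $D_\C$, and bilinearity is established by an explicit Leibniz computation showing $\Pairing{\ph(\partial)}{a\,(\sum_i a_i \dd b_i)\,b} = a\,\Pairing{\ph(\partial)}{\sum_i a_i \dd b_i}\,b$. You instead factor $\partial$ through the universal calculus, $\partial = \hat\partial\circ\delta$ with $\hat\partial:\Omega^1_u(\A)\to\A$ a bimodule map, observe that membership in $D_\C$ is exactly the condition $\ker\pi_{\dd}\subseteq\ker\hat\partial$, and obtain $\ph(\partial)$ as the descent of a bimodule map along the surjective bimodule map $\pi_{\dd}$ --- which packages well-definedness, bilinearity, and the containment of the image in $\LRA{\cX}(\A)$ into one conceptual step. (One small caveat: the paper only states the universal property for $\dag$-bimodule derivations, whereas you invoke the plain bimodule version for an arbitrary $\partial\in\DER(\A)$; that version is standard and easily verified --- $\hat\partial$ is the restriction of $a\ox b\mapsto a\,\partial b$ to $\ker m$ --- but it is not literally the statement quoted in the paper, so it deserves the one-line justification.) For the converse direction both proofs use the identical map $X\mapsto (a\mapsto\Pairing{X}{\dd a})$; here you are actually more thorough than the paper, which asserts without computation that $\partial_X$ is a derivation, while you spell out how right-linearity handles $\Pairing{X}{(\dd b)c}$ and left-linearity handles $\Pairing{X}{b\,\dd c}$, and you verify both compositions $\Theta\circ\ph=\mathrm{id}$ and $\ph\circ\Theta=\mathrm{id}$ explicitly. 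Your closing remark --- that bilinearity of $X$ is precisely what makes $X\circ\dd$ a derivation, and that right-linearity alone would not suffice --- is exactly the structural point underlying the paper's choice of $\LRA{\cX}(\A)$ as the target, so the two arguments illuminate each other: the paper's is shorter and self-contained, yours explains \emph{why} the computation works.
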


\begin{proof} That $\varphi$ is well-defined follows from the definition of $D_\C$.
    We first show that $\ph$ takes its values in $\LRA{\cX}(\cA)$. 
    Indeed, suppose $\partial\in D_\C$. Using that $\dd$ is a derivation we compute, for $a, b, a_i, b_i \in \cA$,
    \[
        a \Big( \sum\limits_i a_i \dd b_i \Big) b = \sum\limits_i a a_i \dd( b_i b) - \sum\limits_i a a_i b_i \dd b.
    \]
    Hence
    \[
        \Pairing{\ph(\partial)}{a \Big( \sum\limits_i a_i \dd b_i \Big) b} = \sum\limits_i a a_i \partial( b_i b) - \sum\limits_i a a_i b_i \partial b = a \Big( \sum\limits_i a_i \partial b_i \Big) b=a\Pairing{\ph(\partial)}{\Big( \sum\limits_i a_i \dd b_i \Big)}b,
    \]
    so $\ph(\partial)$ is $\cA$-bilinear. To show that $\ph:D_\C\to \LRA{\cX}(\A)$ is an isomorphism, we construct the inverse. Let $X\in \LRA{\cX}(\cA)$ be a bilinear vector field. Define a derivation $\partial_X$ on $\cA$ by $\partial_X(a) := \Pairing{X}{\dd a}$, $a \in \cA$. Then, for all $a_i, b_i \in \cA$ such that $\sum_i a_i \dd b_i = 0$, we have by bilinearity of $X$
    \[
        \sum_i a_i \partial_X (b_i) = \Pairing{X}{\sum_i a_i \dd b_i} = 0.
    \]
    So $\partial_X \in D_\C$, 
    and the map $ \LRA{\cX}(\cA)\ni X\mapsto \big(\cA\ni a \mapsto \Pairing{X}{\dd a}\big)$ is the inverse of $\ph$.
\end{proof}

%
%

%
%
In the next section we will identify the key additional assumption which allows us to relate the identification of (real) vector fields as duals of differential forms and the identification with (Hermitian) derivations. We make the following definition.

\begin{definition}
    We say that a bilinear vector field $X \in \LRA{\cX}(\cA)$ is \emph{real} if $X^\dagger = -X$. We say that a derivation $\partial \in \DER(\cA)$ is \emph{Hermitian} if $\partial^\dagger = \partial$, where $\partial^\dagger(a) := \partial(a^*)^*$. We write the sets of real bilinear vector fields and Hermitian derivations as $\LRA{\cX}(\cA)^\dagger$ and $\DER(\cA)^\dagger$ respectively. We let $D := D_\C^\dagger = D_\C \cap \DER(\cA)^\dagger$ be the subset of Hermitian derivations in $D_\C$.
\end{definition}

%
%

\subsection{Centred bimodules of differential forms}

To make contact with the constructions of \cite{AWcurvature} summarised in Definition \ref{defn:metric-calc}, we need to impose further conditions on our modules of vector fields  and the map $\ph$ from derivations on $\cA$ to our module(s). 
In particular we need to check that:
\begin{itemize}
    \item $D$ is a real Lie algebra of derivations, i.e. it is stable under commutators;
    \item The image of $D$ by $\ph$ generates the module $\LA{\cX}(\cA)$ (or symmetrically $\RA{\cX}(\cA)$).
\end{itemize}
By Proposition \ref{def: def_map_phi}, the latter condition, at least for $D_\C$, reduces to checking that $\LRA{\cX}(\cA)$ generates $\LA{\cX}(\cA)$ as a right module (or equivalently $\RA{\cX}(\cA)$ as a left module, since the dagger map intertwines the two module structures). Using Proposition \ref{prop: central forms and bivector fields}, this is in turn the same as requiring that the central forms $\cZ(\Omega^1_\dee(\cA))$ generate $\Omega^1_\dee(\cA)$ (as a right, left or bimodule). Thus the second condition above is equivalent to the bimodule of differential forms being centred, as in Definition \ref{def:hom-centre}. We will see in Theorem \ref{thm: Lie algebra} that when $\Omega_\dee^1(\cA)$ is a centred bimodule and some mild assumptions hold, $D$ is moreover a real Lie subalgebra of $\DER(\cA)^\dag$ and plays the role of $\fg$ in \cite{AWcurvature}.

First we need some results on centred bimodules. Recall that a braiding on a $\dag$-$\cA$-bimodule $\cM$ is an invertible map $\sigma : \cM \ox_\cA \cM \rightarrow \cM \ox_\cA \cM$ such that $\sigma^{-1} \circ \dag = \dag \circ \sigma$. By Proposition \ref{prop: braiding} of the Appendix, on a centred $\dagger$-bimodule $\cM$ over $\cA$, there exists a unique braiding $\sigma^\mathrm{can}$ which satisfies $\sigma^{\mathrm{can}}(x \otimes y) = y \otimes x$ whenever $x$ or $y \in \cZ(\cM)$. This braiding is an involution.



\begin{remark}
    The canonical braiding $\sigma^\mathrm{can}$ on $\cM$ yields an idempotent $\Psi := \frac{1}{2}(1 + \sigma^\mathrm{can})$ which, for differential forms on a manifold, is the junk projection onto symmetric tensors \cite[Section 6.5]{MRLC}.
\end{remark}

\begin{definition}
\label{defn:metric-bilinear}
	We define the bilinear quantum metric to be the map $g : T^2_\dee(\cA) \rightarrow \cA$ given on the simple tensor $\omega\ox\eta\in T^2_\dee(\A)$ by $g(\omega \otimes \eta) := -\pairing{\omega^\dagger}{\eta}_\cA$. Observe that
	$g(a\omega \otimes \eta b)=ag(\omega \otimes \eta)b$ for $a,b\in\A$.
\end{definition}

\begin{remark}
    The bilinear quantum metric is well-defined on balanced tensors since for $a \in \cA$ we have $g(\omega a \ox \eta) = -\pairing{a^* \omega^\dag}{\eta} = -\pairing{\omega^\dag}{a \eta} = g(\omega \ox a \eta)$.
\end{remark}

\begin{remark}
    We can reformulate the musical isomorphisms in term of the bilinear quantum metric. For $\omega,\eta\in \Omega^1_\dee(\A)$ we have $\Pairing{\omega^{\RA{\sharp}}}{\eta} = -g(\omega \otimes \eta)$ and $\Pairing{\eta}{\omega^{\LA{\sharp}}} = -g(\eta \otimes \omega)$.
\end{remark}

The following is the core definition of this paper: it is the setting in which we will show that the constructions of \cite{AWcurvature}, \cite{Gosw&alCentredBimod}, \cite{MRLC} all make sense and coincide.

\begin{definition}
\label{def:canonicalcentred}
    We say that the Hermitian first-order differential structure $(\Omega_{\dd}^1(\cA), \bra{\cdot}\ket{\cdot}_\cA, \dagger)$ is a \emph{centred Hermitian differential calculus} if the following holds:
    \begin{enumerate}
        \item The bimodule $\Omega^1_\dee(\cA)$ is centred over $\cA$, i.e. is generated as an $\cA$-bimodule by its centre;
        
        \item The bilinear quantum metric on $\Omega^1_\dee(\cA)$ is invariant under the canonical braiding: $g \circ \sigma^\mathrm{can} = g$;
        
        \item The submodule $JT^2_\dee$ of junk forms satisfies $JT^2_\dee \subset \IM \Psi$ where $\Psi := \frac{1}{2}(1 + \sigma^\mathrm{can})$.
    \end{enumerate}
    We denote such a calculus by $(\Omega_{\dd}^1(\cA), \dagger, \bra{\cdot}\ket{\cdot},\sigma^\mathrm{can})$.
\end{definition}

\begin{remark}
    We will often, when there is no ambiguity, drop the superscript on the braiding $\sigma^\mathrm{can}$.
\end{remark}

\begin{proposition}
\label{prop: psi-proj}
    A centred Hermitian differential calculus $(\Omega_{\dd}^1(\cA), \dagger, \bra{\cdot}\ket{\cdot},\sigma^\mathrm{can})$ is an Hermitian second order differential structure in the sense of Definition \ref{ass:fgp-metric-junk}.
\end{proposition}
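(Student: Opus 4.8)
The plan is to verify, for the data $(\Omega^1_\dee(\cA),\dag,\Psi)$ with $\Psi:=\tfrac12(1+\sigma^{\mathrm{can}})$, each clause of Definition \ref{ass:fgp-metric-junk}, given that $(\Omega^1_\dee(\cA),\dag,\pairing{\cdot}{\cdot})$ is already an Hermitian first-order differential structure by hypothesis. Three of the four requirements are immediate from the properties of $\sigma:=\sigma^{\mathrm{can}}$ recorded before the statement (it is a braiding and an involution). Since $\sigma^2=1$ we get $\Psi^2=\tfrac14(1+2\sigma+\sigma^2)=\tfrac14(2+2\sigma)=\Psi$, so $\Psi$ is idempotent. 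The braiding relation $\sigma^{-1}\circ\dag=\dag\circ\sigma$ together with $\sigma^{-1}=\sigma$ gives $\sigma\circ\dag=\dag\circ\sigma$, whence $\Psi\circ\dag=\dag\circ\Psi$. Finally $JT^2_\dee\subset\IM\Psi$ is precisely clause 3 of Definition \ref{def:canonicalcentred}. Thus the only substantial point is that $\Psi$ is self-adjoint, i.e.\ $\Psi=\Psi^*$, equivalently $\sigma=\sigma^*$, for the right inner product on $T^2_\dee$.

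I would prove self-adjointness of $\sigma$ by reducing to central generators. Because $\Omega^1_\dee(\cA)$ is centred and $\sigma$ is a bimodule map acting as the flip on central tensors, every element of $T^2_\dee$ is a right-$\cA$-combination of simple tensors $\omega\ox\eta$ with $\omega,\eta\in\cZ(\Omega^1_\dee(\cA))$. Using sesquilinearity of the inner product and the bimodule-linearity of $\sigma$, it then suffices to check
\[
\pairing{\sigma(\omega\ox\eta)}{\mu\ox\nu}=\pairing{\omega\ox\eta}{\sigma(\mu\ox\nu)},\qquad\text{i.e.}\qquad \pairing{\eta\ox\omega}{\mu\ox\nu}=\pairing{\omega\ox\eta}{\nu\ox\mu},
\]
for central $\omega,\eta,\mu,\nu$. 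The key intermediate observation is that on central tensors the $T^2$-inner product factorises: expanding $\pairing{\omega\ox\eta}{\mu\ox\nu}=\pairing{\eta}{\pairing{\omega}{\mu}\nu}$ and moving the scalar $\pairing{\omega}{\mu}$ past $\eta$, which is legitimate because $\eta$ is central, yields $\pairing{\omega\ox\eta}{\mu\ox\nu}=\pairing{\omega}{\mu}\,\pairing{\eta}{\nu}$. Applying this to both sides of the desired identity reduces it to $pq=qp$, where $p:=\pairing{\eta}{\mu}$ and $q:=\pairing{\omega}{\nu}$.

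The crux is therefore that $p$ and $q$ are central in $\cA$, and this is exactly where centredness enters through Lemma \ref{lem:bilinear-central}. Since $\dag$ preserves $\cZ(\Omega^1_\dee(\cA))$ (a quick consequence of the $\dag$-bimodule axioms), the element $\eta^\dag$ is central, so $(\eta^\dag)^{\sharpr}$ is a \emph{bilinear} vector field by Proposition \ref{prop: central forms and bivector fields}; writing $p=\pairing{\eta}{\mu}=\Pairing{(\eta^\dag)^{\sharpr}}{\mu}$ and invoking Lemma \ref{lem:bilinear-central} (the pairing of a bilinear vector field with a central one-form lands in $\cZ(\cA)$) shows $p\in\cZ(\cA)$, and symmetrically $q\in\cZ(\cA)$. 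Hence $pq=qp$ and $\sigma=\sigma^*$. The main obstacle is organising this reduction correctly: the content is not the algebra of $\Psi$ but the recognition that centredness forces the scalars $\pairing{\eta}{\mu}$ arising from the $T^2$-inner product into the centre of $\cA$, which is precisely what makes the flip self-adjoint and which fails for non-centred bimodules.
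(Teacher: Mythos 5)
Your proposal is correct and takes essentially the same approach as the paper: both observe that idempotency, $\dag$-compatibility and the junk condition are immediate from $\sigma^{\mathrm{can}}$ being an involutive braiding and clause 3 of Definition \ref{def:canonicalcentred}, and both prove self-adjointness of $\Psi$ by reducing, via bimodule-linearity of $\sigma^{\mathrm{can}}$ and centredness, to simple tensors of central one-forms and then exploiting centrality in the inner-product manipulations. The only cosmetic difference is that you factorise both sides as products of scalars and invoke Lemma \ref{lem:bilinear-central} (via Proposition \ref{prop: central forms and bivector fields}) to commute $\pairing{\eta}{\mu}_\A$ and $\pairing{\omega}{\nu}_\A$, whereas the paper arranges its chain of equalities so that both sides reach the common expression $\pairing{\omega}{\tau}_\A\,\pairing{\rho}{\eta}_\A$ directly, never needing to commute two elements of $\A$.
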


\begin{proof}
    The only thing to prove is that $\Psi$ is a projection, and since $\sigma^2={\rm Id}$ we immediately have $\Psi^2 = \Psi$. For self-adjointness it suffices to prove that $\sigma$ is self-adjoint on $T^2_\dee$. For this, it suffices to work with simple tensors $\omega\ox\rho,\eta\ox\tau$ of central one-forms. For these, repeated use of centrality yields
    \begin{align*}
    \pairing{\sigma(\omega\ox\rho)}{\eta\ox\tau}_\A
    &=\pairing{\rho\ox\omega}{\eta\ox\tau}_\A
    =\pairing{\omega}{\pairing{\rho}{\eta}_\A\,\tau}_\A
    =\pairing{\omega}{\tau}_\A\,\pairing{\rho}{\eta}_\A\\
    &=\pairing{\rho\pairing{\tau}{\omega}_\A}{\eta}_\A
    =\pairing{\pairing{\tau}{\omega}_\A\,\rho}{\eta}_\A
    =\pairing{\rho}{\pairing{\omega}{\tau}_\A\,\eta}_\A\\
    &=\pairing{\omega\ox\rho}{\tau\ox\eta}_\A
    =\pairing{\omega\ox\rho}{\sigma(\eta\ox\tau)}_\A.\qedhere
    \end{align*}
\end{proof}

\begin{proposition}
\label{cor:centred-dual}
Let  $(\Omega_{\dd}^1(\cA), \bra{\cdot}\ket{\cdot}_\cA, \dagger)$ be a centred Hermitian differential calculus. Then the musical isomorphisms $\sharpr, \sharpl$ coincide on the central one-forms $\mathcal{Z}(\Omega^1_\dee(\A))$. 
\end{proposition}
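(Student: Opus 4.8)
The plan is to reduce the claimed equality of functionals to the braiding-invariance of the bilinear metric $g$. First I would observe that for a central one-form $\omega \in \cZ(\Omega^1_\dee(\A))$, Proposition \ref{prop: central forms and bivector fields} guarantees that both $\omega^{\sharpr}$ and $\omega^{\sharpl}$ are bilinear vector fields, i.e.\ lie in $\LRA{\cX}(\A)$. This is what makes the comparison meaningful: a priori $\sharpr$ takes values in $\RA{\cX}(\A)$ and $\sharpl$ in $\LA{\cX}(\A)$, but on the centre both images are genuine bimodule maps $\Omega^1_\dee(\A) \to \A$, so they may be compared as $\C$-linear functionals by evaluating on an arbitrary one-form $\eta$.

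Next I would rewrite both functionals in terms of $g$. Using the reformulation of the musical maps recorded in the remark following Definition \ref{defn:metric-bilinear}, we have $\Pairing{\omega^{\sharpr}}{\eta} = -g(\omega \ox \eta)$ and $\Pairing{\eta}{\omega^{\sharpl}} = -g(\eta \ox \omega)$ for all $\eta \in \Omega^1_\dee(\A)$. Thus the desired identity $\omega^{\sharpr} = \omega^{\sharpl}$ is equivalent to the symmetry $g(\omega \ox \eta) = g(\eta \ox \omega)$ for all $\eta$.

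The key step is then to invoke the two defining features of a centred Hermitian calculus. Since $\omega$ is central, the canonical braiding satisfies $\sigma^{\mathrm{can}}(\omega \ox \eta) = \eta \ox \omega$. Combined with braiding-invariance of the metric, condition 2 of Definition \ref{def:canonicalcentred}, namely $g \circ \sigma^{\mathrm{can}} = g$, this yields
\[
g(\eta \ox \omega) = g(\sigma^{\mathrm{can}}(\omega \ox \eta)) = g(\omega \ox \eta),
\]
which is exactly the required symmetry. Hence $\Pairing{\eta}{\omega^{\sharpl}} = \Pairing{\omega^{\sharpr}}{\eta}$ for every $\eta$, so $\omega^{\sharpr} = \omega^{\sharpl}$ as elements of $\LRA{\cX}(\A)$.

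I do not expect any serious obstacle here: the content of the proposition is precisely that the asymmetry highlighted in Remark \ref{remarkable} is repaired by braiding-invariance of $g$ on central tensors. The only point requiring care is the first one, namely checking that the two outputs live in a common space so that ``coincide'' is well-posed, and this is exactly what Proposition \ref{prop: central forms and bivector fields} supplies.
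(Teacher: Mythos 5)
Your proposal is correct and follows essentially the same route as the paper's own proof: rewrite both pairings via the bilinear metric as $\Pairing{\omega^{\sharpr}}{\eta}=-g(\omega\ox\eta)$ and $\Pairing{\eta}{\omega^{\sharpl}}=-g(\eta\ox\omega)$, then invoke $g\circ\sigma^{\mathrm{can}}=g$ together with the flip property of $\sigma^{\mathrm{can}}$ on central elements. Your additional preliminary remark that both images lie in $\LRA{\cX}(\A)$ (via Proposition \ref{prop: central forms and bivector fields}) is a harmless clarification of why the comparison is well-posed, which the paper leaves implicit.
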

\begin{proof}
We recall from Remark \ref{remarkable} that if $\omega$ is a central one-form and $\eta$ is any one-form, the pairings defining the musical isomorphisms are 
\[
\Pairing{\omega^{\sharpr}}{\eta}=\pairing{\omega^\dag}{\eta}_\A=-g(\omega\ox\eta)\quad\mbox{and}\quad\Pairing{\eta}{\omega^{\sharpl}}=\pairing{\eta^\dag}{\omega}_\A=-g(\eta\ox\omega).
\] 
Since $g\circ \sigma=g$, the pairings coincide and so the musical isomorphisms $\sharpr,\sharpl$ coincide on $\mathcal{Z}(\Omega^1_\dee(\A))$. 
\end{proof}

\begin{proposition}
\label{prop:real-der}
The complex linear isomorphism $\ph:D_\C\to\LRA{\cX}(\A)$ restricts to a real linear isomorphism $\ph:D\to\LRA{\cX}(\A)^\dag$.
\end{proposition}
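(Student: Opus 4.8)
The plan is to exploit the $\C$-linear isomorphism $\ph\colon D_\C\to\LRA{\cX}(\A)$ of Proposition \ref{def: def_map_phi} and reduce everything to a single compatibility between the two $\dagger$-involutions, the one on derivations ($\partial^\dagger(a)=\partial(a^*)^*$) and the one on bilinear vector fields ($\Pairing{X^\dagger}{\omega}:=\Pairing{\omega^\dagger}{X}^{*}$). Both involutions are antilinear, so $D=D_\C\cap\DER(\A)^\dagger$ and $\LRA{\cX}(\A)^\dagger$ are the \emph{real} subspaces cut out by the respective fixed-point/reality conditions. Since $\ph$ is already a bijective $\C$-linear (hence $\R$-linear) map, it suffices to prove that $\ph$ sends $D$ exactly onto $\LRA{\cX}(\A)^\dagger$; injectivity and real-linearity of the restriction are then inherited automatically.

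The heart of the matter is one computation. For $\partial\in D_\C$ write $X=\ph(\partial)$, so that $\Pairing{X}{\dd a}=\partial(a)$ for all $a\in\A$. First I would record that $X^\dagger$ is again \emph{bilinear}: combining the defining relation $\Pairing{X^\dagger}{\omega}=\Pairing{\omega^\dagger}{X}^{*}$ with the $\dagger$-bimodule axiom $(a\omega b)^\dagger=b^{*}\omega^\dagger a^{*}$ and the fact that $X$ is a bimodule map, a one-line check yields $\Pairing{X^\dagger}{a\omega b}=a\Pairing{X^\dagger}{\omega}b$, so $X^\dagger\in\LRA{\cX}(\A)$ and the comparison $X^\dagger=-X$ makes sense there. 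Then, using the $\dagger$-bimodule derivation identity $(\dd a)^\dagger=-\dd(a^{*})$, I would evaluate on generators:
\[
\Pairing{X^\dagger}{\dd a}=\Pairing{(\dd a)^\dagger}{X}^{*}=-\Pairing{\dd(a^{*})}{X}^{*}=-\partial(a^{*})^{*}=-\partial^\dagger(a).
\]
As $X^\dagger$ and $X$ are both bimodule maps and the forms $\dd a$ generate $\Omega^1_\dee(\A)$ as a bimodule, this shows $X^\dagger=-X$ if and only if $\partial^\dagger(a)=\partial(a)$ for all $a$, i.e. if and only if $\partial$ is Hermitian.

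With this equivalence both inclusions are immediate. If $\partial\in D$ then $\partial^\dagger=\partial$, so $X^\dagger=-X$ and $X\in\LRA{\cX}(\A)^\dagger$, giving $\ph(D)\subseteq\LRA{\cX}(\A)^\dagger$. Conversely, given a real $X\in\LRA{\cX}(\A)^\dagger$, surjectivity of $\ph$ provides a (unique) $\partial\in D_\C$ with $\ph(\partial)=X$, and the displayed identity forces $\partial^\dagger=\partial$, so $\partial\in D$; hence $\LRA{\cX}(\A)^\dagger\subseteq\ph(D)$. Together with injectivity of $\ph$ this exhibits $\ph\colon D\to\LRA{\cX}(\A)^\dagger$ as a real-linear isomorphism.

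The only genuinely delicate points are bookkeeping: tracking the antilinearity of both $\dagger$-operations so that $D$ and $\LRA{\cX}(\A)^\dagger$ are honestly real vector spaces, and confirming that $X^\dagger$ stays inside $\LRA{\cX}(\A)$. It is worth stressing that this route avoids the one potential snag — one might be tempted to prove the intertwining relation $\ph(\partial^\dagger)=-\ph(\partial)^\dagger$ directly, but that presupposes $\partial^\dagger\in D_\C$, whose verification from the definition of $D_\C$ (phrased via left-hand expressions $\sum a_i\dd b_i$) is comparatively awkward. Evaluating on generators as above instead delivers $\partial^\dagger=\partial$ outright, so Hermiticity is obtained without ever checking stability of $D_\C$ under $\dagger$.
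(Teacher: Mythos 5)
Your proof is correct. It reaches the same conclusion as the paper's proof but organizes the work differently, and the difference is worth recording. The paper proves the forward direction by a direct computation on a \emph{general} one-form $\sum_i a_i\dd b_i$: it applies the dagger pairing relation \eqref{eq:pair-dag}, then uses the Leibniz rule for both $\dd$ and $\partial$ to telescope away the coefficients $a_i^*$, and finally invokes $\partial(b_i^*)^*=\partial(b_i)$ to conclude $\Pairing{\sum_i a_i\dd b_i}{\ph(\partial)^\dag}=-\Pairing{\sum_i a_i\dd b_i}{\ph(\partial)}$; the converse is then a one-line evaluation on exact forms, just as in your argument. You instead prove a single biconditional ($X^\dag=-X$ iff $\partial^\dag=\partial$) by evaluating \emph{only} on the generators $\dd a$, using $(\dd a)^\dag=-\dd(a^*)$, and then extend by module linearity. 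This buys a shorter computation — no Leibniz telescoping at all — but the extension step requires the auxiliary fact that $X^\dag$ is again bilinear (at least left-linear), which you correctly identify and verify; the paper's computation on general forms never needs this lemma, since equality with the bilinear map $-\ph(\partial)$ delivers bilinearity of $\ph(\partial)^\dag$ a posteriori. Your closing observation — that one should not try to prove $\ph(\partial^\dag)=-\ph(\partial)^\dag$ since membership $\partial^\dag\in D_\C$ is not obvious — is a genuine pitfall, and both your route and the paper's avoid it in the same way, by never needing $\partial^\dag$ to lie in $D_\C$ a priori.
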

\begin{proof}
    Let $\partial\in D=D_\C\cap \DER(\A)^\dag$. Using \eqref{eq:pair-dag} we have
    \[
    \begin{split}
        \Pairing{\sum\limits_i a_i \dd b_i}{\ph(\partial)^\dagger} & = \Pairing{\ph(\partial)}{-\sum\limits_i \dd(b_i^*) a_i^*}^{*} \\
        & = \Big( -\Pairing{\ph(\partial)}{\sum\limits_i \dd(b_i^* a_i^*)} + \Pairing{\ph(\partial)}{\sum\limits_i b_i^* \dd(a_i^*)} \Big)^* \\
        & = \Big( -\sum\limits_i \partial(b_i^* a_i^*) + \sum\limits_i b_i^* \partial(a_i^*) \Big)^* \\
        & = - \sum\limits_i a_i \partial(b_i^*)^* = -\Pairing{\ph(\partial)}{\sum\limits_i a_i \dd b_i}= -\Pairing{\sum\limits_i a_i \dd b_i}{\ph(\partial)},
    \end{split}
    \]
    where we used the Leibniz rule for both $\dd$ and $\partial$, $\partial(b_i^*)^* = \partial(b_i)$, and the bilinearity of $\ph(\partial)$ at the last equality.     
     Hence we deduce that $\ph(\partial)^\dagger = -\ph(\partial)$. Conversely, if $\ph(\partial)$ is real then $\partial^\dag(a) = \partial(a^*)^* = \Pairing{\ph(\partial)}{\dee(a^*)}^* = -\Pairing{\ph(\partial)^\dag}{\dee a} = \partial(a)$.
\end{proof}

\begin{lemma}\label{lemma: antisym_vector_fields}
    Suppose $(\Omega_{\dd}^1(\cA), \dagger, \Psi, \bra{\cdot}\ket{\cdot},\sigma^\mathrm{can})$ is a centred Hermitian differential calculus. Let $X, Y$ be two bilinear vector fields. For any two-tensor $\alpha \in T^2_{\dd}(\cA)$,
    \[
        \product{X}{\alpha}{Y} - \product{Y}{\alpha}{X} = 2 \product{X}{(1 - \Psi) \alpha}{Y}.
    \]
\end{lemma}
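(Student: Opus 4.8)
The plan is to rephrase the identity in terms of the canonical braiding and then exploit bimodularity to reduce to central generators. Since $\Psi = \tfrac{1}{2}(1+\sigma)$ we have $2(1-\Psi) = 1-\sigma$, so the right-hand side equals $\product{X}{(1-\sigma)\alpha}{Y} = \product{X}{\alpha}{Y} - \product{X}{\sigma\alpha}{Y}$. Subtracting the common term $\product{X}{\alpha}{Y}$, the assertion becomes equivalent to the single identity
\[
\product{Y}{\alpha}{X} = \product{X}{\sigma\alpha}{Y}, \qquad \alpha \in T^2_\dee(\cA),
\]
which is what I would set out to prove.

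First I would note that both sides define $\A$-bimodule maps $T^2_\dee(\cA) \to \A$. Using Definition \ref{def: product vector fields} and the bilinearity of $X$ and $Y$, one checks directly that $\product{Y}{a\alpha b}{X} = a\product{Y}{\alpha}{X}b$; and since $\sigma = \sigma^\mathrm{can}$ is a bimodule map (Proposition \ref{prop: braiding}), the assignment $\alpha \mapsto \product{X}{\sigma\alpha}{Y}$ is likewise $\A$-bilinear. To prove that two $\A$-bimodule maps agree it is then enough to test them on a set of bimodule generators.

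The generators are supplied by centredness. Because $\Omega^1_\dee(\cA)$ is generated as a bimodule by its centre, $T^2_\dee(\cA)$ is generated as a bimodule by the simple tensors $z \ox z'$ with $z, z' \in \cZ(\Omega^1_\dee(\cA))$. On such a tensor the defining property of the canonical braiding gives $\sigma(z\ox z') = z'\ox z$, whence
\[
\product{X}{\sigma(z\ox z')}{Y} = \Pairing{X}{z'}\,\Pairing{z}{Y} \quad\text{and}\quad \product{Y}{z\ox z'}{X} = \Pairing{Y}{z}\,\Pairing{z'}{X}.
\]
Here is the crux: by Lemma \ref{lem:bilinear-central} the pairing of a bilinear vector field with a central one-form lands in $\cZ(\A)$, so the scalars $\Pairing{X}{z'} = \Pairing{z'}{X}$ and $\Pairing{z}{Y} = \Pairing{Y}{z}$ are central and therefore commute. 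The two displayed expressions thus agree, and by bimodularity the identity extends to all $\alpha \in T^2_\dee(\cA)$.

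The step demanding the most care is the structural reduction rather than any computation: one must verify that both sides genuinely are $\A$-bimodule maps — which hinges on $\sigma$ being a bimodule map and on $X,Y$ being \emph{bilinear}, so that the scalars produced by the pairings can be passed freely across the tensor — and that centredness of $\Omega^1_\dee(\cA)$ indeed forces $T^2_\dee(\cA)$ to be bimodule-generated by the central tensors $z\ox z'$. Once these facts are secured, the commutativity of the two central scalars $\Pairing{X}{z'}$ and $\Pairing{z}{Y}$, guaranteed by Lemma \ref{lem:bilinear-central}, makes the conclusion immediate.
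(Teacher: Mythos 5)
Your proposal is correct and follows essentially the same route as the paper's proof: both arguments reduce by bimodularity to simple tensors $z\ox z'$ of central one-forms (using centredness of $\Omega^1_\dee(\cA)$), invoke Lemma \ref{lem:bilinear-central} so that the pairings of bilinear vector fields with central forms commute in $\A$, exploit that $\sigma^{\mathrm{can}}$ is the flip on such tensors, and extend by $\A$-bilinearity. The only difference is cosmetic: you cancel the common term $\product{X}{\alpha}{Y}$ and verify the equivalent identity $\product{Y}{\alpha}{X} = \product{X}{\sigma\alpha}{Y}$, whereas the paper manipulates the difference directly into the form $\Pairing{X\cdot Y}{(1-\sigma)(\eta\ox\rho)}$.
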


\begin{proof}
    Since $\Omega_{\dd}^1(\cA)$ is centred, $T^2_{\dd}(\cA)$ is generated by elements of the form $\eta \otimes \rho$ with $\eta$ and $\rho$ central one-forms. Then  the definition of the pairing  and the centrality implied by Lemma \ref{lem:bilinear-central} yield
    \begin{align*}
        \product{X}{\eta \otimes \rho}{Y} - \product{Y}{\eta \otimes \rho}{X} &= 
        \Pairing{X}{\eta}\Pairing{\rho}{Y}-\Pairing{Y}{\eta}\Pairing{\rho}{X}\\
        &= \Pairing{X}{\eta}\Pairing{\rho}{Y}-\Pairing{X}{\rho}\Pairing{\eta}{Y}\\
        &=\Pairing{X\cdot Y}{\eta \otimes \rho - \rho \otimes \eta}.
    \end{align*}
    Since $\sigma$ is the flip on $\eta \otimes \rho$, we have
    \[
        \eta \otimes \rho - \rho \otimes \eta = (1 - \sigma) \eta \otimes \rho = 2 (1 - \Psi) \eta \otimes \rho.
    \]
    The general result then follows by $\cA$-bilinearity of $\Psi$.
\end{proof}

\begin{theorem}
\label{thm: Lie algebra}
    Let $(\Omega_{\dd}^1(\cA), \dagger, \Psi, \bra{\cdot}\ket{\cdot},\sigma^\mathrm{can})$ be a centred Hermitian differential calculus. Then $D_\C$ is a complex Lie algebra of derivations, i.e. $D_\C$ is stable under the commutator of derivations. Hence $D$ is a real Lie subalgebra of $\DER(\cA)^\dag$.
\end{theorem}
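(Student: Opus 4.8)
The plan is to verify the single defining closure property of $D_\C$: given $\partial_1,\partial_2\in D_\C$ and any relation $\sum_i a_i\,\dd b_i=0$ in $\Omega^1_{\dd}(\cA)$, I must show that $\sum_i a_i\,[\partial_1,\partial_2](b_i)=0$; the Hermitian statement will then follow formally. First I would exploit that $\partial_1,\partial_2$ each lie in $D_\C$, so the same relation forces $\sum_i a_i\,\partial_1(b_i)=0$ and $\sum_i a_i\,\partial_2(b_i)=0$ as identities in $\cA$. Applying the derivation $\partial_1$ to the second identity and $\partial_2$ to the first, and using the ordinary Leibniz rule in $\cA$, gives $\sum_i a_i\,\partial_1\partial_2(b_i)=-\sum_i \partial_1(a_i)\partial_2(b_i)$ and $\sum_i a_i\,\partial_2\partial_1(b_i)=-\sum_i \partial_2(a_i)\partial_1(b_i)$. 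Subtracting, the target quantity becomes a purely first-order expression,
\[
\sum_i a_i\,[\partial_1,\partial_2](b_i)=-\sum_i\big(\partial_1(a_i)\partial_2(b_i)-\partial_2(a_i)\partial_1(b_i)\big).
\]

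Next I would package these cross terms using the bilinear vector fields $X:=\ph(\partial_1)$ and $Y:=\ph(\partial_2)$, which lie in $\LRA{\cX}(\cA)$ by Proposition \ref{def: def_map_phi}. Since $\Pairing{X}{\dd a}=\partial_1(a)$ and $\Pairing{\dd b}{Y}=\partial_2(b)$, Definition \ref{def: product vector fields} yields $\product{X}{\dd a_i\ox\dd b_i}{Y}=\partial_1(a_i)\partial_2(b_i)$ and $\product{Y}{\dd a_i\ox\dd b_i}{X}=\partial_2(a_i)\partial_1(b_i)$. Writing $\alpha:=\sum_i \dd a_i\ox\dd b_i\in T^2_{\dd}(\cA)$, the previous display reads $\sum_i a_i\,[\partial_1,\partial_2](b_i)=-\big(\product{X}{\alpha}{Y}-\product{Y}{\alpha}{X}\big)$, and Lemma \ref{lemma: antisym_vector_fields} rewrites the antisymmetrised difference as $-2\,\product{X}{(1-\Psi)\alpha}{Y}$.

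It then remains to identify $(1-\Psi)\alpha$ with the exterior derivative of the relation. Using the Leibniz rule for the exterior derivative $\d$ together with $\d(\dd b_i)=0$ and $\dd(1)=0$, each summand satisfies $\d(a_i\,\dd b_i)=(1-\Psi)(\dd a_i\ox\dd b_i)$, so $(1-\Psi)\alpha=\d\big(\sum_i a_i\,\dd b_i\big)=\d(0)=0$. Hence $\product{X}{(1-\Psi)\alpha}{Y}=0$ and $\sum_i a_i\,[\partial_1,\partial_2](b_i)=0$, establishing $[\partial_1,\partial_2]\in D_\C$. Finally, since the $\dag$-adjoint satisfies $(\partial\circ\partial')^\dag=\partial^\dag\circ\partial'^\dag$ and hence $[\partial_1,\partial_2]^\dag=[\partial_1^\dag,\partial_2^\dag]$, the commutator of two Hermitian derivations is Hermitian; thus $\DER(\cA)^\dag$ is a real Lie algebra and $D=D_\C\cap\DER(\cA)^\dag$ is a real Lie subalgebra.

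The main obstacle is conceptual rather than computational: one must recognise that the obstruction to the vanishing of $\sum_i a_i\,[\partial_1,\partial_2](b_i)$ is exactly the antisymmetric part of $\sum_i \dd a_i\ox\dd b_i$, which, after the canonical flip, is its $(1-\Psi)$-component, and that this component is precisely $\d$ applied to the vanishing one-form $\sum_i a_i\,\dd b_i$. Centredness enters only through Lemma \ref{lemma: antisym_vector_fields}, which is what allows the antisymmetrised pairing to be rewritten via $1-\Psi$; without it the cross terms would not collapse onto the junk projection and the argument would break down, consistent with the fact that this closure is special to the centred setting.
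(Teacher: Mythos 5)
Your proof is correct and follows essentially the same route as the paper's: both reduce $\sum_i a_i[\partial_1,\partial_2](b_i)$ to the antisymmetrised cross-terms, rewrite them via the product pairing of $\ph(\partial_1),\ph(\partial_2)$ with $\sum_i \dd a_i\ox\dd b_i$, apply Lemma \ref{lemma: antisym_vector_fields}, and identify $(1-\Psi)\bigl(\sum_i \dd a_i\ox\dd b_i\bigr)$ with $\dee_\Psi\bigl(\sum_i a_i\dd b_i\bigr)=0$. The only differences are cosmetic (you apply the derivations to the vanishing identities before expanding, and invoke the Leibniz rule for $\dee_\Psi$ where the paper uses its definition directly), plus a slightly more explicit treatment of the Hermitian closure at the end.
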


\begin{proof}
	Let $\partial_1, \partial_2 \in D_\C$. We need to show that their commutator $[\partial_1, \partial_2]$ is again in $D_\C$, i.e. that $\sum_i a_i [\partial_1, \partial_2] (b_i) = 0$ whenever $a_i, b_i \in \cA$ satisfy $\sum_i a_i \dd b_i = 0$. For such $a_i, b_i$,
	\begin{alignat*}{2}
		\sum\limits_i a_i [\partial_1, \partial_2](b_i) &= && \sum\limits_i a_i \partial_1 \partial_2 b_i - a_i \partial_2 \partial_1 b_i \\
		&= && \sum\limits_i \partial_1( a_i \partial_2 b_i ) - \partial_2( a_i \partial_1 b_i ) - (\partial_1 a_i) (\partial_2 b_i) + (\partial_2 a_i) (\partial_1 b_i) \\
		&= && \sum\limits_i \partial_1( a_i \partial_2 b_i ) - \partial_2( a_i \partial_1 b_i ) \\
		& && - \big[ \product{\ph(\partial_1)}{\dd a_i \otimes \dd b_i}{\ph(\partial_2)} - \product{\ph(\partial_2)}{\dd a_i \otimes \dd b_i}{\ph(\partial_1)} \big].
	\end{alignat*}
	Now, since $\partial_1, \partial_2 \in D_\C$ and $\sum_i a_i \dd b_i = 0$, the first two terms vanish. We are left only with the last term, which by Lemma \ref{lemma: antisym_vector_fields} reduces to
	\[
		\sum\limits_i a_i [\partial_1, \partial_2](b_i) 
		= -2 \sum\limits_i \product{\ph(\partial_1)}{(1-\Psi)(\dee a_i\ox \dee b_i)}{\ph(\partial_2)}
		= -2 \product{\ph(\partial_1)}{\dd_\Psi( \sum\limits_i a_i \dd b_i )}{\ph(\partial_2)} = 0.
	\]
	Hence $[\partial_1,\partial_2]\in D_\C$, and $D_\C$ is a complex Lie subalgebra of $\DER(\cA)$. By restricting to Hermitian derivations we also get that $D$ is a real Lie subalgebra of $\DER(\cA)^\dag$.
\end{proof}

Proposition \ref{prop: psi-proj} and Theorem \ref{thm: Lie algebra} show that centred Hermitian differential calculi are a natural class of noncommutative calculi where the constructions of \cite{MRLC}, \cite{Gosw&alCentredBimod} and \cite{AWcurvature} can be compared. 
We also see that, despite these constructions being well-defined for first-order calculi (of forms or vector fields), independently of second-order phenomena, we need the second order information encoded in $\Psi$ for them to be dual. As we shall show subsequently, the second-order structure also appears when extending the comparison to  Levi-Civita connections.

In order to complete our comparison at the level of differential calculi, it must now be shown that  dualising a centred Hermitian structure satisfies all the conditions of Definition \ref{defn:metric-calc}, making the dual a real metric calculus. 
%
%
%
%
%
%
By Proposition \ref{cor:centred-dual}, we may and shall
omit arrows on the musical isomorphisms when working with bilinear vector fields or central forms on centred Hermitian differential calculi.

\begin{corollary}\label{cor: symmetry inner product}
    Suppose $(\Omega_{\dd}^1(\cA), \dagger, \bra{\cdot}\ket{\cdot},\sigma^\mathrm{can})$ is a centred Hermitian differential calculus. Let $X, Y \in \LRA{\cX}(\cA)$. The following hold:
    \begin{enumerate}
    \itemsep-3pt
        \item $( X^\flat )^\dagger = ( X^\dagger )^\flat$;
        \item $\pairing{X}{Y}_\cA = \pairing{Y^\dagger}{X^\dagger}_\cA$;
        \item If $X, Y$ are real vector fields, $\pairing{X}{Y}^*_\cA = \pairing{X}{Y}_\cA$.
    \end{enumerate}
\end{corollary}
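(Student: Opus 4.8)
The plan is to reduce all three statements to identities about central one-forms, exploiting that on a centred Hermitian calculus the left and right musical maps agree on the centre. Throughout I set $\omega := X^\flat$ and $\eta := Y^\flat$; by Proposition \ref{prop: central forms and bivector fields} these are central one-forms, and by Proposition \ref{cor:centred-dual} the arrows on $\sharp$ and $\flat$ may be suppressed. I also note that $\dagger$ preserves the centre of $\Omega^1_\dee(\A)$ and carries $\LRA{\cX}(\A)$ to itself, so every object below is of the expected type.

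Claim 1 is just the restriction of the $\dagger$-compatibility of the musical maps to the centre. Specialising $\dagger \circ \sharpr = \sharpl \circ \dagger$ from Proposition \ref{prop:musicalisos} to the central form $\omega$, where $\sharpr = \sharpl = \sharp$, gives $(\omega^\sharp)^\dagger = (\omega^\dagger)^\sharp$; since $X = \omega^\sharp$, applying $\flat$ yields $(X^\dagger)^\flat = (X^\flat)^\dagger$.

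For Claim 2 I first extract from the braiding-invariance hypothesis the required symmetry of the form inner product. Condition 2 of Definition \ref{def:canonicalcentred} gives $g(\omega \ox \eta) = g(\eta \ox \omega)$ on central tensors, where $\sigma^{\mathrm{can}}$ acts as the flip; unpacking $g$ via Definition \ref{defn:metric-bilinear} produces $\pairing{\omega^\dagger}{\eta}_\A = \pairing{\eta^\dagger}{\omega}_\A$, and replacing $\omega$ by the central form $\omega^\dagger$ yields the key identity $\pairing{\omega}{\eta}_\A = \pairing{\eta^\dagger}{\omega^\dagger}_\A$. Feeding this into the definition \eqref{eq:vecfield-ips} of the vector-field inner product and invoking Claim 1, I obtain $\pairing{X}{Y}_\A = \pairing{\omega}{\eta}_\A = \pairing{\eta^\dagger}{\omega^\dagger}_\A = \pairing{(Y^\dagger)^\flat}{(X^\dagger)^\flat}_\A = \pairing{Y^\dagger}{X^\dagger}_\A$, which is Claim 2.

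Claim 3 follows by combining Claim 2 with reality. When $X^\dagger = -X$ and $Y^\dagger = -Y$, the two real factors $-1$ cancel (the musical maps are $\C$-linear and the inner product is conjugate-linear in its first slot), so Claim 2 reads $\pairing{X}{Y}_\A = \pairing{Y}{X}_\A$; combining with the Hermitian symmetry $\pairing{X}{Y}^*_\A = \pairing{Y}{X}_\A$ of the metric $\A$-module from Proposition \ref{prop:formstovectorfields} gives $\pairing{X}{Y}^*_\A = \pairing{X}{Y}_\A$. The argument is largely bookkeeping; the only genuinely structural inputs are the coincidence of $\sharpr,\sharpl$ on the centre and the braiding-invariance of the metric, and the main thing to watch is keeping the left/right musical maps and their inverses consistent when passing between bilinear vector fields and central forms.
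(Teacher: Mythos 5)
Your proposal is correct and follows essentially the same route as the paper's proof: part 1 from the $\dagger$-compatibility of the musical maps (Proposition \ref{prop:musicalisos}) together with their coincidence on the centre (Proposition \ref{cor:centred-dual}); part 2 by pulling the vector-field inner product back to central forms via \eqref{eq:vecfield-ips} and using the braiding invariance $g\circ\sigma^{\mathrm{can}}=g$ together with part 1; part 3 from part 2, reality, and Hermitian symmetry. The only difference is presentational—you spell out the intermediate identity $\pairing{\omega}{\eta}_\cA=\pairing{\eta^\dagger}{\omega^\dagger}_\cA$ and the fact that $\dagger$ preserves the centre, both of which the paper leaves implicit.
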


\begin{proof}
    The first point follows from 
    Proposition \ref{cor:centred-dual} and the fact that $\dag \circ \LA{\flat} = \RA{\flat} \circ \dag$. The second is a consequence of the first, since
    \[
        \pairing{X}{Y}_\cA = \pairing{X^\flat}{Y^\flat}_\cA = \pairing{(Y^\flat)^\dagger}{(X^\flat)^\dagger}_\cA = \pairing{(Y^\dagger)^\flat}{(X^\dagger)^\flat}_\cA = \pairing{Y^\dagger}{X^\dagger}_\cA,
    \]
    where for the second equality we have used that $g(\omega \otimes \eta) = g(\eta \otimes\omega)$ for central forms $\omega, \eta$. The third point follows from the second, since a real vector field $X$ satisfies $X^\dagger = -X$ and the inner product is Hermitian.
\end{proof}

\begin{lemma}
\label{lem:der-gen}
    Let $(\Omega_{\dd}^1(\cA), \dagger, \Psi, \bra{\cdot}\ket{\cdot},\sigma^\mathrm{can})$ be a centred Hermitian differential calculus. The set of bilinear vector fields $\LRA{\cX}(\cA)$ generates $\LA{\cX}(\cA)$ (resp. $\RA{\cX}(\cA)$) as a right (resp. left) $\cA$-module.
\end{lemma}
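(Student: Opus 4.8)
The plan is to transport the centredness hypothesis on one-forms across the musical isomorphism to the dual module of vector fields. I would prove the statement for $\LA{\cX}(\cA)$ as a right $\cA$-module and then deduce the claim for $\RA{\cX}(\cA)$ by the mirror-image argument, swapping $\sharpl$ for $\sharpr$ and left for right throughout.

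First I would recall two facts already at our disposal. By Proposition \ref{prop:musicalisos}, the musical map $\sharpl : \Omega^1_\dee(\cA) \to \LA{\cX}(\cA)$ is an \emph{isomorphism of right $\cA$-modules}; in particular it is surjective and right $\cA$-linear. By Proposition \ref{prop: central forms and bivector fields}, this same map carries the centre $\cZ(\Omega^1_\dee(\cA))$ isomorphically onto the bilinear vector fields $\LRA{\cX}(\cA)$, so that $z^{\sharpl} \in \LRA{\cX}(\cA)$ for every central one-form $z$.

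Given these, the argument is short. I would take an arbitrary $X \in \LA{\cX}(\cA)$ and, using surjectivity of $\sharpl$, write $X = \omega^{\sharpl}$ for some $\omega \in \Omega^1_\dee(\cA)$. Centredness (condition 1 of Definition \ref{def:canonicalcentred}) then lets me expand $\omega = \sum_i z_i a_i$ with $z_i \in \cZ(\Omega^1_\dee(\cA))$ and $a_i \in \cA$; applying the right $\cA$-linear map $\sharpl$ gives $X = \sum_i z_i^{\sharpl} a_i$, a right $\cA$-combination of bilinear vector fields. This exhibits $\LRA{\cX}(\cA)$ as a generating set for $\LA{\cX}(\cA)$ as a right module.

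The only point requiring care — and the one I expect to be a potential pitfall rather than a genuine obstacle — is keeping the module sides straight: one must pair the \emph{right}-linearity of $\sharpl$ with the expansion of $\omega$ as a \emph{right} combination of central forms (and dually the left-linearity of $\sharpr$ with a left expansion for the $\RA{\cX}(\cA)$ statement). Since centredness guarantees that $\cZ(\Omega^1_\dee(\cA))$ generates $\Omega^1_\dee(\cA)$ equally as a right, left, or bi-module, both expansions are available and no further hypotheses are needed.
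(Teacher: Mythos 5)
Your proposal is correct and is essentially the paper's own proof: the paper likewise combines the right/left $\cA$-linearity and surjectivity of $\sharpl$ and $\sharpr$ (Proposition \ref{prop:musicalisos}), the identification $\cZ(\Omega_{\dd}^1(\cA)) \cong \LRA{\cX}(\cA)$ under the musical maps (Proposition \ref{prop: central forms and bivector fields}), and centredness of $\Omega_{\dd}^1(\cA)$. You have merely spelled out the one-line argument of the paper, including the (correct) observation that centredness yields right, left, and bimodule expansions interchangeably.
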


\begin{proof}
    Since $\LA{\sharp}$ and $\RA{\sharp}$ are respectively a right and left module map, this follows from the isomorphism $\cZ(\Omega_{\dd}^1(\cA)) \cong \LRA{\cX}(\cA)$ and the fact that $\Omega_{\dd}^1(\cA)$ is generated by its centre.
\end{proof}

\begin{theorem}
\label{thm: real_metric_calc}
    Let $(\Omega_{\dd}^1(\cA), \dagger, \Psi, \bra{\cdot}\ket{\cdot},\sigma^\mathrm{can})$ be a centred Hermitian differential calculus. Then the triple $(\LA{\cX}(\cA), \bra{\cdot}\ket{\cdot}_\cA, D_\ph)$, where $D_\ph := (D, \ph)$, is a right real metric calculus as in Definition \ref{defn:metric-calc}. Symmetrically the triple $(\RA{\cX}(\cA), \prescript{}{\A}{\bra{\cdot}\ket{\cdot}}, D_\ph)$ is a left real metric calculus. In both cases the inner products are strongly non-degenerate.
\end{theorem}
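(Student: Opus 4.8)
The plan is to verify the three clauses of Definition \ref{defn:metric-calc} one at a time, each time invoking a result already proved, and then to record strong non-degeneracy separately. I will carry out the right case $(\LA{\cX}(\cA), \pairing{\cdot}{\cdot}_\cA, D_\ph)$ in full and leave the left case to symmetry under the dagger. Note first that $\ph(D)\subseteq\LRA{\cX}(\cA)\subseteq\LA{\cX}(\cA)$, so $\ph(D)$ does land inside the relevant module.

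For the metric-module axioms, there is nothing new to do: Proposition \ref{prop:formstovectorfields} already establishes that $\bigl(\LA{\cX}(\cA),\pairing{\cdot}{\cdot}_\cA\bigr)$ is a right metric $\A$-module, supplying additivity, the relation $\pairing{X}{Ya}_\cA=\pairing{X}{Y}_\cA a$, Hermitian symmetry, and weak non-degeneracy. The reality condition $\pairing{X}{Y}_\cA^*=\pairing{X}{Y}_\cA$ for $X,Y\in\ph(D)$ is likewise immediate: by Proposition \ref{prop:real-der} the elements of $\ph(D)$ are real bilinear vector fields, so this is exactly Corollary \ref{cor: symmetry inner product}(3).

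The one clause carrying genuine content is the generation requirement that $\ph(D)$ generate $\LA{\cX}(\cA)$ as a right $\A$-module. By Proposition \ref{prop:real-der}, $\ph$ identifies $D$ with the real bilinear vector fields $\LRA{\cX}(\cA)^\dagger$, so I must show these right-generate $\LA{\cX}(\cA)$. I would do this in two steps. First, a Cartesian decomposition shows that $\LRA{\cX}(\cA)^\dagger$ already $\C$-spans $\LRA{\cX}(\cA)$: for $X\in\LRA{\cX}(\cA)$, the dagger preserves bilinearity so $X^\dagger\in\LRA{\cX}(\cA)$, and because the dagger on vector fields is antilinear and involutive, both $\tfrac12(X-X^\dagger)$ and $\tfrac{i}{2}(X+X^\dagger)$ are real; the identity
\[
X=\tfrac12\bigl(X-X^\dagger\bigr)-i\cdot\tfrac{i}{2}\bigl(X+X^\dagger\bigr)
\]
then writes $X$ as a $\C$-combination of real bilinear vector fields. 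Second, Lemma \ref{lem:der-gen} gives that $\LRA{\cX}(\cA)$ right-generates $\LA{\cX}(\cA)$. Composing the two, and using $\C\cdot1\subseteq\A$ so that $\C$-linear combinations are special cases of right $\A$-linear combinations, yields that $\ph(D)=\LRA{\cX}(\cA)^\dagger$ right-generates $\LA{\cX}(\cA)$, as required. Finally, strong non-degeneracy is inherited from the one-forms exactly as noted after \eqref{eq:vecfield-ips}: the inner product \eqref{eq:vecfield-ips} is pulled back along the flat isomorphism $\LA{\flat}$, so the associated map into the right dual remains an isomorphism.

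The hard part, and the only step that is not a direct citation, is the generation clause — specifically passing from the \emph{real} bilinear vector fields to all of $\LA{\cX}(\cA)$. The whole difficulty is concentrated in checking that the antilinearity and involutivity of the dagger make the two summands in the Cartesian decomposition land in $\LRA{\cX}(\cA)^\dagger$; once that is in hand, Proposition \ref{prop:real-der} and Lemma \ref{lem:der-gen} chain together mechanically, and the remaining clauses are quotations of Propositions \ref{prop:formstovectorfields} and the remark following \eqref{eq:vecfield-ips} together with Corollary \ref{cor: symmetry inner product}. The left statement follows by applying the same argument verbatim after exchanging left and right via the dagger.
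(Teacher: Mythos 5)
Your proof is correct and follows essentially the same route as the paper's: Proposition \ref{prop:formstovectorfields} for the metric-module axioms, Proposition \ref{prop:real-der} plus Lemma \ref{lem:der-gen} for generation, Corollary \ref{cor: symmetry inner product} for reality, and the remark after \eqref{eq:vecfield-ips} for strong non-degeneracy. The only difference is that you make explicit the Cartesian decomposition $X=\tfrac12(X-X^\dagger)-i\cdot\tfrac{i}{2}(X+X^\dagger)$ showing that the real bilinear vector fields $\C$-span $\LRA{\cX}(\cA)$ — a step the paper leaves implicit when passing from $\ph(D)=\LRA{\cX}(\cA)^\dagger$ to generation of $\LA{\cX}(\cA)$ — which is a worthwhile clarification but not a different argument.
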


\begin{proof}
	We treat the right module (left-linear) case. Firstly, $\ph$ surjects onto $\LRA{\cX}(\cA)^\dag$ (see Proposition \ref{prop:real-der}), so using Lemma \ref{lem:der-gen} we find that $\ph(D)$ generates $\LA{\cX}(\cA)$ as a right $\A$-module. Secondly, we have to show that for all $\partial_1, \partial_2 \in \ph(D)$,
	\[
		\pairing{\ph(\partial_1)}{\ph(\partial_2)}_\cA = \pairing{\ph(\partial_1)}{\ph(\partial_2)}^*_\cA.
	\]
	But $\ph(\partial_i)$ is bilinear and real by Lemma \ref{def: def_map_phi} and Proposition \ref{prop:real-der}, so this is a direct consequence of Corollary \ref{cor: symmetry inner product}.
\end{proof}

\section{Relating  connections on vector fields and forms}
\label{sec:LC}

We now investigate how connections on forms relate to affine connections on vector fields, particularly in the context of centred Hermitian differential calculi. We start by recalling the respective formalisms of \cite{AWcurvature} and \cite{MRLC} for dealing with connections, and how 
metric compatibility and torsion freeness are expressed in each framework.

\subsection{Affine connections on modules of vector fields}

The following definitions and theorems are from \cite{AWcurvature}. We state them only for calculi over right $\A$-modules, but they all have analogs for calculi over left modules.

\begin{definition}
\label{def:aff-conn}
	Let $(\cX(\cA), \bra{\cdot}\ket{\cdot}_\cA, \fg_\ph)$ be a right metric calculus. A \emph{right affine connection} on $(\cX(\cA), \fg)$ is a $\C$-bilinear map $\nabla : \fg \times \cX(\cA) \rightarrow \cX(\cA)$ which for all $\lambda\in\C$, $\partial\in \fg$ and $X,Y\in \cX$ satisfies
	\[
	\begin{split}
		& \nabla_\partial (X + Y) = \nabla_\partial X + \nabla_\partial Y \\
		& \nabla_{\lambda \partial + \partial'} X = \lambda \nabla_\partial X + \nabla_\partial' X \\
		& \nabla_\partial (X a) = (\nabla_\partial X)a + X \partial a.
	\end{split}
	\]
	We call the data $(\cX(\cA), \bra{\cdot}\ket{\cdot}_\cA, \fg_\ph, \nabla)$ a \emph{right connection calculus}.
	
	If furthermore $(\cX(\cA), \bra{\cdot}\ket{\cdot}_\cA, \fg_\ph)$ is a right real metric calculus and $\pairing{\nabla_\partial X}{Y} = \pairing{\nabla_\partial X}{Y}^*$ for all $X, Y \in \ph(\fg)$ and $\partial \in \fg$, then we say that the data $(\cX(\cA), \bra{\cdot}\ket{\cdot}_\cA, \fg_\ph, \nabla)$ is a \emph{right real connection calculus}.
\end{definition}

\begin{definition}
\label{defn:torsion-metric-derivations}
	Let $(\cX(\cA), \bra{\cdot}\ket{\cdot}_\cA, \fg_\ph, \nabla)$ be a right real connection calculus. The calculus is called \emph{metric} if
	\[
		\partial \pairing{X}{Y}_\A = \pairing{\nabla_\partial X}{Y}_\A + \pairing{X}{\nabla_\partial Y}_\A
	\]
	for all $X, Y \in \cX(\cA)$, $\partial \in \fg$, and \emph{torsion-free} if
	\[
		\tau(\partial_1,\partial_2):=\nabla_{\partial_1} \ph(\partial_2) - \nabla_{\partial_2} \ph(\partial_1) - \ph([\partial_1, \partial_2]) 
	\]
	vanishes for all $\partial_1,\partial_2\in\fg$.
	If $(\cX(\cA), \bra{\cdot}\ket{\cdot}_\cA, \fg_\ph, \nabla)$ is both metric and torsion-free, we say it is a \emph{right pseudo-Riemannian calculus} over $\cX(\cA)$. If moreover $\pairing{\nabla_{\partial_1} \nabla_{\partial_2} X}{Y}_\cA$ is Hermitian for all $\partial_1, \partial_2 \in \fg$ and $X, Y \in \ph(\fg)$, then the pseudo-Riemannian calculus is called \emph{real}.
\end{definition}

%

\begin{example}
\label{eg:pseudo-vectorfields}
	Let $M$ be a smooth real manifold with a pseudo-Riemannian metric $g$. Denote by $\Gamma_\C(TM)$  the complexification of smooth sections of the tangent bundle. There is a one-to-one mapping $\ph : \DER(C^\infty(M)) \rightarrow \Gamma_\C(TM)$, where $C^\infty(M)$ is the algebra of smooth complex-valued functions on $M$. It is well-known that there is a unique metric compatible and torsion-free connection $\nabla$ on $\Gamma(TM)$, called the Levi-Civita connection, and it naturally extends complex-linearly to $\Gamma_\C(TM)$. 
	
	There are two approaches to extending the metric $g$ to $\Gamma_\C(TM)$: bi-linearly or sesqui-linearly. The latter corresponds to the approach of \cite{AWcurvature,AW2}, while the former corresponds to the bilinear ``quantum metric'' of Definition \ref{defn:metric-bilinear}, due to \cite{Gosw&alCentredBimod}. 
﻿
For the sesquilinear extension of the metric,
the calculus $(\Gamma_\C(TM), g, \DER(C^\infty(M))_\ph)$ is a real metric calculus and  the Levi-Civita connection makes $(\Gamma_\C(TM), g, \DER(C^\infty(M))_\ph, \nabla)$ into a (real) pseudo-Riemannian calculus.
\end{example}

\begin{example}
	The 2-torus $\T^2$ acts by isometries on $\T^2$ and $\SS^3$. The $\theta$-deformation construction (see \cite{CL}) yields one-parameter families of noncommutative algebras $C^\infty(\T^2_\theta)$ and $C^\infty(\Sf^3_\theta)$. In \cite{AWcurvature}, pseudo-Riemannian calculi are explicitly constructed for both of these families by analogy with the classical calculi. In subsection \ref{subsec:isospectral}  we return to these examples and generalise to any $\theta$-deformation coming from a free toral action on a Riemannian manifold, using differential forms and the method of \cite{MRLC}.
\end{example}

One would like  to prove that there exists a unique metric and torsion-free connection for every real metric calculus. In the framework of \cite{AWcurvature, AW2}, it has been shown that there exists at most one such connection. In \cite{AH25} a more general setup is considered, and sufficient conditions for existence are explored.

\begin{theorem}\cite[Theorem 3.4]{AWcurvature}
\label{thm: uniqueness vector fields}
	Let $(\cX(\cA), \bra{\cdot}\ket{\cdot}_\cA, \fg_\ph)$ be a right real metric calculus. Then there exists at most one affine connection $\nabla$ on $\cX(\cA)$ such that $(\cX(\cA), \bra{\cdot}\ket{\cdot}_\cA, \fg_\ph, \nabla)$ is a right pseudo-Riemannian calculus.
\end{theorem}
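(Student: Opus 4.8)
The plan is to prove uniqueness by comparing two candidate connections rather than by writing down an explicit Koszul formula. Suppose $\nabla$ and $\nabla'$ are two right affine connections, each making the data into a right pseudo-Riemannian calculus, and set $\Delta_\partial X := \nabla_\partial X - \nabla'_\partial X$. Because both connections satisfy the same Leibniz rule, the inhomogeneous terms $X\partial a$ cancel, so $\Delta$ is right $\A$-linear in its vector-field argument: $\Delta_\partial(Xa) = (\Delta_\partial X)a$. Subtracting the metric-compatibility identities for $\nabla$ and $\nabla'$ yields the antisymmetry $\pairing{\Delta_\partial X}{Y}_\A = -\pairing{X}{\Delta_\partial Y}_\A$ for all $X,Y\in\cX(\cA)$; subtracting the torsion-freeness identities yields the symmetry $\Delta_{\partial_1}\ph(\partial_2) = \Delta_{\partial_2}\ph(\partial_1)$; and since $\pairing{\nabla_\partial X}{Y}_\A$ and $\pairing{\nabla'_\partial X}{Y}_\A$ are each self-adjoint for $X,Y\in\ph(\fg)$ (this is the reality built into a real connection calculus, Definition \ref{def:aff-conn}), their difference $\pairing{\Delta_\partial X}{Y}_\A$ is self-adjoint as well.

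The heart of the argument is a Koszul-type cyclic computation on the scalars $c_{ijk} := \pairing{\Delta_{\partial_i}\ph(\partial_j)}{\ph(\partial_k)}_\A$. The symmetry condition reads $c_{ijk}=c_{jik}$, while the metric antisymmetry, after using Hermitian symmetry $\pairing{U}{V}_\A^*=\pairing{V}{U}_\A$ to move $\ph(\partial_j)$ back into the first slot, reads $c_{ijk}=-c_{ikj}^*$. Chaining these two relations around a six-term cycle,
\[
c_{ijk}=c_{jik}=-c_{jki}^*=-c_{kji}^*=c_{kij}=c_{ikj}=-c_{ijk}^*,
\]
shows that $c_{ijk}$ is skew-adjoint. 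This is exactly where the classical Koszul argument stalls in the Hermitian (rather than symmetric bilinear) setting: metric compatibility and torsion-freeness alone only force $c_{ijk}+c_{ijk}^*=0$, never $c_{ijk}=0$. The reality hypothesis, which gives $c_{ijk}=c_{ijk}^*$, is precisely what closes the gap: combined with skew-adjointness it yields $2c_{ijk}=0$, hence $c_{ijk}=0$.

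It remains to pass from the vanishing of all $c_{ijk}$ to $\Delta=0$. Fixing $i,j$, the identity $\pairing{\Delta_{\partial_i}\ph(\partial_j)}{\ph(\partial_k)}_\A=0$ for every $k$, together with right $\A$-linearity of the inner product in its second slot and the fact that $\ph(\fg)$ generates $\cX(\cA)$ as a right $\A$-module, gives $\pairing{\Delta_{\partial_i}\ph(\partial_j)}{Z}_\A=0$ for all $Z\in\cX(\cA)$. Weak non-degeneracy \eqref{eq:weak-nd} then forces $\Delta_{\partial_i}\ph(\partial_j)=0$. Finally, since $\ph(\fg)$ generates $\cX(\cA)$ and $\Delta_\partial$ is right $\A$-linear, $\Delta_\partial X=0$ for every $X\in\cX(\cA)$ and $\partial\in\fg$, i.e.\ $\nabla=\nabla'$.

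I expect the main obstacle to be the second paragraph: tracking the conjugations introduced by the Hermitian inner product through the cyclic identity, and recognising that the reality condition of the real connection calculus is exactly the ingredient needed to remove the residual skew-adjoint part. Everything else — the cancellations defining $\Delta$, and the generation-plus-non-degeneracy endgame — is routine.
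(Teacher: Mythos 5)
Your proof is correct. The key hypothesis you needed --- that both candidate connections satisfy the reality condition $\pairing{\nabla_\partial X}{Y}_\A=\pairing{\nabla_\partial X}{Y}_\A^*$ for $X,Y\in\ph(\fg)$ --- is indeed available, since Definition \ref{defn:torsion-metric-derivations} defines a right pseudo-Riemannian calculus as a right \emph{real} connection calculus that is metric and torsion-free; from there your difference tensor $\Delta_\partial$ is right $\A$-linear, satisfies $\pairing{\Delta_\partial X}{Y}_\A=-\pairing{X}{\Delta_\partial Y}_\A$ and $\Delta_{\partial_1}\ph(\partial_2)=\Delta_{\partial_2}\ph(\partial_1)$, the six-term cycle correctly yields $c_{ijk}=-c_{ijk}^*$, reality gives $c_{ijk}=c_{ijk}^*$, and the endgame via generation (Definition \ref{defn:metric-calc}) and weak non-degeneracy \eqref{eq:weak-nd} is sound.

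Your route is, however, not the one in the cited source --- the paper itself gives no proof, deferring to \cite[Theorem 3.4]{AWcurvature} --- and the difference is worth recording. Arnlind and Wilson prove uniqueness by first deriving a noncommutative Koszul formula: for any connection making the data a right pseudo-Riemannian calculus,
\begin{align*}
2\pairing{\nabla_{\partial_1}\ph(\partial_2)}{\ph(\partial_3)}_\A
&= \partial_1\pairing{\ph(\partial_2)}{\ph(\partial_3)}_\A + \partial_2\pairing{\ph(\partial_1)}{\ph(\partial_3)}_\A - \partial_3\pairing{\ph(\partial_1)}{\ph(\partial_2)}_\A\\
&\quad + \pairing{\ph([\partial_1,\partial_2])}{\ph(\partial_3)}_\A - \pairing{\ph(\partial_2)}{\ph([\partial_1,\partial_3])}_\A - \pairing{\ph(\partial_1)}{\ph([\partial_2,\partial_3])}_\A,
\end{align*}
where the reality hypothesis enters during the derivation, to identify $\pairing{\ph(\partial_2)}{\nabla_{\partial_3}\ph(\partial_1)}_\A$ with $\pairing{\nabla_{\partial_3}\ph(\partial_1)}{\ph(\partial_2)}_\A$. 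The right-hand side is independent of $\nabla$, so uniqueness follows by the same generation-plus-non-degeneracy step that you use. Thus both arguments consume exactly the same three identities, and reality plays the same indispensable role in each: in theirs it closes the Hermitian flip inside the Koszul computation, in yours it kills the residual skew-adjoint part of $c_{ijk}$. What the Koszul route buys is an explicit, connection-independent formula for $\pairing{\nabla_{\partial_1}\ph(\partial_2)}{\ph(\partial_3)}_\A$ on generators, which is the starting point for existence arguments and the curvature computations of \cite{AWcurvature}; what your route buys is a shorter pure-uniqueness argument that pinpoints exactly where the classical bilinear proof would stall in the Hermitian setting --- at skew-adjointness --- and why the reality condition is precisely the missing ingredient.
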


One of the main motivations for defining Levi-Civita connections on noncommutative algebras is to investigate curvature on these ``noncommutative spaces". With the formalism of \cite{AWcurvature}, curvature can be defined as usual in differential geometry.

\begin{definition}
	Let $(\cX(\cA), \bra{\cdot}\ket{\cdot}_\cA, \fg_\ph, \nabla)$ be a right pseudo-Riemannian calculus. The \emph{curvature operator} of $\nabla$ is defined on $\cX(\cA)$ by
	 \[
	 	R(\partial_1, \partial_2) = \nabla_{\partial_1} \nabla_{\partial_2} - \nabla_{\partial_2} \nabla_{\partial_1} - \nabla_{[\partial_1, \partial_2]}.
	 \]
\end{definition}


The definition of Ricci and scalar curvature is discussed in \cite{AWcurvature} with additional hypotheses. 
On the other hand, in \cite{MRWeitzenbock} Ricci and scalar curvature are generically defined in the context of Levi-Civita connections on noncommutative differential forms. We will show that when both frameworks apply, they are compatible, and so we can define Ricci and scalar curvature for pseudo-Riemannian calculi. 
We will discuss the full curvature tensors later, and refer to \cite{AWcurvature} and \cite{MRWeitzenbock} for discussions of Ricci and scalar curvatures.

\subsection{Connections on differential forms}
We now recall the standard definitions of (bimodule) connections on modules. In subsection \ref{subsec:exist} we will prove an existence and uniqueness theorem for Levi-Civita connections combining the frameworks of \cite{MRLC} and \cite{AWcurvature}, thus recovering in a new way a result of \cite{Gosw&alCentredBimod}. 
Starting from connections on differential forms and dualising then provides an existence proof for pseudo-Riemannian calculi coming from centred calculi.

Given a first order differential structure $(\Omega^1_\dee(\A),\dag)$, a right connection on a right $\A$-module $\X$ is a $\C$-linear map
\[
\nablar:\X\to \X\otimes_{\A}\Omega^{1}_{\dee},\quad\mbox{such that}\quad
\overrightarrow{\nabla}(x a)=\overrightarrow{\nabla}(x)a+x\ox \dee a.
\]
Similarly, a left connection on a left $\A$-module $\X$ is a $\C$-linear map
\[
\nablal:\X\to \Omega^{1}_{\dee}\otimes_{\A}\X,\quad\mbox{such that}\quad
\overrightarrow{\nabla}(a x)=a\overrightarrow{\nabla}(x)+\dee a\ox x.
\]
Connections always exist on finite projective modules \cite{CQ,Landi}. In the presence of an Hermitian inner product, we have the well-known definition of Hermitian connections which we recall below. We first need to introduce a notation for pairings between tensors of different orders:

\begin{notation}
    For $\omega, \eta, \rho \in \Omega^1_\dee(\cA)$, we define
    \begin{align*}
        & \pairing{\omega}{\eta \ox \rho}_{\Omega^1_\dee(\cA)} := \pairing{\omega}{\eta}_\cA \rho \\
        & \pairing{\eta \ox \rho}{\omega}_{\Omega^1_\dee(\cA)} := \rho^* \pairing{\eta}{\omega}_\cA.
    \end{align*}
    We then extend this by linearity to $\pairing{\omega}{\alpha}_{\Omega^1_\dee(\cA)}$ and $\pairing{\alpha}{\omega}_{\Omega^1_\dee(\cA)}$ for $\omega \in \Omega^1_\dee(\cA)$ and $\alpha \in T^2_\dee(\cA)$. We also define left pairings by ${}_{\Omega^1_\dee(\cA)}\pairing{\.}{\.} := \pairing{\.^\dag}{\.^\dag}_{\Omega^1_\dee(\cA)}$.
\end{notation}

\begin{definition}Given a connection $\overrightarrow{\nabla}$ on a right inner product $\A$-module $\X$ we say that $\overrightarrow{\nabla}$ is Hermitian if
for all $x,y\in\X$ we have
\[
-\pairing{\overrightarrow{\nabla}x}{y}_{\Omega^1_\dee(\cA)} + \pairing{x}{\overrightarrow{\nabla}y}_{\Omega^1_\dee(\cA)} = \dee\left(\pairing{x}{y}_\A \right).
\]
For left connections we instead require
\[
{}_{\Omega^1_\dee(\cA)}\pairing{\overleftarrow{\nabla}x}{y}-{}_{\Omega^1_\dee(\cA)} \pairing{x}{\overleftarrow{\nabla}y} = \dee\left({}_\A \pairing{x}{y}\right).
\]
\end{definition}
The differential \eqref{eq: second-order-diff} allows us to  ask whether a connection on $\Omega^1_\dee(\A)$ is torsion-free, leading to the well-known definitions from the algebraic literature \cite{Landi, BMbook}.
\begin{definition}
\label{defn:torsion on forms}
Let $(\Omega^{1}_{\dee},\dag,\Psi)$ be an Hermitian second-order differential structure as in Definition \ref{ass:fgp-metric-junk} and $\nablar:\Omega^{1}_{\dee}\to T^{2}_{\dee}$ a right connection and $\nablal:\Omega^{1}_{\dee}\to T^{2}_{\dee}$ a left connection. The \emph{torsion} of $\nablar$ and $\nablal$, respectively, are the maps
\begin{align*}
(1-\Psi)\circ\nablar+\d\in\overrightarrow{\textnormal{Hom}}_{\A}(\Omega^{1}_{\dee},T^{2}_{\dee})\,\,\\
(1-\Psi)\circ\nablal-\d\in\overleftarrow{\textnormal{Hom}}_{\A}(\Omega^{1}_{\dee},T^{2}_{\dee}).
\end{align*}
\end{definition}
Since $\Omega^{1}_{\dee}$ and $T^{2}_{\dee}$ are $\dag$-bimodules, for each right connection $\overrightarrow{\nabla}:\Omega^{1}_{\dee}\to T^{2}_{\dee}$ there is a conjugate left connection $\overleftarrow{\nabla}$ given by
$\overleftarrow{\nabla}=-\dag\circ\overrightarrow{\nabla}\circ\dag$ which is Hermitian if and only if $\overrightarrow{\nabla}$ is Hermitian.
\begin{definition}
\label{ass:ess}
Suppose that $\sigma: T^{2}_\dee(\A)\to T^2_\dee(\A)$ is a braiding, that is, an invertible bimodule map such that $\dag\circ\sigma=\sigma^{-1}\circ\dag$. If 
the conjugate connections 
$\nablar,\nablal$ satisfy
\[
\sigma\circ\overrightarrow{\nabla}=\overleftarrow{\nabla},
\]
then we say that $(\nablar,\sigma)$ is a $\sigma$-$\dag$-bimodule connection.
\end{definition}
\begin{remark} The notion of a $\sigma$-$\dag$-bimodule connection can be defined for general $\dag$-bimodules $\X$ equipped with a braiding $\sigma:\X\otimes_{\A}\Omega^{1}_{\dee}(\A)\to \Omega^{1}_{\dee}(\A)\otimes_{\A}\X$, see \cite[Definition 5.8]{MRLC}. For the present paper it suffices to confine ourselves to the case $\X=\Omega^{1}_{\dee}$.
\end{remark}

\subsection{Dualising noncommutative connections}
\label{subsec:dualising}

In differential geometry one can relate the evaluation of a given connection, or covariant derivative, on differential one-forms and on vector fields. This can be expressed locally using the Christoffel symbols or globally in terms of duality. It is this latter form that we extend, first on general inner product $\dag$-bimodules of forms and then specifically for centred Hermitian calculi.

Throughout this section, we fix $\cA$ a unital $*$-algebra and $(\Omega_{\dd}^1(\cA), \bra{\cdot}\ket{\cdot}_\cA, \dagger)$ an Hermitian first-order differential structure over $\cA$.

\begin{proposition}
\label{prop:left-right-left-right}
	 Suppose $\RA{\nabla}$ (resp. $\LA{\nabla}$) is a right (resp. left) connection on $(\Omega_{\dd}^1(\cA), \bra{\cdot}\ket{\cdot}_\cA, \dagger)$. For $X \in \RA{\cX}(\cA)$
	 and $\partial \in D$, $\omega \in \Omega_{\dd}^1(\cA)$, the formula
	 \[
	 \Pairing{\RA{\nabla}_\partial X}{\omega} := \partial \Pairing{X}{\omega} - \product{X}{\RA{\nabla}(\omega)}{\ph(\partial)}
	 \]
	 defines a left affine connection on the left $\A$-module $\RA{\cX}(\cA)$. Similarly, for $X \in \LA{\cX}(\cA)$
	 and $\partial \in D$, $\omega \in \Omega_{\dd}^1(\cA)$, the formula
	 \[
	 \Pairing{\omega}{\LA{\nabla}_\partial X} := \partial \Pairing{\omega}{X} - \product{\ph(\partial)}{\LA{\nabla}(\omega)}{X}
	 \]
	 defines a right affine connection on the right $\A$-module $\LA{\cX}(\cA)$.
\end{proposition}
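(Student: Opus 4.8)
The plan is to verify the three defining properties of a left affine connection — the left-module analogue of Definition \ref{def:aff-conn} — for the proposed formula: that $\RA{\nabla}_\partial X$ is a genuine element of $\RA{\cX}(\cA)$, that the assignment $(\partial, X) \mapsto \RA{\nabla}_\partial X$ is $\C$-bilinear, and that it satisfies the Leibniz rule for the left $\A$-action on $\RA{\cX}(\cA)$. Bilinearity is immediate: additivity in $X$ is clear, while linearity in $\partial$ holds because $\partial \mapsto \partial(\cdot)$ is linear on derivations, $\ph$ is $\C$-linear by Proposition \ref{def: def_map_phi}, and $\product{X}{\cdot}{\cdot}$ is linear in its third argument. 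Likewise $\RA{\nabla}_\partial X$ is automatically $\C$-linear as a functional of $\omega$, so the real content lies in the two remaining points.

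The crux is well-definedness, namely that $\RA{\nabla}_\partial X$ is right $\A$-linear and hence lands in $\RA{\cX}(\cA)$ rather than merely in the full $\C$-linear dual. First I would evaluate on $\omega a$ and expand. Right-linearity of $X$ gives $\Pairing{X}{\omega a} = \Pairing{X}{\omega} a$, so the derivation term yields $(\partial\Pairing{X}{\omega})a + \Pairing{X}{\omega}\,\partial a$ by the Leibniz rule for $\partial$. The right-connection Leibniz rule gives $\RA{\nabla}(\omega a) = \RA{\nabla}(\omega) a + \omega \ox \dd a$; writing $\RA{\nabla}(\omega) = \sum_i \alpha_i \ox \beta_i$ and using that $\ph(\partial) \in \LRA{\cX}(\cA)$ is bilinear (Proposition \ref{def: def_map_phi}), the first piece collapses to $\product{X}{\RA{\nabla}(\omega)}{\ph(\partial)}\,a$, while the junk piece satisfies
\[
\product{X}{\omega \ox \dd a}{\ph(\partial)} = \Pairing{X}{\omega}\,\Pairing{\dd a}{\ph(\partial)} = \Pairing{X}{\omega}\,\partial a,
\]
the last equality being the defining relation $\Pairing{\ph(\partial)}{\dd a} = \partial a$ of $\ph$. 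The point of the whole computation — and the step I expect to carry the argument — is that the two copies of $\Pairing{X}{\omega}\,\partial a$, one produced by the Leibniz rule of the derivation and one by the Leibniz rule of the connection, cancel exactly, leaving $\Pairing{\RA{\nabla}_\partial X}{\omega a} = \Pairing{\RA{\nabla}_\partial X}{\omega}\,a$. This is precisely the compatibility between $\ph$ and the differential, and it is what forces the output to be right-linear.

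Finally, for the Leibniz rule I would pair $\RA{\nabla}_\partial(aX)$ with an arbitrary $\omega$, using the left-module identity $\Pairing{aX}{\omega} = a\Pairing{X}{\omega}$ and left-linearity of the product in its first slot, $\product{aX}{\RA{\nabla}(\omega)}{\ph(\partial)} = a\,\product{X}{\RA{\nabla}(\omega)}{\ph(\partial)}$. Expanding $\partial(a\Pairing{X}{\omega})$ by the Leibniz rule gives
\[
\Pairing{\RA{\nabla}_\partial(aX)}{\omega} = (\partial a)\Pairing{X}{\omega} + a\,\Pairing{\RA{\nabla}_\partial X}{\omega} = \Pairing{(\partial a) X + a\,\RA{\nabla}_\partial X}{\omega},
\]
and since this holds for every $\omega$ and the pairing is evaluation, $\RA{\nabla}_\partial(aX) = (\partial a) X + a\,\RA{\nabla}_\partial X$, which is the required left-module Leibniz rule. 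The statement for the left connection $\LA{\nabla}$ on $\LA{\cX}(\cA)$ then follows by the mirror-image computation, exchanging the roles of the left and right pairings, so I would record it by symmetry rather than repeat the calculation.
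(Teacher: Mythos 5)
Your proof is correct and takes essentially the same approach as the paper's: well-definedness of the dual connection via the cancellation of the two $\Pairing{X}{\omega}\,\partial a$ terms produced by the Leibniz rules of $\partial$ and of $\RA{\nabla}$ (using bilinearity of $\ph(\partial)$ and the defining relation $\Pairing{\ph(\partial)}{\dd a}=\partial a$), followed by a direct verification of the module Leibniz rule. The only difference is cosmetic: the paper writes out the $\LA{\nabla}$ case on $\LA{\cX}(\cA)$ and declares the other analogous, whereas you write out the $\RA{\nabla}$ case on $\RA{\cX}(\cA)$ and invoke symmetry for the mirror statement.
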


\begin{proof}
	We only show the computation for left-linear vector fields, the right being analogous. First we show that $\LA{\nabla}_\partial X$ is a left-linear vector field. For $X \in \LA{\cX}(\A)$ and $a \in \cA$,
	\[
	\begin{split}
		\Pairing{a \omega}{\LA{\nabla}_\partial X}
		&= \partial \Pairing{a \omega}{X} - \product{\ph(\partial)}{\LA{\nabla}(a \omega)}{X} \\
		&= (\partial a) \Pairing{\omega}{X} + a \partial \Pairing{\omega}{X} - \product{\ph(\partial)}{a \LA{\nabla}(\omega)}{X} - \product{\ph(\partial)}{\dd a \otimes \omega}{X}\\
		&= (\partial a) \Pairing{\omega}{X} + a \partial \Pairing{\omega}{X} - \product{\ph(\partial)}{a \LA{\nabla}(\omega)}{X} - \Pairing{\ph(\partial)}{\dee a} \Pairing{\omega}{X}\\
		&= (\partial a) \Pairing{\omega}{X} + a \left( \partial \Pairing{\omega}{X} - \product{\ph(\partial)}{\LA{\nabla}(\omega)}{X} \right) - (\partial a) \Pairing{\omega}{X}\\
		&=a \left( \partial \Pairing{\omega}{X} - \product{\ph(\partial)}{\LA{\nabla}(\omega)}{X} \right) = a\Pairing{ \omega}{\LA{\nabla}_\partial X}.
	\end{split}
	\]
    The fourth equality comes from the identity $\product{\ph(\partial)}{a \eta \otimes_\cA \rho}{X} = a \product{\ph(\partial)}{\eta \otimes_\cA \rho}{X}$ which holds because $\ph(\partial)$ is bilinear. We deduce that $\LA{\nabla}_\partial X$ is a left-linear vector field. It is clearly $\C$-linear in $\partial$. Moreover, for any $b \in \cA$,
	\[
	\begin{split}
	    \Pairing{\omega}{\LA{\nabla}_\partial (Xb)} &= \partial \Pairing{\omega}{Xb}- \product{\ph(\partial)}{\LA{\nabla}(\omega)}{Xb} \\
	    &= \Pairing{\omega}{X} (\partial b) + \big( \partial \Pairing{\omega}{X} - \product{\ph(\partial)}{\LA{\nabla}(\omega)}{X} \big) b \\
	    &= \Pairing{\omega}{X (\partial b) + (\LA{\nabla}_\partial X) b},
	\end{split}
	\]
	which finishes the proof.
\end{proof}

\begin{proposition}\label{prop: dagger and affine connections}
    Let $\RA{\nabla}$ and $\LA{\nabla} := - \dagger \circ \RA{\nabla} \circ \dagger$ be a conjugate pair of connections on $(\Omega_{\dd}^1(\cA), \bra{\cdot}\ket{\cdot}_\cA, \dagger)$. Then the dagger intertwines the corresponding affine connections on vector fields, so that
    \[
        (\RA{\nabla}_\partial X)^\dagger = \LA{\nabla}_\partial (X^\dagger)
    \]
    for all $\partial \in D$ and $X \in \RA{\cX}(\cA)$.
\end{proposition}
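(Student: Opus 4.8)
The plan is to test the identity by pairing both sides against an arbitrary one-form. Both $(\RA{\nabla}_\partial X)^\dagger$ and $\LA{\nabla}_\partial(X^\dagger)$ lie in the right module $\LA{\cX}(\cA)$ of left-linear vector fields, so it suffices to show $\Pairing{\omega}{(\RA{\nabla}_\partial X)^\dagger} = \Pairing{\omega}{\LA{\nabla}_\partial(X^\dagger)}$ for every $\omega \in \Omega_{\dd}^1(\cA)$. The essential bookkeeping tools are the two avatars of \eqref{eq:pair-dag}: for a right-linear $Y$ one has $\Pairing{\omega}{Y^\dagger} = \Pairing{Y}{\omega^\dagger}^*$, while for a left-linear $Z$ one has $\Pairing{Z^\dagger}{\omega} = \Pairing{\omega^\dagger}{Z}^*$. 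Both follow from \eqref{eq:pair-dag} together with the fact that $\dagger$ is involutive on one-forms and on vector fields.

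First I would expand the left-hand side. Applying the first dagger identity with $Y = \RA{\nabla}_\partial X \in \RA{\cX}(\cA)$ gives $\Pairing{\omega}{(\RA{\nabla}_\partial X)^\dagger} = \Pairing{\RA{\nabla}_\partial X}{\omega^\dagger}^*$, and the defining formula of Proposition \ref{prop:left-right-left-right} turns this into
\[
\big( \partial \Pairing{X}{\omega^\dagger} \big)^* - \big( \product{X}{\RA{\nabla}(\omega^\dagger)}{\ph(\partial)} \big)^*.
\]
On the other side, the defining formula for $\LA{\nabla}_\partial$ on the left-linear field $X^\dagger$ gives $\Pairing{\omega}{\LA{\nabla}_\partial(X^\dagger)} = \partial\Pairing{\omega}{X^\dagger} - \product{\ph(\partial)}{\LA{\nabla}(\omega)}{X^\dagger}$. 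It then remains to match the two derivation terms and the two product terms separately.

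For the derivation term, since $\partial \in D$ is Hermitian we have $(\partial a)^* = \partial(a^*)$, and the dagger identity gives $\Pairing{X}{\omega^\dagger}^* = \Pairing{\omega}{X^\dagger}$; hence $(\partial\Pairing{X}{\omega^\dagger})^* = \partial\Pairing{\omega}{X^\dagger}$, matching the first term on the right. For the product term I would write $\RA{\nabla}(\omega^\dagger) = \sum \alpha\ox\beta$, so that by Definition \ref{def: product vector fields} conjugation reverses the two factors and, using the dagger identities on the individual pairings,
\[
\big( \product{X}{\alpha\ox\beta}{\ph(\partial)} \big)^* = \Pairing{\beta}{\ph(\partial)}^*\,\Pairing{X}{\alpha}^* = \Pairing{\ph(\partial)^\dagger}{\beta^\dagger}\,\Pairing{\alpha^\dagger}{X^\dagger}.
\]
Now the reality of $\ph(\partial)$ from Proposition \ref{prop:real-der}, namely $\ph(\partial)^\dagger = -\ph(\partial)$, inserts a sign, while the relation $\LA{\nabla} = -\dagger\circ\RA{\nabla}\circ\dagger$ together with the two-tensor dagger $(\alpha\ox\beta)^\dagger = \beta^\dagger\ox\alpha^\dagger$ gives $\LA{\nabla}(\omega) = -\sum\beta^\dagger\ox\alpha^\dagger$, inserting a second sign. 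Summing over the tensor decomposition then yields $(\product{X}{\RA{\nabla}(\omega^\dagger)}{\ph(\partial)})^* = \product{\ph(\partial)}{\LA{\nabla}(\omega)}{X^\dagger}$, which is exactly the remaining term on the right.

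The computation is otherwise routine; the one point requiring care is the sign bookkeeping, and in particular the observation that the minus sign produced by the reality of $\ph(\partial)$ is precisely cancelled against the sign built into the conjugate connection $\LA{\nabla} = -\dagger\circ\RA{\nabla}\circ\dagger$. Were these two signs not to agree, the identity would fail, so verifying their compatibility is the crux of the argument.
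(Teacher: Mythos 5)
Your proposal is correct and follows essentially the same route as the paper's proof: pair both sides against an arbitrary one-form, expand via the defining formulas of Proposition \ref{prop:left-right-left-right}, decompose $\RA{\nabla}(\omega^\dagger)$ into simple tensors (the paper uses Sweedler notation $\eta_{(1)}\ox\eta_{(2)}$ where you write $\sum\alpha\ox\beta$), and cancel the sign from $\ph(\partial)^\dagger=-\ph(\partial)$ against the sign in $\LA{\nabla}=-\dagger\circ\RA{\nabla}\circ\dagger$, using $\partial(a)^*=\partial(a^*)$ for the derivation term. Your closing remark correctly identifies the sign cancellation as the crux, exactly as in the paper's computation.
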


\begin{proof}
    For $\partial \in D$ and $X \in \RA{\cX}(\cA)$, we compute for any $\omega \in \Omega_{\dd}^1(\cA)$,
    \[
        \Pairing{\omega}{( \RA{\nabla}_\partial X )^\dagger} = \Pairing{\RA{\nabla}_\partial X}{\omega^\dagger}^{*} = \left[ \partial \Pairing{X}{\omega^\dagger} \right]^* - \product{X}{\RA{\nabla}(\omega^\dagger)}{\ph(\partial)}^*.
    \]
    If we use Sweedler notation to write $\RA{\nabla}(\omega^\dagger)=\eta_{(1)}\ox\eta_{(2)}$, then since $\RA{\nabla}(\omega^\dagger) = -\LA{\nabla}(\omega)^\dagger$ we find
    \[
    -\nablal(\omega) = \nablar(\omega^\dag)^\dag=(\eta_{(1)}\ox\eta_{(2)})^\dag=\eta_{(2)}^\dag\ox\eta_{(1)}^\dag.
    \]
    With this computation in hand
    \begin{align*}
    \product{X}{\RA{\nabla}(\omega^\dagger)}{\ph(\partial)}^*
    &=\big(\Pairing{X}{\eta_{(1)}}\Pairing{\eta_{(2)}}{\varphi(\partial)}\big)^*\\
    &=\Pairing{\varphi(\partial)^\dag}{\eta_{(2)}^\dag}\Pairing{\eta_{(1)}^\dag}{X^\dag}\\
    &=\product{\ph(\partial)^\dag}{\eta_{(2)}^\dag\ox\eta_{(1)}^\dag}{X^\dag}\\
    &=\product{\ph(\partial)^\dag}{-\nablal(\omega)}{X^\dag}.
    \end{align*} 
    Using $\partial(a)^* = \partial(a^*)$, we get
    \[
        \Pairing{\omega}{( \RA{\nabla}_\partial X )^\dagger} = \partial \Pairing{\omega}{X^\dagger} + \product{\ph(\partial)^\dag}{\LA{\nabla}(\omega)}{X^\dag} = \Pairing{\omega}{\LA{\nabla}_\partial (X^\dagger)},
    \]
    the last equality following from $\ph(\partial)^\dagger = -\ph(\partial)$.
\end{proof}

\begin{proposition}
\label{prop: connections on bilinear vector fields}
    Let  $(\RA{\nabla}, \sigma)$ be a $\dagger$-bimodule connection on $(\Omega_{\dd}^1(\cA), \bra{\cdot}\ket{\cdot}_\cA, \dagger)$, and assume that $\Omega_{\dd}^1(\cA)$ is centred so that $\sigma=\sigma^{\mathrm{can}}$. Then for all bilinear vector fields $X \in \LRA{\cX}(\cA)$ and derivations $\partial \in D$,
    \[
        \RA{\nabla}_\partial X = \LA{\nabla}_\partial X,
    \]
    where $\nablar_\partial$ and $\nablal_\partial$ are the affine connections induced by $\nablar$ and its conjugate $\nablal = -\dag \circ \nablar \circ \dag$.
\end{proposition}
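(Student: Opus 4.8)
The plan is to test the asserted equality $\RA{\nabla}_\partial X = \LA{\nabla}_\partial X$ by pairing both sides against an arbitrary one-form $\omega$, using the defining formulae of Proposition \ref{prop:left-right-left-right}. Since $X$ is bilinear it lies in both $\RA{\cX}(\cA)$ and $\LA{\cX}(\cA)$, so both $\RA{\nabla}_\partial X$ (a priori right-linear) and $\LA{\nabla}_\partial X$ (a priori left-linear) are defined; showing that their values on every $\omega$ agree forces the common underlying map to be bilinear and proves the statement. Writing out both definitions,
\[
\Pairing{\RA{\nabla}_\partial X}{\omega} = \partial\Pairing{X}{\omega} - \product{X}{\RA{\nabla}(\omega)}{\ph(\partial)},\qquad
\Pairing{\omega}{\LA{\nabla}_\partial X} = \partial\Pairing{\omega}{X} - \product{\ph(\partial)}{\LA{\nabla}(\omega)}{X},
\]
the derivative terms coincide at once, because bilinearity of $X$ gives $\Pairing{X}{\omega} = X(\omega) = \Pairing{\omega}{X}$.

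It thus remains to match the two product terms, i.e. to show $\product{X}{\RA{\nabla}(\omega)}{\ph(\partial)} = \product{\ph(\partial)}{\LA{\nabla}(\omega)}{X}$. I would feed in the two structural inputs: first the $\dagger$-bimodule-connection hypothesis $\sigma\circ\RA{\nabla} = \LA{\nabla}$, so that $\LA{\nabla}(\omega) = \sigma(\RA{\nabla}(\omega))$; and second the fact that both $X$ and $\ph(\partial)$ are bilinear vector fields (the latter by Proposition \ref{def: def_map_phi}). The key lemma to isolate is the general swap identity
\[
\product{A}{\gamma}{B} = \product{B}{\sigma(\gamma)}{A}\qquad\text{for all bilinear } A,B\in\LRA{\cX}(\cA),\ \gamma\in T^2_\dee(\cA),
\]
after which the required equation is the special case $A = X$, $B = \ph(\partial)$, $\gamma = \RA{\nabla}(\omega)$.

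The swap identity I expect to follow with no case analysis from Lemma \ref{lemma: antisym_vector_fields} together with the relation $\sigma = 2\Psi - 1$, equivalently $1 - \sigma = 2(1-\Psi)$. Applying Lemma \ref{lemma: antisym_vector_fields} with the roles $X\mapsto B$, $Y\mapsto A$ gives $\product{B}{\gamma}{A} - \product{A}{\gamma}{B} = 2\product{B}{(1-\Psi)\gamma}{A} = \product{B}{\gamma}{A} - \product{B}{\sigma(\gamma)}{A}$; cancelling the common term $\product{B}{\gamma}{A}$ yields $\product{A}{\gamma}{B} = \product{B}{\sigma(\gamma)}{A}$. Substituting back and combining with the already-matched derivative terms gives $\Pairing{\RA{\nabla}_\partial X}{\omega} = \Pairing{\omega}{\LA{\nabla}_\partial X}$ for every $\omega$, which is the assertion.

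The real content, and the place where centredness is indispensable, is packaged inside Lemma \ref{lemma: antisym_vector_fields}: its proof uses that $T^2_\dee(\cA)$ is generated by tensors of central one-forms, on which $\sigma$ acts as the flip, so that the pairings $\Pairing{X}{\cdot}$ and $\Pairing{\cdot}{\ph(\partial)}$ take central values (Lemma \ref{lem:bilinear-central}) and hence commute. If one argued the swap identity directly instead of routing through Lemma \ref{lemma: antisym_vector_fields}, the only genuine obstacle would be extending it from central simple tensors $\xi\ox\zeta$ to all of $T^2_\dee(\cA)$; this requires both sides to be $\cA$-bilinear in $\gamma$, which holds precisely because $A,B$ are bilinear and $\sigma$ is a bimodule map, combined with centredness to guarantee that central tensors generate. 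Passing through Lemma \ref{lemma: antisym_vector_fields} and $\sigma = 2\Psi-1$ sidesteps this extension step entirely, so I expect the whole argument to be short.
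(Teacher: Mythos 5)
Your proof is correct, and it reaches the key identity $\product{X}{\RA{\nabla}(\omega)}{\ph(\partial)} = \product{\ph(\partial)}{\LA{\nabla}(\omega)}{X}$ by a genuinely different route than the paper. The paper proves this swap identity from scratch: it reduces to central simple tensors $\eta\ox\rho$, on which $\sigma^{-1}$ acts as the flip, invokes Lemma \ref{lem:bilinear-central} to commute the central factor $\Pairing{U}{\rho}$ past $\Pairing{\eta}{V}$, and then extends to all of $T^2_\dee(\cA)$ by centredness and bilinearity. You instead derive the swap identity $\product{A}{\gamma}{B} = \product{B}{\sigma(\gamma)}{A}$ formally from Lemma \ref{lemma: antisym_vector_fields} together with $1-\sigma = 2(1-\Psi)$ and a cancellation; your algebra here is sound, and the application with $A=X$, $B=\ph(\partial)$, $\gamma=\RA{\nabla}(\omega)$ is exactly what is needed. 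What your route buys is economy: the central-tensor computation is done once (inside Lemma \ref{lemma: antisym_vector_fields}) rather than repeated, as you correctly observe. What the paper's route buys is hypothesis hygiene, and this is the one point you should patch: Lemma \ref{lemma: antisym_vector_fields} is stated for a \emph{centred Hermitian differential calculus} in the sense of Definition \ref{def:canonicalcentred}, whose conditions 2 ($g\circ\sigma^{\mathrm{can}}=g$) and 3 (the junk condition) are \emph{not} among the hypotheses of Proposition \ref{prop: connections on bilinear vector fields}. Fortunately the proof of Lemma \ref{lemma: antisym_vector_fields} never uses those two conditions --- it needs only centredness, the flip property of $\sigma^{\mathrm{can}}$ on central tensors, $\Psi=\frac{1}{2}(1+\sigma^{\mathrm{can}})$, and Lemma \ref{lem:bilinear-central} --- so your argument survives, but a careful write-up should either note that the lemma holds under these weaker hypotheses or bypass the idempotent $\Psi$ entirely and phrase the cancellation directly in terms of $\sigma$.
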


\begin{proof}
    For all one-forms $\omega$ we have $\partial \Pairing{X}{\omega}=\partial \Pairing{\omega}{X}$ since $X$ is bilinear, so the first terms in the definitions of $\nablar_\partial$ and $\nablal_\partial$ are the same. We are then left with showing that the second terms are identical, namely
    \[
        \product{X}{\RA{\nabla}(\omega)}{\ph(\partial)} = \product{\ph(\partial)}{\LA{\nabla}(\omega)}{X}.
    \]
    By definition of a $\dagger$-bimodule connection, $\RA{\nabla} = \sigma^{-1} \circ \LA{\nabla}$. Now, for any two bilinear vector fields $U, V$ and central one-forms $\eta, \rho$,
    \[
        \product{U}{\sigma^{-1}(\eta \otimes \rho)}{V} = \product{U}{\rho \otimes \eta}{V} = \Pairing{U}{\rho} \Pairing{\eta}{V}.
    \]
    Using bilinearity of $U$ and centrality of $\rho$, Lemma \ref{lem:bilinear-central} shows that $\Pairing{U}{\rho} \in \cZ(\cA)$, and so
    \[
        \product{U}{\sigma^{-1}(\eta \otimes \rho)}{V} = \Pairing{V}{\eta} \Pairing{\rho}{U} = \product{V}{\eta \otimes \rho}{U}.
    \]
    Using that $\Omega_{\dd}^1(\cA)$ is centred we deduce that this equality is true for any two-tensor $\alpha \in T^2_{\dd}(\cA)$. Specifying to $\alpha = \RA{\nabla}(\omega)$ yields the result.
\end{proof}

As for the musical isomorphisms, when working over a centred bimodule of forms with $\sigma = \sigma^\text{can}$, we will omit arrows when evaluating $\RA{\nabla}_\partial$ or $\LA{\nabla}_\partial$ on $\LRA{\cX}(\cA)$.

\begin{lemma}
\label{lem:bi-real}
    Let  $(\RA{\nabla}, \sigma)$ be a $\dagger$-bimodule connection on $(\Omega_{\dd}^1(\cA), \bra{\cdot}\ket{\cdot}_\cA, \dagger)$, and assume that $\Omega_{\dd}^1(\cA)$ is centred so that $\sigma=\sigma^{\mathrm{can}}$. If $X$ is a real bilinear vector field and $\partial \in D$ an Hermitian derivation, then $\nabla_\partial X$ is bilinear and real.
\end{lemma}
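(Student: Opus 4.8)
The plan is to deduce both assertions directly from the two structural results already established for bilinear vector fields, Proposition \ref{prop: connections on bilinear vector fields} and Proposition \ref{prop: dagger and affine connections}, so that essentially no fresh computation is required---only careful bookkeeping of the left/right conventions and of the direction of the dagger maps.

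First I would establish bilinearity. Since $X\in\LRA{\cX}(\cA)$ is in particular both a right-linear and a left-linear vector field, both $\RA{\nabla}_\partial X\in\RA{\cX}(\cA)$ and $\LA{\nabla}_\partial X\in\LA{\cX}(\cA)$ are defined via Proposition \ref{prop:left-right-left-right}. Because the calculus is centred with $\sigma=\sigma^{\mathrm{can}}$, Proposition \ref{prop: connections on bilinear vector fields} applies and yields the equality of $\C$-linear functionals $\RA{\nabla}_\partial X=\LA{\nabla}_\partial X$ on $\Omega^1_\dee(\cA)$. The left-hand side is a right-$\cA$-linear functional and the right-hand side a left-$\cA$-linear functional; since they are the same map, this common functional is simultaneously right- and left-linear, hence lies in $\RA{\cX}(\cA)\cap\LA{\cX}(\cA)=\LRA{\cX}(\cA)$. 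This is precisely the statement that $\nabla_\partial X$ is bilinear, and it is what justifies writing $\nabla_\partial X$ without arrows.

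For reality, I would feed the hypotheses $X^\dagger=-X$ and $\partial\in D$ into Proposition \ref{prop: dagger and affine connections}, treating $X$ as an element of $\RA{\cX}(\cA)$. This gives $(\RA{\nabla}_\partial X)^\dagger=\LA{\nabla}_\partial(X^\dagger)=-\LA{\nabla}_\partial X$, where the last step uses that the connection is $\C$-linear in its module argument. Invoking Proposition \ref{prop: connections on bilinear vector fields} once more to identify $\LA{\nabla}_\partial X=\RA{\nabla}_\partial X=\nabla_\partial X$, I obtain $(\nabla_\partial X)^\dagger=-\nabla_\partial X$, which is exactly reality. Note that Hermiticity of $\partial$ is not a separate hypothesis but is already encoded in $\partial\in D=D_\C\cap\DER(\cA)^\dagger$, and it enters implicitly through Proposition \ref{prop: dagger and affine connections}.

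There is no genuine analytic or computational obstacle; the whole argument is a two-line deduction. The only point demanding care is the direction of the involution $\dagger:\LA{\cX}(\cA)\to\RA{\cX}(\cA)$ and its inverse, together with which of the conjugate connections $\RA{\nabla},\LA{\nabla}$ sits on each side of the identity in Proposition \ref{prop: dagger and affine connections}, so that the two appeals to Proposition \ref{prop: connections on bilinear vector fields} line up correctly with the single appeal to the dagger-intertwining identity. Once these conventions are pinned down, both conclusions fall out at once.
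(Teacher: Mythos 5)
Your proof is correct and follows essentially the same route as the paper's: bilinearity is obtained by noting that $\RA{\nabla}_\partial X$ and $\LA{\nabla}_\partial X$ are by construction right- and left-linear respectively and coincide by Proposition \ref{prop: connections on bilinear vector fields}, and reality follows by combining Proposition \ref{prop: dagger and affine connections} with that same identification. The extra bookkeeping you flag (directions of $\dagger$ and which conjugate connection acts on which module) is handled identically in the paper, so there is nothing to add.
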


\begin{proof}
    If $X$ is a bilinear vector field, then $\nablar_\partial X$ and $\nablal_\partial X$ are by construction respectively a right- and a left-linear vector field. Since they coincide according to Proposition \ref{prop: connections on bilinear vector fields}, then the common value $\nabla_\partial X$ is bilinear. If furthermore $X$ is real, then using Proposition \ref{prop: dagger and affine connections} we see that $(\nablar_\partial X)^\dagger = \nablal_\partial(X^\dag)$, i.e. $(\nabla_\partial X)^\dag = \nabla_\partial(X^\dag) = -\nabla_\partial X$ since $X^\dag = -X$.
\end{proof}

\begin{theorem}\label{thm: real connection calculus}
    Let $(\Omega_{\dd}^1(\cA), \dagger, \Psi, \bra{\cdot}\ket{\cdot},\sigma^{\mathrm{can}})$ be a centred Hermitian differential calculus and $(\RA{\nabla}, \sigma^{\mathrm{can}})$ be a $\dagger$-bimodule connection on $\Omega^{1}_{\mathrm{d}}(\A)$. For all $X, Y \in \ph(D)$ and $\partial \in D$,
    \[
        \pairing{\nabla_\partial X}{Y}_\cA^* = \pairing{\nabla_\partial X}{Y}_\cA.
    \]
    Therefore $(\LA{\cX}(\cA), \bra{\cdot}\ket{\cdot}_\cA, D_\ph, \LA{\nabla})$ and $(\RA{\cX}(\cA), \prescript{}{\A}{\bra{\cdot}\ket{\cdot}}, D_\ph, \RA{\nabla})$ are respectively a right and a left real connection calculus, and the inner products are strongly non-degenerate.
\end{theorem}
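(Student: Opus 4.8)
The plan is to prove the Hermitian (real) identity $\pairing{\nabla_\partial X}{Y}_\cA^* = \pairing{\nabla_\partial X}{Y}_\cA$ directly, and then observe that combined with Theorem~\ref{thm: real_metric_calc} this gives everything claimed. The strategy is to reduce the abstract inner-product identity to a manipulation of the defining pairing $\llangle\cdot|\cdot|\cdot\rrangle$, using the $\dagger$-bimodule connection hypothesis via Proposition~\ref{prop: connections on bilinear vector fields} and Lemma~\ref{lem:bi-real}, and the reality/centrality machinery assembled in Corollary~\ref{cor: symmetry inner product} and Lemma~\ref{lem:bilinear-central}.

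\textbf{Key steps.} First I would fix $X,Y\in\ph(D)$ and $\partial\in D$, so that $X,Y$ are real bilinear vector fields and $\partial$ is Hermitian. By Lemma~\ref{lem:bi-real}, $\nabla_\partial X$ is again a \emph{real bilinear} vector field. Now $Y$ is also real, so both arguments of $\pairing{\cdot}{\cdot}_\cA$ are real vector fields, and by Corollary~\ref{cor: symmetry inner product}(3) any inner product of two real vector fields is self-adjoint: $\pairing{\nabla_\partial X}{Y}_\cA^* = \pairing{\nabla_\partial X}{Y}_\cA$. This is precisely the asserted identity, so the first displayed equation is immediate once reality of $\nabla_\partial X$ is in hand.

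\textbf{Assembling the calculus structure.} With the self-adjointness established, I would verify the remaining clauses of Definition~\ref{def:aff-conn}. Proposition~\ref{prop:left-right-left-right} already provides that $\LA{\nabla}_\partial$ (resp.\ $\RA{\nabla}_\partial$) is a genuine right (resp.\ left) affine connection on $\LA{\cX}(\cA)$ (resp.\ $\RA{\cX}(\cA)$), and Theorem~\ref{thm: real_metric_calc} establishes that the underlying triples $(\LA{\cX}(\cA),\bra{\cdot}\ket{\cdot}_\cA,D_\ph)$ and $(\RA{\cX}(\cA),\prescript{}{\A}{\bra{\cdot}\ket{\cdot}},D_\ph)$ are right and left real metric calculi with strongly non-degenerate inner products. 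The self-adjointness identity just proved is exactly the extra condition $\pairing{\nabla_\partial X}{Y}=\pairing{\nabla_\partial X}{Y}^*$ (for $X,Y\in\ph(D)$) required to upgrade a real metric calculus to a \emph{real connection calculus}. Hence both quadruples are real connection calculi, and strong non-degeneracy is inherited verbatim from Theorem~\ref{thm: real_metric_calc}.

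\textbf{Main obstacle.} The crux is entirely front-loaded into showing that $\nabla_\partial X$ is real and bilinear, i.e.\ Lemma~\ref{lem:bi-real}; once that lemma is invoked the proof is essentially a one-line appeal to Corollary~\ref{cor: symmetry inner product}(3). The subtlety worth double-checking is that reality of $\nabla_\partial X$ genuinely needs the full $\dagger$-bimodule connection hypothesis ($\sigma\circ\RA{\nabla}=\LA{\nabla}$ with $\sigma=\sigma^{\mathrm{can}}$): without it, $\RA{\nabla}_\partial X$ and $\LA{\nabla}_\partial X$ need not agree, so the dagger-compatibility of Proposition~\ref{prop: dagger and affine connections} would not force $(\nabla_\partial X)^\dagger=-\nabla_\partial X$. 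I would therefore make explicit in the proof that it is the centredness (so that $\sigma=\sigma^{\mathrm{can}}$ and Proposition~\ref{prop: connections on bilinear vector fields} applies) together with $\partial\in D$ Hermitian that are doing the real work, with the final self-adjointness being a formal consequence.
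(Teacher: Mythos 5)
Your proof is correct and follows essentially the same route as the paper's: reality and bilinearity of $\nabla_\partial X$ via Lemma~\ref{lem:bi-real} (with Proposition~\ref{prop: connections on bilinear vector fields} justifying the unambiguous $\nabla_\partial$), then Corollary~\ref{cor: symmetry inner product}(3) for the self-adjointness, then Theorem~\ref{thm: real_metric_calc} to upgrade to a real connection calculus. Your added remark pinpointing that the $\dagger$-bimodule connection hypothesis with $\sigma=\sigma^{\mathrm{can}}$ is what makes $\RA{\nabla}_\partial X=\LA{\nabla}_\partial X$ (and hence reality) possible is accurate and consistent with the paper's setup.
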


\begin{proof}
    We give the proof for the right calculus. Hermitianness of $\pairing{\nabla_\partial X}{Y}_\cA$ follows from Lemma \ref{lem:bi-real}, using the fact that $\ph(D) \subset \LRA{\cX}(\cA)$ and that the inner product is symmetric on real bilinear vector fields (see Corollary \ref{cor: symmetry inner product}). Then, $(\LA{\cX}(\cA), \bra{\cdot}\ket{\cdot}_\cA, D_\ph)$ is a right real metric calculus by Theorem \ref{thm: real_metric_calc}, so by the Hermitianness of $\pairing{\nabla_\partial X}{Y}_\cA$, adding $\LA{\nabla}$ yields a right real connection calculus.
\end{proof}

\subsection{Dual Levi-Civita connections}

In this section we investigate metric compatibility and torsion of connections over centred Hermitian differential calculi. We show that the definitions of Levi-Civita connections in \cite{MRLC} and \cite{AWcurvature} coincide on these calculi, and that torsion maps are related by classical formulae.

In this section we fix $(\Omega_{\dd}^1(\cA), \dagger, \Psi, \bra{\cdot}\ket{\cdot},\sigma^\mathrm{can})$ a centred Hermitian differential calculus over a unital $*$-algebra $\A$ and $(\nablar,\nablal)$ a conjugate pair of connections over $\Omega_{\dd}^1(\cA)$.

\begin{lemma}
\label{lem:pair-prods}
	Let $U$ be a left-linear vector field on $\Omega_{\dd}^1(\cA)$, $\omega \in \Omega_{\dd}^1(\cA)$ and $\alpha \in T^2_{\dd}(\cA)$. Then
	\[
		\Pairing{\pairing{\omega}{\alpha}_{\Omega_{\dd}^1(\cA)}}{U} =  \product{(\omega^{\LA{\sharp}})^\dag}{\alpha}{U}.
	 \]
\end{lemma}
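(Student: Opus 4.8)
The plan is to check the identity on a simple tensor $\alpha = \eta\ox\rho$ and extend by $\C$-linearity, which is legitimate because both sides are linear in $\alpha$ and well-defined on balanced tensors (the mixed pairing by the Notation preceding this subsection, and the triple product by the remark following Definition \ref{def: product vector fields}).

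First I would unfold the left-hand side. By definition of the mixed pairing, $\pairing{\omega}{\eta\ox\rho}_{\Omega^1_\dee(\cA)}=\pairing{\omega}{\eta}_\cA\,\rho$ is a one-form, and pairing it against the left-linear vector field $U$ and using $\Pairing{a\zeta}{U}=a\Pairing{\zeta}{U}$ gives $\Pairing{\pairing{\omega}{\eta\ox\rho}_{\Omega^1_\dee(\cA)}}{U}=\pairing{\omega}{\eta}_\cA\,\Pairing{\rho}{U}$.

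Next I would unfold the right-hand side. By Definition \ref{def: product vector fields}, $\product{(\omega^{\LA{\sharp}})^\dag}{\eta\ox\rho}{U}=\Pairing{(\omega^{\LA{\sharp}})^\dag}{\eta}\,\Pairing{\rho}{U}$, so it suffices to prove the scalar identity $\Pairing{(\omega^{\LA{\sharp}})^\dag}{\eta}=\pairing{\omega}{\eta}_\cA$. This is the crux, and I would obtain it by chaining three facts: the definition \eqref{eq:pair-dag} of the dagger on vector fields gives $\Pairing{(\omega^{\LA{\sharp}})^\dag}{\eta}=\Pairing{\eta^\dag}{\omega^{\LA{\sharp}}}^{*}$; the definition of $\LA{\sharp}$ together with $\dag^2=\ID$ gives $\Pairing{\eta^\dag}{\omega^{\LA{\sharp}}}=\pairing{(\eta^\dag)^\dag}{\omega}_\cA=\pairing{\eta}{\omega}_\cA$; and Hermitian symmetry (condition 2 of Definition \ref{defn:first-order}) gives $\pairing{\eta}{\omega}_\cA^{*}=\pairing{\omega}{\eta}_\cA$. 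Substituting back, both sides equal $\pairing{\omega}{\eta}_\cA\,\Pairing{\rho}{U}$.

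The main difficulty is purely bookkeeping: one must keep track of which module slot each vector field occupies — the right-linear $(\omega^{\LA{\sharp}})^\dag$ goes in the first argument of the triple product and the left-linear $U$ in the last — and apply the dagger definition with the correct complex conjugation and the involutivity of $\dag$. Notably, neither centredness nor the second-order data $(\Psi,\sigma^{\mathrm{can}})$ is used; only the first-order Hermitian structure and the left musical map $\LA{\sharp}$ enter.
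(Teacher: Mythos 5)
Your proposal is correct and follows essentially the same route as the paper: reduce to a simple tensor $\alpha=\eta\ox\rho$, unfold both sides via the definitions of the mixed pairing and the triple product, and match them through the identity $\Pairing{(\omega^{\LA{\sharp}})^\dag}{\eta}=\pairing{\omega}{\eta}_\cA$. The only difference is that the paper asserts this last identity in a single step, whereas you verify it explicitly from \eqref{eq:pair-dag}, the definition of $\LA{\sharp}$, and Hermitian symmetry — a worthwhile detail, but the same argument.
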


\begin{proof}
    We prove the statement for a simple tensor $\alpha = \eta \otimes \rho$ and extend by $\C$-linearity. Recall that by definition, $\pairing{\omega}{\eta \ox \rho}_{\Omega_\dee^1(\cA)} = \pairing{\omega}{\eta}_\cA \rho$. So, by left-$\cA$-linearity of $U$,
    \[
    	\Pairing{\pairing{\omega}{\eta \ox_\cA \rho}_{\Omega_{\dd}^1(\cA)} }{U}= \pairing{\omega}{\eta}_\cA \Pairing{\rho}{U} = \Pairing{(\omega^{\sharpl})^\dag}{\eta} \Pairing{\rho}{U} = \product{(\omega^{\sharpl})^\dag}{\eta \ox \rho}{U}.\qedhere
    \]
\end{proof}

\begin{proposition}\label{prop: hermitian connections}
	Suppose the conjugate pair $(\nablar,\nablal)$ is Hermitian. Then for any Hermitian derivation $\partial \in D$, the affine connections $\LA{\nabla}_\partial$ and $\RA{\nabla}_\partial$ are metric, i.e.
	\[
	\begin{split}
	    &\partial \pairing{X}{Y}_\cA = \pairing{\LA{\nabla}_\partial X}{Y}_\cA + \pairing{X}{\LA{\nabla}_\partial Y}_\cA \\
	    &\partial \prescript{}{\cA}{\pairing{U}{V}} = \prescript{}{\cA}{\pairing{\RA{\nabla}_\partial U}{V}} + \prescript{}{\cA}{\pairing{U}{\RA{\nabla}_\partial Y}},
	\end{split}
	\]
	for all $X, Y \in \LA{\cX}(\cA)$ and $U, V \in \RA{\cX}(\cA)$.
\end{proposition}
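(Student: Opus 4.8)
The plan is to transport the statement to one-forms via the musical isomorphisms and then invoke the Hermitian property of the connection on $\Omega^1_\dee(\cA)$. I treat the right-linear case (for $U,V\in\RA{\cX}(\cA)$ and $\RA{\nabla}$); the left-linear case is identical with left and right interchanged, or follows by conjugating with $\dagger$ using Proposition \ref{prop: dagger and affine connections}. Write $U=\omega^{\RA{\sharp}}$ and $V=\rho^{\RA{\sharp}}$, so $\omega=U^{\RA{\flat}}$ and $\rho=V^{\RA{\flat}}$. By \eqref{eq:vecfield-ips} and the defining formula for $\RA{\sharp}$ this gives the convenient rewriting $\prescript{}{\cA}{\pairing{U}{V}}=\Pairing{U}{\rho^\dagger}=\pairing{\omega^\dagger}{\rho^\dagger}_\cA$, and since $\partial a=\Pairing{\ph(\partial)}{\dee a}$ by Proposition \ref{def: def_map_phi}, the left-hand side becomes $\partial\,\prescript{}{\cA}{\pairing{U}{V}}=\Pairing{\ph(\partial)}{\dee\pairing{\omega^\dagger}{\rho^\dagger}_\cA}$.

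First I would expand the two terms on the right using the dual-connection formula of Proposition \ref{prop:left-right-left-right}. One term comes out directly as
\[
\prescript{}{\cA}{\pairing{\RA{\nabla}_\partial U}{V}}=\Pairing{\RA{\nabla}_\partial U}{\rho^\dagger}=\partial\,\prescript{}{\cA}{\pairing{U}{V}}-\product{U}{\RA{\nabla}(\rho^\dagger)}{\ph(\partial)}.
\]
For the other I would apply the same formula to $\prescript{}{\cA}{\pairing{\RA{\nabla}_\partial V}{U}}$ and take adjoints, using that $\partial$ is Hermitian (so $\partial(a)^*=\partial(a^*)$) to restore $\partial\,\prescript{}{\cA}{\pairing{U}{V}}$; this yields $\prescript{}{\cA}{\pairing{U}{\RA{\nabla}_\partial V}}=\partial\,\prescript{}{\cA}{\pairing{U}{V}}-\product{V}{\RA{\nabla}(\omega^\dagger)}{\ph(\partial)}^*$. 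Adding the two, metric compatibility reduces to the single identity
\[
\product{U}{\RA{\nabla}(\rho^\dagger)}{\ph(\partial)}+\product{V}{\RA{\nabla}(\omega^\dagger)}{\ph(\partial)}^*=\partial\,\prescript{}{\cA}{\pairing{U}{V}}.
\]

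To prove this I would feed in the Hermitian property of $\RA{\nabla}$ applied to $\pairing{\omega^\dagger}{\rho^\dagger}_\cA$, which writes $\dee\pairing{\omega^\dagger}{\rho^\dagger}_\cA$ as the difference $\pairing{\omega^\dagger}{\RA{\nabla}(\rho^\dagger)}_{\Omega^1_\dee(\cA)}-\pairing{\RA{\nabla}(\omega^\dagger)}{\rho^\dagger}_{\Omega^1_\dee(\cA)}$ of mixed-order pairings; pairing with $\ph(\partial)$ then produces exactly two terms to match with the two products above. Evaluating on a simple tensor $\RA{\nabla}(\rho^\dagger)=\eta\ox\tau$ (and symmetrically for $\omega$), each match collapses to a scalar statement about $\ph(\partial)$, namely $\Pairing{\tau}{\ph(\partial)}=\Pairing{\ph(\partial)}{\tau}$ for the first and $\Pairing{\tau}{\ph(\partial)}^*=-\Pairing{\ph(\partial)}{\tau^\dagger}$ for the second.

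The first identity holds because $\ph(\partial)\in\LRA{\cX}(\cA)$ is bilinear, so both pairings equal the single value $\ph(\partial)(\tau)$; this is where centredness enters, since it is precisely Corollary \ref{cor:centred-dual} that lets $\ph(\partial)$ be represented by one central form under both $\RA{\sharp}$ and $\LA{\sharp}$. The second identity is where Hermiticity of $\partial$ is essential: by Proposition \ref{prop:real-der} the field $\ph(\partial)$ is real, so the associated central form $\nu$ satisfies $\nu^\dagger=-\nu$, and this sign is exactly what reconciles the adjoint produced by the $*$ with the $\dagger$ on $\tau$. I expect the main obstacle to be purely organisational: keeping the daggers, the adjoints and the left/right pairing conventions aligned so that the tensor factors line up, and in particular selecting the form of the Hermitian property (for $\RA{\nabla}$, on the daggered one-forms) that produces the mixed pairings in the order dictated by the dual-connection products. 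Once the only non-formal input — that $\ph(\partial)$ is a real bilinear vector field — is isolated, the remaining steps are the routine manipulations indicated above.
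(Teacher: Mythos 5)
Your proof is correct and follows essentially the same route as the paper's: both express the vector-field inner products through the musical duality, expand via the dual-connection formula of Proposition \ref{prop:left-right-left-right}, and reduce metric compatibility to the Hermitian property of $\RA{\nabla}$ paired against $\ph(\partial)$, using bilinearity of $\ph(\partial)$ and the realness $\ph(\partial)^\dagger=-\ph(\partial)$ coming from Hermiticity of $\partial$; you simply work on the mirrored (right-linear) side and verify the two matching identities directly on simple tensors instead of quoting Lemma \ref{lem:pair-prods}. One small correction: your first identity $\Pairing{\ph(\partial)}{\tau}=\Pairing{\tau}{\ph(\partial)}$ needs neither centredness nor Corollary \ref{cor:centred-dual} --- it holds because both sides denote the single value $\ph(\partial)(\tau)$ of the bilinear map $\ph(\partial)$, which is exactly the justification the paper itself gives at the corresponding step.
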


\begin{proof}
    We show the computation for left-linear vector fields. Let $X, Y \in \LA{\cX}(\cA)$. Then
    \[
    \begin{split}
        \pairing{\LA{\nabla}_\partial X}{Y}_\cA + \pairing{X}{\LA{\nabla}_\partial Y}_\cA &= \Pairing{( \LA{\nabla}_\partial X )^\dagger}{Y^{\LA{\flat}}} + \Pairing{( X^{\LA{\flat}} )^\dagger}{\LA{\nabla}_\partial Y} \\
        &= \Pairing{(Y^{\LA{\flat}})^\dagger}{\LA{\nabla}_\partial X}^{*} + \Pairing{(X^{\LA{\flat}})^\dagger}{\LA{\nabla}_\partial Y} \\
        &= \partial \Pairing{(Y^{\LA{\flat}})^\dagger}{X}^* - \product{\ph(\partial)}{\LA{\nabla}\big( (Y^{\LA{\flat}})^\dagger \big)}{X}^* \\
        &\quad + \partial \Pairing{(X^{\LA{\flat}})^\dagger}{Y} - \product{\ph(\partial)}{\LA{\nabla}\big( (X^{\LA{\flat}})^\dagger \big)}{Y} \\
        &= 2 \partial \pairing{X}{Y}_\cA - \product{X^\dag}{\RA{\nabla} ( Y^{\LA{\flat}} )}{\ph(\partial)} - \product{Y^\dagger}{\RA{\nabla} ( X^{\LA{\flat}} )}{\ph(\partial)}^*.
    \end{split}
    \]
    The first and last equalities are a consequence of the computation
    \[
        \Pairing{U^\dagger}{V^{\LA{\flat}}}
        = \Pairing{\dagger \circ \LA{\sharp} \circ \LA{\flat} (U)}{V^{\LA{\flat}}}
        = \Pairing{\RA{\sharp} \circ \dagger \circ \LA{\flat} (U)}{V^{\LA{\flat}}}
        = \pairing{U^{\LA{\flat}}}{V^{\LA{\flat}}}_\cA = \pairing{U}{V}_\cA
    \]
    for $U, V \in \LA{\cX}(\A)$. Using Lemma \ref{lem:pair-prods}, we see that
    \[
        \product{X^\dagger}{\RA{\nabla} ( Y^{\LA{\flat}} )}{\ph(\partial)} = \Pairing{\pairing{X^{\LA{\flat}}}{\RA{\nabla} ( Y^{\LA{\flat}} )}_{\Omega_{\dd}^1(\cA)}}{\ph(\partial)}= \Pairing{\ph(\partial)}{\pairing{X^{\LA{\flat}}}{\RA{\nabla} ( Y^{\LA{\flat}} )}_{\Omega_{\dd}^1(\cA)}}
    \]
        since $\ph(\partial)$ is bilinear. Hence also, using $\ph(\partial)^\dag = -\ph(\partial)$,
    \[
        \product{Y^\dagger}{\RA{\nabla} ( X^{\LA{\flat}} )}{\ph(\partial)}^* = -\Pairing{\ph(\partial)}{\pairing{X^{\LA{\flat}}}{\RA{\nabla} ( Y^{\LA{\flat}} )}^\dagger_{\Omega_{\dd}^1(\cA)}}.
    \]
    Since $\pairing{\omega}{\eta \otimes \rho}_{\Omega_{\dd}^1(\cA)}^\dagger = \rho^\dagger \pairing{\omega}{\eta}^*_\cA = \pairing{\eta \otimes \rho}{\omega}_{\Omega_{\dd}^1(\cA)}$, we deduce
    \[
        \pairing{\LA{\nabla}_\partial X}{Y}_\cA + \pairing{X}{\LA{\nabla}_\partial Y}_\cA = 2 \partial \pairing{X}{Y}_\cA - \Pairing{\ph(\partial)}{\big[ \pairing{X^{\LA{\flat}}}{\RA{\nabla} ( Y^{\LA{\flat}} )}_{\Omega_{\dd}^1(\cA)} - \pairing{\RA{\nabla} ( X^{\LA{\flat}} )}{Y^{\LA{\flat}}}_{\Omega_{\dd}^1(\cA)} \big]}.
    \]
    Since $\RA{\nabla}$ is Hermitian,
    \[
        \pairing{X^{\LA{\flat}}}{\RA{\nabla} ( Y^{\LA{\flat}} )}_{\Omega_{\dd}^1(\cA)} - \pairing{\RA{\nabla} ( X^{\LA{\flat}} )}{Y^{\LA{\flat}}}_{\Omega_{\dd}^1(\cA)} = \dd \pairing{X^{\LA{\flat}}}{Y^{\LA{\flat}}}_\cA = \dd \pairing{X}{Y}_\cA,
    \]
    so applying $\ph(\partial)$ yields
    \[
        \pairing{\LA{\nabla}_\partial X}{Y}_\cA + \pairing{X}{\LA{\nabla}_\partial Y}_\cA = 2 \partial \pairing{X}{Y}_\cA - \Pairing{\ph(\partial)}{\dd \pairing{X}{Y}_\cA} = \partial \pairing{X}{Y}_\cA.\qedhere
    \]
\end{proof}

\begin{proposition}\label{prop: torsion coincide}
Let $(\nablar,\nablal)$ be a conjugate pair of connections on $\Omega^1_\dee(\A)$.
	The torsion map on derivations from Definition \ref{defn:torsion-metric-derivations} is given by
    \[
    \begin{split}
        \Pairing{\tau(\partial_1, \partial_2)}{\omega} &= 2 \product{\ph(\partial_2)}{\big( (1 - \Psi) \circ \LA{\nabla} - \dd_\Psi \big) (\omega)}{\ph(\partial_1)} \\
        &= 2 \product{\ph(\partial_1)}{\big( (1 - \Psi) \circ \RA{\nabla} + \dd_\Psi \big) (\omega)}{\ph(\partial_2)},
    \end{split}
    \]
    for all $\partial_1, \partial_2 \in D$ and $\omega \in \Omega_\dee^1(\cA)$.
\end{proposition}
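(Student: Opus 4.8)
The plan is to evaluate $\Pairing{\tau(\partial_1,\partial_2)}{\omega}$ directly, writing $\omega=\sum_i a_i\,\dd b_i$ with $a_i,b_i\in\A$ and invoking Theorem \ref{thm: Lie algebra} to know that $[\partial_1,\partial_2]\in D$, so that all three terms of $\tau$ lie in the image of $\ph$ and the pairings below make sense. Recalling the dualisation formula of Proposition \ref{prop:left-right-left-right}, I would insert
\[
\Pairing{\RA{\nabla}_\partial X}{\omega}=\partial\Pairing{X}{\omega}-\product{X}{\RA{\nabla}(\omega)}{\ph(\partial)}
\]
into each of $\Pairing{\RA{\nabla}_{\partial_1}\ph(\partial_2)}{\omega}$ and $\Pairing{\RA{\nabla}_{\partial_2}\ph(\partial_1)}{\omega}$, and split the outcome into a \emph{derivation part} (the terms with $\partial_k$ acting on the scalar pairings, together with $-\Pairing{\ph([\partial_1,\partial_2])}{\omega}$) and a \emph{connection part} (the two $\product{\cdot}{\RA{\nabla}(\omega)}{\cdot}$ terms).

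For the derivation part I would expand $\partial_1\Pairing{\ph(\partial_2)}{\omega}=\partial_1\big(\sum_i a_i\,\partial_2 b_i\big)$ by the Leibniz rule, and likewise the symmetric term. The second-order contributions $\sum_i a_i(\partial_1\partial_2-\partial_2\partial_1)b_i$ then cancel precisely against $-\Pairing{\ph([\partial_1,\partial_2])}{\omega}=-\sum_i a_i[\partial_1,\partial_2]b_i$, leaving the antisymmetric first-order expression $\sum_i(\partial_1 a_i)(\partial_2 b_i)-(\partial_2 a_i)(\partial_1 b_i)$. Writing $\partial_k a_i=\Pairing{\ph(\partial_k)}{\dd a_i}$ and $\partial_k b_i=\Pairing{\dd b_i}{\ph(\partial_k)}$ (Proposition \ref{def: def_map_phi} and bilinearity), this is exactly $\product{\ph(\partial_1)}{\xi}{\ph(\partial_2)}-\product{\ph(\partial_2)}{\xi}{\ph(\partial_1)}$ with $\xi:=\sum_i\dd a_i\ox\dd b_i=\pi_\dee\circ\delta\circ\pi_\dee^{-1}(\omega)$, so that Lemma \ref{lemma: antisym_vector_fields} converts it into $2\product{\ph(\partial_1)}{(1-\Psi)\xi}{\ph(\partial_2)}=2\product{\ph(\partial_1)}{\d(\omega)}{\ph(\partial_2)}$, using the definition \eqref{eq: second-order-diff} of $\d$.

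The connection part is $\product{\ph(\partial_1)}{\RA{\nabla}(\omega)}{\ph(\partial_2)}-\product{\ph(\partial_2)}{\RA{\nabla}(\omega)}{\ph(\partial_1)}$, which Lemma \ref{lemma: antisym_vector_fields} turns into $2\product{\ph(\partial_1)}{(1-\Psi)\RA{\nabla}(\omega)}{\ph(\partial_2)}$; summing the two parts yields the second (right-connection) formula. The first formula follows from the identical computation run with $\Pairing{\omega}{\LA{\nabla}_\partial X}=\partial\Pairing{\omega}{X}-\product{\ph(\partial)}{\LA{\nabla}(\omega)}{X}$, whose connection part becomes $2\product{\ph(\partial_2)}{(1-\Psi)\LA{\nabla}(\omega)}{\ph(\partial_1)}$ while the derivation part is unchanged. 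To recombine into the stated form I would use that $\d(\omega)\in\IM(1-\Psi)=\KER\Psi$ is antisymmetric for the braiding, since $\sigma=2\Psi-1$ acts as $-1$ there; hence $\product{\ph(\partial_1)}{\d(\omega)}{\ph(\partial_2)}=-\product{\ph(\partial_2)}{\d(\omega)}{\ph(\partial_1)}$, which flips the sign of the $\d$-term and produces the first displayed formula. I expect the main obstacle to be the bookkeeping in the Leibniz expansion — in particular securing the clean cancellation of the second-order derivatives against the commutator term, and correctly lifting $\omega$ through $\pi_\dee^{-1}$ to identify $\xi$ as a representative of $\d(\omega)$ (the choice of preimage being immaterial precisely because $JT^2_\dee\subset\IM\Psi$, so that $(1-\Psi)$ annihilates the junk ambiguity).
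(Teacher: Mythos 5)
Your proof is correct and follows essentially the same route as the paper's: both rest on the dualisation formulas of Proposition \ref{prop:left-right-left-right}, the cancellation of the second-order derivative terms against the commutator term (legitimate because Theorem \ref{thm: Lie algebra} gives $[\partial_1,\partial_2]\in D$), and Lemma \ref{lemma: antisym_vector_fields} together with $JT^2_\dee\subset\IM\Psi$ to identify the surviving terms with $(1-\Psi)\circ\nabla$ and $\d$. The only differences are organisational: the paper reduces to exact forms via the automatic one-sided linearity of $\tau$ and applies the lemma once to the combined tensor $\LA{\nabla}(\omega)-\sum_i\dd a_i\ox\dd b_i$, whereas you expand the Leibniz bookkeeping directly for general $\omega$, antisymmetrise the derivation and connection parts separately, and then flip the $\d$-term using that $\Psi\,\d(\omega)=0$ (so that $\product{X}{\d\omega}{Y}=-\product{Y}{\d\omega}{X}$, which indeed follows from the same lemma) — a valid extra step; also note that your remark that all three terms of $\tau$ lie in the image of $\ph$ is only needed (and only true in general) for the commutator term, but nothing in your argument actually uses bilinearity of the two connection terms, so this overstatement is harmless.
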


\begin{proof}
   Recall that by definition, if $\partial_1, \partial_2 \in D$ are two Hermitian derivations then
    \[
    	\tau(\partial_1, \partial_2) = \nabla_{\partial_1} \big( \ph(\partial_2) \big) - \nabla_{\partial_2} \big( \ph(\partial_2) \big) - \ph \big( [\partial_1, \partial_2] \big).
    \]
    In fact, one might define a right and left torsion map, using the right and the left affine connections, but these coincide on the image of $\ph$.
    Hence, for all $\omega \in \Omega_\dee^1(\cA)$, using the definition of the left affine connection we have
    \begin{alignat*}{2}
        \Pairing{\tau(\partial_1, \partial_2)}{\omega} &=\ && \partial_1 \Pairing{\ph(\partial_2)}{\omega} - \partial_2 \Pairing{\ph(\partial_1)}{\omega} - \Pairing{ \ph \big( [\partial_1, \partial_2] \big) }{\omega} \\
        & && - \product{\ph(\partial_1)}{\LA{\nabla}(\omega)}{\ph(\partial_2)} + \product{\ph(\partial_2)}{\LA{\nabla}(\omega)}{\ph(\partial_1)}.
    \end{alignat*}
    Since $\tau(\partial_1, \partial_2)$ is clearly $\cA$-bilinear, it is determined by its action on exact forms. Thus, if $b \in \cA$,
    \begin{alignat*}{2}
        \pairing{\tau(\partial_1, \partial_2)}{\dd b} &=\ && \partial_1(\partial_2 b) - \partial_2(\partial_1 b) - [\partial_1, \partial_2]b \\
        & && - \product{\ph(\partial_1)}{\LA{\nabla}(\dd b)}{\ph(\partial_2)} + \product{\ph(\partial_2)}{\LA{\nabla}(\dd b)}{\ph(\partial_1)} \\
        &= && \product{\ph(\partial_2)}{\LA{\nabla}(\dd b)}{\ph(\partial_1)} - \product{\ph(\partial_1)}{\LA{\nabla}(\dd b)}{\ph(\partial_2)}.
    \end{alignat*}
    Now consider a generic one-form $\omega = \sum_i a_i \dd b_i$. By definition  we have $a_i \LA{\nabla}(\dd b_i) = \LA{\nabla}(a_i \dd b_i) - \dd a_i \otimes \dd b_i$. So, using left-linearity of $\tau(\partial_1, \partial_2)$ we get
    \[
        \Pairing{\tau(\partial_1, \partial_2)}{\omega} = \product{\ph(\partial_2)}{\LA{\nabla}(\omega) - \dd a_i \otimes \dd b_i}{\ph(\partial_1)} - \product{\ph(\partial_1)}{\LA{\nabla}(\omega) - \dd a_i \otimes \dd b_i}{\ph(\partial_2)}.
    \]
    Lemma \ref{lemma: antisym_vector_fields} then yields the first formula 
    \[
        \Pairing{\tau(\partial_1, \partial_2)}{\omega} = 2 \product{\ph(\partial_2)}{(1 - \Psi) \circ \LA{\nabla} (\omega) - \dd_\Psi \omega}{\ph(\partial_1)}.
    \]
    For the second formula, we proceed in a similar manner: using the definition of the right affine connection,
    \begin{alignat*}{2}
        \Pairing{\tau(\partial_1, \partial_2)}{\omega} &=\ && \partial_1 \Pairing{\ph(\partial_2)}{\omega} - \partial_2 \Pairing{\ph(\partial_1)}{\omega} - \Pairing{ \ph \big( [\partial_1, \partial_2] \big) }{\omega} \\
        & && - \product{\ph(\partial_2)}{\RA{\nabla}(\omega)}{\ph(\partial_1)} + \product{\ph(\partial_1)}{\RA{\nabla}(\omega)}{\ph(\partial_2)}.
    \end{alignat*}
    On an exact form $\omega = \dee b$, the first three terms compensate:
    \[
       \Pairing{\tau(\partial_1, \partial_2)}{\dee b} = \product{\ph(\partial_1)}{\RA{\nabla}(\dee b)}{\ph(\partial_2)} - \product{\ph(\partial_2)}{\RA{\nabla}(\dee b)}{\ph(\partial_1)}.
    \]
    For a generic one-form we write $\omega = \sum a_i \dee b_i = \sum \dee(a_i b_i) - (\dee a_i) b_i$. Then $\nablar\big[ \dee(a_i b_i) \big] - \nablar(\dee a_i) b_i = \nablar\big[ \dee(a_i b_i) \big] - \nablar\big[ (\dee a_i) b_i \big] + \dee a_i \ox \dee b_i = \nablar(\omega) + \dee a_i \ox \dee b_i$. Hence, using bilinearity of $\tau(\partial_1, \partial_2)$,
    \[
        \Pairing{\tau(\partial_1, \partial_2)}{\omega} = \product{\ph(\partial_1)}{\RA{\nabla}(\omega) + \dee a_i \ox \dee b_i}{\ph(\partial_2)} - \product{\ph(\partial_2)}{\RA{\nabla}(\omega) + \dee a_i \ox \dee b_i}{\ph(\partial_1)}
    \]
    and we conclude as above using Lemma \ref{lemma: antisym_vector_fields}.
\end{proof}

\begin{theorem}\label{thm: pseudo-Riemannian_calculus}
	If $(\RA{\nabla}, \sigma)$ is an Hermitian and torsion-free $\sigma$-$\dag$-bimodule connection on $\Omega^{1}_{d}(\A)$ then the data $(\LA{\cX}(\cA), \bra{\cdot}\ket{\cdot}_\cA, D_\ph, \LA{\nabla})$ defines a right real pseudo-Riemannian calculus and $(\RA{\cX}(\cA), \prescript{}{\A}{\bra{\cdot}\ket{\cdot}}, D_\ph, \RA{\nabla})$ defines a left real pseudo-Riemannian calculus, and the inner products are strongly non-degenerate.
\end{theorem}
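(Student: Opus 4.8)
The statement is an assembly of the results established earlier in this section, so the plan is to verify each clause of Definition \ref{defn:torsion-metric-derivations} in turn, treating the right-module (left-linear) case throughout; the left-module case is entirely symmetric, using $\RA{\nabla}$ and $\prescript{}{\A}{\bra{\cdot}\ket{\cdot}}$ in place of $\LA{\nabla}$ and $\bra{\cdot}\ket{\cdot}_\cA$. Concretely, I must check that $(\LA{\cX}(\cA), \bra{\cdot}\ket{\cdot}_\cA, D_\ph, \LA{\nabla})$ is (i) a right real connection calculus with strongly non-degenerate inner product, (ii) metric, (iii) torsion-free, and (iv) real in the sense that $\pairing{\LA{\nabla}_{\partial_1}\LA{\nabla}_{\partial_2}X}{Y}_\cA$ is Hermitian for all $X,Y\in\ph(D)$ and $\partial_1,\partial_2\in D$.

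Clause (i) is precisely the content of Theorem \ref{thm: real connection calculus}, which also records strong non-degeneracy of the induced inner product, so no additional argument is needed there. For clause (ii), I would first observe that, since $\RA{\nabla}$ is Hermitian, its conjugate $\LA{\nabla}=-\dag\circ\RA{\nabla}\circ\dag$ is Hermitian as well; thus the hypothesis of Proposition \ref{prop: hermitian connections} is satisfied, and that proposition gives that the affine connection $\LA{\nabla}_\partial$ is metric for every Hermitian $\partial\in D$.

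Clause (iii) is where the torsion-freeness hypothesis on the form connection enters. By Definition \ref{defn:torsion on forms}, torsion-freeness of $(\RA{\nabla},\sigma)$ means $(1-\Psi)\circ\RA{\nabla}+\d=0$. Substituting this into the second (right) torsion formula of Proposition \ref{prop: torsion coincide} yields
\[
\Pairing{\tau(\partial_1,\partial_2)}{\omega}=2\product{\ph(\partial_1)}{\big((1-\Psi)\circ\RA{\nabla}+\d\big)(\omega)}{\ph(\partial_2)}=0
\]
for all $\omega\in\Omega^1_\dee(\A)$ and $\partial_1,\partial_2\in D$. Since the pairing between vector fields and one-forms is non-degenerate (in fact strongly non-degenerate by clause (i)), this forces $\tau(\partial_1,\partial_2)=0$, so the calculus is torsion-free. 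Together with (i) and (ii) this already produces a right pseudo-Riemannian calculus.

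It remains to upgrade to a \emph{real} calculus, clause (iv), and this is the only place needing a short iteration rather than a direct citation. For $X\in\ph(D)\subset\LRA{\cX}(\cA)^\dag$ and a Hermitian $\partial_2\in D$, Lemma \ref{lem:bi-real} shows that $\LA{\nabla}_{\partial_2}X$ is again a real bilinear vector field; applying the same lemma once more with $\partial_1$ shows $\LA{\nabla}_{\partial_1}\LA{\nabla}_{\partial_2}X$ is real and bilinear as well. Since $Y\in\ph(D)$ is likewise real and bilinear, Corollary \ref{cor: symmetry inner product}(3) gives $\pairing{\LA{\nabla}_{\partial_1}\LA{\nabla}_{\partial_2}X}{Y}_\cA^*=\pairing{\LA{\nabla}_{\partial_1}\LA{\nabla}_{\partial_2}X}{Y}_\cA$, which is exactly the required reality condition. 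I expect the only genuinely delicate point to be bookkeeping rather than substance: one must keep straight that torsion-freeness of the \emph{right} form connection matches the \emph{second} torsion formula of Proposition \ref{prop: torsion coincide}, and that Lemma \ref{lem:bi-real} applies precisely because $\ph(D)$ lands in the real bilinear vector fields $\LRA{\cX}(\cA)^\dag$, keeping the iterated covariant derivatives inside the range where Corollary \ref{cor: symmetry inner product} is valid.
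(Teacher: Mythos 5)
Your proposal is correct and follows essentially the same route as the paper's own proof: Theorem \ref{thm: real connection calculus} for the real connection calculus and strong non-degeneracy, Proposition \ref{prop: hermitian connections} for metricity, Proposition \ref{prop: torsion coincide} combined with $(1-\Psi)\circ\RA{\nabla}=-\d$ for torsion-freeness, and Lemma \ref{lem:bi-real} with Corollary \ref{cor: symmetry inner product} for the reality condition on iterated covariant derivatives. Your write-up is merely more explicit in two harmless respects (the substitution into the second torsion formula, and the double application of Lemma \ref{lem:bi-real}); note also that the appeal to non-degeneracy in clause (iii) is not even needed, since $\tau(\partial_1,\partial_2)$ is by definition a homomorphism on one-forms and hence vanishes once it pairs to zero with every $\omega$.
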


\begin{proof}
    We show the proof for the right calculus. For $(\RA{\nabla}, \sigma)$ Hermitian and torsion-free we have $(1 - \Psi)\RA{\nabla} = -\dd_\Psi$ (see \ref{defn:torsion on forms}). It follows that $\nablal_\partial$ is torsion-free by Proposition \ref{prop: torsion coincide}, and it is metric by Proposition \ref{prop: hermitian connections}. By Theorem \ref{thm: real connection calculus}, $(\LA{\cX}(\cA), \bra{\cdot}\ket{\cdot}_\cA, D_\ph, \LA{\nabla})$ is a real connection calculus. Therefore it satisfies the conditions of a pseudo-Riemannian calculus.

    To prove we have a \emph{real} pseudo-Riemannian calculus, we only have left to show that $\pairing{\nabla_{\partial_1} \nabla_{\partial_2} X}{Y}_\cA^* = \pairing{\nabla_{\partial_1} \nabla_{\partial_2} X}{Y}_\cA$ for any two Hermitian connections $\partial_1, \partial_2$ and $X, Y \in \ph(D)$. The proof of this is identical to that of Theorem \ref{thm: real connection calculus}: $X, Y$ are real bilinear vector fields and $\nabla_\partial$ preserves $\LRA{\cX}(\A)^\dag$, so $\nabla_{\partial_1} \nabla_{\partial_2} X$ is also real bilinear. The property then follows from symmetry of the inner product on $\LRA{\cX}(\A)^\dag$.
\end{proof}
\begin{corollary}
\label{cor:unique}
    If $(\RA{\nabla}, \sigma)$ and $(\RA{\nabla'}, \sigma)$ are two Hermitian and torsion-free $\sigma$-$\dagger$-bimodule connections on $\Omega_{\dd}^1(\cA)$, then $\RA{\nabla} = \RA{\nabla'}$.
\end{corollary}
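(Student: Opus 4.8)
The plan is to reduce this uniqueness statement, which is phrased for connections on one-forms, to the uniqueness of pseudo-Riemannian calculi on vector fields (Theorem \ref{thm: uniqueness vector fields}), and then to invert the dualisation procedure of Section \ref{subsec:dualising}.

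First I would pass to the conjugate left connections $\LA\nabla = -\dag\circ\RA\nabla\circ\dag$ and $\LA{\nabla'} = -\dag\circ\RA{\nabla'}\circ\dag$. By Theorem \ref{thm: pseudo-Riemannian_calculus}, each of these induces, through Proposition \ref{prop:left-right-left-right}, a right real pseudo-Riemannian calculus, and crucially both are built on the \emph{same} underlying right real metric calculus $(\LA\cX(\cA), \bra{\cdot}\ket{\cdot}_\cA, D_\ph)$, since the metric and the pair $D_\ph$ depend only on the Hermitian structure and not on the choice of connection. Theorem \ref{thm: uniqueness vector fields} then immediately forces the induced affine connections to agree: $\LA\nabla_\partial X = \LA{\nabla'}_\partial X$ for every $\partial\in D$ and $X\in\LA\cX(\cA)$.

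It remains to climb back from equality of the affine connections to equality of the form connections, after which equality of the original right connections follows by applying $-\dag\circ(\cdot)\circ\dag$. Set $\Theta := \LA\nabla - \LA{\nabla'}\colon \Omega^1_\dee(\cA)\to T^2_\dee(\cA)$, a right-linear map. Comparing the defining formula $\Pairing{\omega}{\LA\nabla_\partial X} = \partial\Pairing{\omega}{X} - \product{\ph(\partial)}{\LA\nabla(\omega)}{X}$ for the two connections, the equality above gives
\[
\product{\ph(\partial)}{\Theta(\omega)}{X} = 0 \qquad \text{for all } \partial\in D,\ X\in\LA\cX(\cA),\ \omega\in\Omega^1_\dee(\cA).
\]
Fixing $\omega$ and writing $\alpha := \Theta(\omega)$, for a right-linear $U$ I set $\gamma_U := (U\ox\ID)(\alpha)\in\Omega^1_\dee(\cA)$, which is well defined on balanced tensors, is $\C$-linear and left-linear in $U$, and satisfies $\product{U}{\alpha}{X} = \Pairing{\gamma_U}{X}$. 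Taking $U = \ph(\partial)$ and using that the pairing $\LA\cX(\cA)\times\Omega^1_\dee(\cA)\to\cA$ is non-degenerate in its second argument (strong non-degeneracy), I obtain $\gamma_{\ph(\partial)} = 0$ for all $\partial\in D$. To propagate this to the full dual I use that $\ph(D) = \LRA\cX(\cA)^\dag$ (Proposition \ref{prop:real-der}) and that every bilinear vector field decomposes as $X = \tfrac12(X - X^\dag) - \tfrac{i}{2}\,i(X + X^\dag)$, a $\C$-combination of real bilinear vector fields; hence $\C$-linearity gives $\gamma_U = 0$ for all $U\in\LRA\cX(\cA)$, and then left-linearity together with Lemma \ref{lem:der-gen} (bilinear vector fields generate $\RA\cX(\cA)$ as a left module) yields $\gamma_U = 0$ for all $U\in\RA\cX(\cA)$. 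Finally, invoking finite projectivity of $\Omega^1_\dee(\cA)$ and a dual basis $(e_i,\epsilon^i)$ with $\epsilon^i\in\RA\cX(\cA)$, I rewrite $\alpha = \sum_i e_i\ox\gamma_{\epsilon^i} = 0$. Thus $\Theta = 0$, so $\LA\nabla = \LA{\nabla'}$ and therefore $\RA\nabla = \RA{\nabla'}$.

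The hard part will be precisely this last inversion step. Its delicacy is that the induced affine connections only test the form connection against \emph{bilinear} vector fields, so one cannot directly invoke strong non-degeneracy against the full dual $\RA\cX(\cA)$; instead the argument must weave together strong non-degeneracy, the spanning of $\LRA\cX(\cA)$ over $\C$ by its real part, the generation of $\RA\cX(\cA)$ by $\LRA\cX(\cA)$, and finite projectivity, in order to conclude that the difference two-tensor $\Theta(\omega)$ vanishes.
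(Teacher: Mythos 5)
Your proposal is correct and follows essentially the same route as the paper: reduce to uniqueness of the induced affine connections on vector fields via Theorem \ref{thm: pseudo-Riemannian_calculus} and Theorem \ref{thm: uniqueness vector fields}, then invert the dualisation formulae. The only difference is that you spell out the inversion in full detail (the $\C$-span of $\ph(D)$ in $\LRA{\cX}(\cA)$ via the real/imaginary decomposition, generation of $\RA{\cX}(\cA)$ by bilinear vector fields, and a dual-basis argument using finite projectivity and strong non-degeneracy), whereas the paper compresses this step into a single sentence asserting invertibility for exactly these reasons.
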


\begin{proof}
    This is a direct consequence of Theorem \ref{thm: pseudo-Riemannian_calculus} and \cite[Theorem 3.4]{AWcurvature}, which ensures uniqueness of pseudo-Riemannian calculi on vector fields. Indeed, the formulae expressing affine connections on vector fields in terms of connections on differential forms can be inverted because the image of $D$ by $\ph$ generates $\LRA{\cX}(\cA)$ $\C$-linearly, which in turn generates both $\RA{\cX}(\cA)$ and $\LA{\cX}(\cA)$ as modules (and these are isomorphic to $\Omega_{\dd}^1(\cA)$).
\end{proof}

\begin{remark}
\label{rmk:MR-unique}
In the next section we present an existence proof for Hermitian torsion-free connections on one-forms based on applying \cite{MRLC} in the centred and strongly-nondegenerate setting. The uniqueness criteria given in \cite[Theorem 5.14]{MRLC} is easily seen to hold using that $\sigma$ is the flip-map on central forms, an argument analogous to \cite[Lemma 6.11]{MRLC} and the invariance of the quantum metric $g$ under $\sigma$. This would give an alternative and independent proof of uniqueness.
\end{remark}

\subsection{Existence of the Levi-Civita connection}
\label{subsec:exist}
We now provide a general existence and uniqueness result for centred Hermitian differential calculi, inspired by the framework of \cite{MRLC}, but adapted to the algebraic setting of \cite{Gosw&alCentredBimod, BMbook}. This recovers and extends \cite[Theorem 4.1]{Gosw&alCentredBimod} allowing any centred bimodule of one-forms with strongly non-degenerate inner product, though the approach of the proofs differs. In \cite[Theorem 4.1]{Gosw&alCentredBimod}, a torsion-free connection is produced, and then a correction added to make it Hermitian. We produce an Hermitian connection using our replacements for frames, and then add a correction to obtain a torsion-free connection.


With duality in hand, Theorem \ref{thm: existence and uniqueness on centred bimods} will provide an existence statement for Levi-Civita connections on pseudo-Riemannian calculi coming from centred Hermitian calculi. Uniqueness will then follow from Corollary \ref{cor:unique} or directly from the methods of \cite{MRLC} as in Remark \ref{rmk:MR-unique}.

The key results that allow us to replace the analytic considerations of \cite{MRLC} with strong non-degeneracy are contained in Appendix \ref{sec:fgpipdb}. There we show that finitely generated modules with a strongly non-degenerate inner product, in the sense of Definition \ref{defn:first-order}, have (pairs of) generating sets which serve the same purpose as frames in $C^*$-modules.

\begin{proposition} 
\label{prop: Grassmann}
Let $(\Omega^1_{\dee}(\A),\dag,\pairing{\cdot}{\cdot},\sigma^{\mathrm{can}})$ be a centred Hermitian differential calculus and $\{(\omega_{i},\eta_{i})\}$, a pair of central generating sequences such that $\{(\omega_{i},\eta_{i})\}=\{(\omega_{i}^{\dag},\eta_{i}^{\dag})\}$ as sets and for all $\omega\in\Omega^{1}_{\dee}$
\[
\omega=\sum_{i}\omega_{i}\pairing{\eta_{i}}{\omega}=\sum_{i}\eta_{i}\pairing{\omega_{i}}{\omega}.
\] 
Such $\dag$-invariant central generating sets exist by Lemma \ref{lem:daginvariantcentral}.
Then
\[
\nablar(\omega):=\frac{1}{2}\left(\sum_{i} \omega_{i}\otimes \mathrm{d}\pairing{\eta_{i}}{\omega}+\eta_{i}\otimes \mathrm{d}\pairing{{\omega_{i}}}{{\omega}}\right),
\]
is an Hermitian $\sigma$-$\dag$-bimodule connection on $\Omega^{1}_{\dee}(\A)$. 
\end{proposition}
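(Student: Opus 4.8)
The plan is to verify, in turn, the three properties packaged into the claim: that $\nablar$ is a right connection (Leibniz rule), that it is Hermitian, and that $(\nablar,\sigma)$ is a $\sigma$-$\dag$-bimodule connection, i.e.\ $\sigma\circ\nablar = \nablal$ with $\nablal = -\dag\circ\nablar\circ\dag$. The three workhorses throughout are the two reconstruction identities $\omega = \sum_i\omega_i\pairing{\eta_i}{\omega} = \sum_i\eta_i\pairing{\omega_i}{\omega}$, the right-linearity and Hermitian symmetry of the inner product, and the $\dag$-bimodule derivation rule $\dee(a^*) = -\dee(a)^\dag$ (equivalently $(\dee c)^\dag = -\dee(c^*)$).

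First I would check the Leibniz rule. Expanding $\nablar(\omega a)$ and using $\pairing{\eta_i}{\omega a}_\A = \pairing{\eta_i}{\omega}_\A a$ together with the Leibniz rule for $\dee\colon\A\to\Omega^1_\dee$, the summands in which $\dee$ lands on the inner product reassemble into $\nablar(\omega)a$. The summands in which $\dee$ lands on $a$ give $\tfrac12\bigl(\sum_i\omega_i\pairing{\eta_i}{\omega}\bigr)\otimes\dee a + \tfrac12\bigl(\sum_i\eta_i\pairing{\omega_i}{\omega}\bigr)\otimes\dee a$ after moving the central scalars across the balanced tensor; both reconstruction identities collapse this to $\omega\otimes\dee a$. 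Hence $\nablar(\omega a) = \nablar(\omega)a + \omega\otimes\dee a$.

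Next, for the Hermitian property I would compute $-\pairing{\nablar\omega}{\rho}_{\Omega^1_\dee} + \pairing{\omega}{\nablar\rho}_{\Omega^1_\dee}$ directly from the mixed-pairing notation. Applying $(\dee c)^\dag = -\dee(c^*)$ and Hermitian symmetry to the $\nablar\omega$ contributions turns each summand into an exact form, and the resulting terms group into two applications of the Leibniz rule for $\dee$, yielding $\tfrac12\dee\bigl(\sum_i\pairing{\omega}{\eta_i}_\A\pairing{\omega_i}{\rho}_\A + \pairing{\omega}{\omega_i}_\A\pairing{\eta_i}{\rho}_\A\bigr)$. Each sum equals $\pairing{\omega}{\rho}_\A$ by the reconstruction identities applied to $\rho$, so the total is $\dee\pairing{\omega}{\rho}_\A$, which is exactly the Hermitian condition.

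The hard part will be the $\sigma$-$\dag$-bimodule identity $\sigma\circ\nablar = \nablal$. Since the left tensor factors $\omega_i,\eta_i$ are central, $\sigma=\sigma^{\mathrm{can}}$ acts as the flip on $\nablar\omega$, giving $\sigma(\nablar\omega) = \tfrac12\sum_i\dee\pairing{\eta_i}{\omega}\otimes\omega_i + \dee\pairing{\omega_i}{\omega}\otimes\eta_i$. Computing $\nablal\omega = -\dag(\nablar(\omega^\dag))$ instead produces tensors of the form $\dee\pairing{\omega^\dag}{\eta_i}\otimes\omega_i^\dag$ (and the analogous $\omega_i\leftrightarrow\eta_i$ term). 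Here I would invoke the $\dag$-invariance of the generating set to reindex the sum, replacing the daggered generators $\omega_i^\dag,\eta_i^\dag$ by undaggered ones, and then use the identity $\pairing{\omega^\dag}{\mu^\dag}_\A = \pairing{\mu}{\omega}_\A$ valid for central $\mu$ — itself a consequence of $g\circ\sigma = g$ combined with the fact that the centre is $\dag$-closed. Together these convert $\nablal\omega$ into precisely $\sigma(\nablar\omega)$. This step is the delicate one, because it is exactly where the $\dag$-symmetry of the chosen generating set and the braiding-invariance of the quantum metric are genuinely used; the other two properties are essentially formal consequences of the reconstruction identities.
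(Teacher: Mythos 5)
Your proposal is correct and follows essentially the same route as the paper's proof: Hermitian compatibility via the Leibniz rule for $\dee$ together with the two reconstruction identities, and the $\sigma$-$\dag$-bimodule property via centrality of the generators (so $\sigma^{\mathrm{can}}$ acts as the flip), reindexing by $\dag$-invariance of the generating set, and the identity $\pairing{\omega^\dag}{\mu^\dag}_\A=\pairing{\mu}{\omega}_\A$ for central $\mu$ derived from $g\circ\sigma^{\mathrm{can}}=g$. The only difference is that you also check the Leibniz rule for $\nablar$ explicitly, which the paper treats as routine for a Grassmann-type formula.
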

\begin{proof} 
The connection $\nablar$ is Hermitian since for all $\omega,\eta\in\Omega^{1}_{\dee}$ we have
\begin{align*}
\pairing{\omega}{\nablar(\eta)}-\pairing{\nablar(\omega)}{\eta}
&=\frac{1}{2}\left(\sum_{i} \pairing{\omega}{\omega_{i}}\mathrm{d}\pairing{\eta_{i}}{\eta}+\pairing{\omega}{\eta_{i}}\mathrm{d}\pairing{{\omega_{i}}}{{\eta}}+(\mathrm{d}\pairing{\omega}{\eta_{i}})\pairing{\omega_{i}}{\eta}+ (\mathrm{d}\pairing{\omega}{\omega_{i}})\pairing{\eta_{i}}{\eta}\right)\\
&=\mathrm{d}\pairing{\omega}{\eta}.
\end{align*}
By centrality of the $\omega_{i},\eta_{i}$ and $g\circ\sigma=g$ we have
\begin{align*}
\sigma\circ\nablar(\omega)=\frac{1}{2}\left(\sum_{i} \mathrm{d}\pairing{\eta_{i}}{\omega}\otimes \omega_{i} +\mathrm{d}\pairing{{\omega_{i}}}{{\omega}}\otimes \eta_{i} \right)=\frac{1}{2}\left(\sum_{i} \mathrm{d}\pairing{\omega^{\dag}}{\eta_{i}^{\dag}}\otimes \omega_{i} +\mathrm{d}\pairing{\omega^{\dag}}{\omega_{i}^{\dag}}\otimes \eta_{i} \right),
\end{align*}
whereas, using that $\sum \omega_{i}\otimes\mathrm{d}\pairing{\eta_{i}}{\omega}=\sum \omega_{i}^{\dag}\otimes\mathrm{d}\pairing{\eta_{i}^{\dag}}{\omega}$, we find
\begin{align*}
-\dag\circ\nablar\circ\dag(\omega)&=-\frac{1}{2}\left(\sum \omega_{i}\otimes\mathrm{d}\pairing{\eta_{i}}{\omega^{\dag}}+\eta_{i}\otimes\mathrm{d}\pairing{\omega_{i}}{\omega^{\dag}}\right)^{\dag}\\
&=\frac{1}{2}\sum \mathrm{d}\pairing{\omega^{\dag}}{\eta_{i}} \otimes \omega_{i}^{\dag}+\mathrm{d}\pairing{\omega^{\dag}}{\omega_{i}}\otimes \eta_{i}^{\dag}\\
&=\frac{1}{2}\sum \mathrm{d}\pairing{\omega^{\dag}}{\eta_{i}^{\dag}} \otimes \omega_{i}+\mathrm{d}\pairing{\omega^{\dag}}{\omega_{i}^{\dag}}\otimes \eta_{i}
\end{align*}
Thus $\nablar$ is a $\sigma$-$\dag$-bimodule connection.
\end{proof}
Recall from Proposition \ref{prop:alpha} that strong non-degeneracy ensures that we have identifications
\begin{align*}&\alphar:T^{3}_{\mathrm{d}}\to \overrightarrow{\mathrm{Hom}}_{\A}(\Omega^{1}_{\dee},T^{2}_{\dee}),\quad \alphar(\eta\otimes\rho\otimes \tau)(\omega)=\eta\otimes\rho\pairing{\tau^{\dag}}{\omega}\\
&\alphal:T^{3}_{\mathrm{d}}\to \overleftarrow{\mathrm{Hom}}_{\A}(\Omega^{1}_{\dee},T^{2}_{\dee}),\quad \alphal(\eta\otimes\rho\otimes \tau)(\omega)=\pairing{\omega^{\dag}}{\eta}\rho\otimes\tau.
\end{align*}
Using $\alphar$, for instance, we can identify a 1-form-valued endomorphism $A\in \overrightarrow{\mathrm{Hom}}_{\A}(\Omega^{1}_{\dee},T^{2}_{\dee})$ on 
$\Omega^{1}_{\dee}$ with a three-tensor $\alphar^{-1}(A)\in T^{3}_{\mathrm{d}}$.

\begin{theorem}
\label{thm: existence and uniqueness on centred bimods}
Let $(\Omega^{1}_{\dee},\dag,\pairing{\cdot}{\cdot},\sigma^{\mathrm{can}})$ be a centred Hermitian calculus, and define the bimodule projections $P=\Psi\otimes 1,\,Q=1\otimes\Psi: T^{3}_{\dee}(\A)\to  T^{3}_{\dee}(\A)$. Then there exists a unique Hermitian, torsion-free $\sigma$-$\dag$-bimodule connection $\nablar^{G}$ on $\Omega^{1}_{\dee}(\A)$. Moreover, for any pair of finite central $\dag$-invariant generating sequences $\{(\omega_{i},\eta_{i})\}$ of $\Omega^{1}_{\dee}$ as in Proposition \ref{prop: Grassmann}, $\nablar^{G}$ is given by the formula
\begin{equation}
\label{eq:explicit-formula}
\nablar^{G}(\omega):=\frac{1}{2}\left(\sum_{i} \omega_{i}\otimes \mathrm{d}\pairing{\eta_{i}}{\omega}+\eta_{i}\otimes \mathrm{d}\pairing{{\omega_{i}}}{{\omega}}\right)-\alphar((1+4PQ)W),
\end{equation}
where $W=\frac{1}{2}\sum \mathrm{d}\omega_{i}\otimes \eta_{i}^{\dag}+\mathrm{d}\eta_{i}\otimes \omega_{i}^{\dag}\in T^{3}_{\dee}(\A)$.
\end{theorem}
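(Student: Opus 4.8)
The uniqueness of $\nablar^{G}$ is already furnished by Corollary \ref{cor:unique}, so the substance of the statement is existence together with the validity of the explicit formula \eqref{eq:explicit-formula}; in particular, once some Hermitian torsion-free $\sigma$-$\dag$-bimodule connection is produced, independence of the right-hand side of \eqref{eq:explicit-formula} from the chosen generating sequence is automatic from uniqueness. My plan is to exhibit $\nablar^{G}$ as a perturbation of the Hermitian $\sigma$-$\dag$-bimodule connection $\nablar$ of Proposition \ref{prop: Grassmann}. Any two right connections on $\Omega^{1}_{\dee}(\A)$ differ by an element of $\RA{\HOM}_{\A}(\Omega^{1}_{\dee},T^{2}_{\dee})$, and by Proposition \ref{prop:alpha} the map $\alphar$ identifies this space with $T^{3}_{\dee}(\A)$; thus $\nablar^{G}:=\nablar-\alphar((1+4PQ)W)$ is automatically a right connection, and it remains to check that it is torsion-free, Hermitian, and $\sigma$-$\dag$-bimodule.

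The cleanest and most important step is to identify the torsion of $\nablar$. Using the reproducing identity $\omega=\sum_{i}\omega_{i}\pairing{\eta_{i}}{\omega}$, the Leibniz rule $\d(a\omega b)=(1-\Psi)(\dee a\ox\omega b)+a(\d\omega)b-(1-\Psi)(a\omega\ox\dee b)$ and $\d(\dee a)=0$ give
\[
\d\omega=\sum_{i}(\d\omega_{i})\pairing{\eta_{i}}{\omega}-(1-\Psi)\big(\omega_{i}\ox\dee\pairing{\eta_{i}}{\omega}\big),
\]
together with the symmetric identity coming from $\omega=\sum_{i}\eta_{i}\pairing{\omega_{i}}{\omega}$. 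Averaging the two and using $\alphar(\d\omega_{i}\ox\eta_{i}^{\dag})(\omega)=(\d\omega_{i})\pairing{\eta_{i}}{\omega}$ yields
\[
(1-\Psi)\circ\nablar(\omega)+\d\omega=\alphar(W)(\omega),
\]
so that the torsion of $\nablar$ in the sense of Definition \ref{defn:torsion on forms} is exactly $\alphar(W)$. Since $\d\omega_{i}\in\IM(1-\Psi)$ we have $PW=0$, hence $(1-P)(1+4PQ)W=W$; and as $\alphar$ intertwines $\Psi$ on the first two tensor legs with $P$, so that $(1-\Psi)\circ\alphar=\alphar\circ(1-P)$, the correction obeys $(1-\Psi)\alphar((1+4PQ)W)=\alphar(W)$ and therefore cancels the torsion of $\nablar$, making $\nablar^{G}$ torsion-free.

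It remains to preserve the two symmetries, and this is where the term $4PQW$ is forced. The Hermitian property of $\nablar^{G}$ reduces, through the mixed-degree pairings $\pairing{\cdot}{\cdot}_{\Omega^{1}_{\dee}(\A)}$, to self-adjointness of $\alphar(V)$, which a direct computation shows to be equivalent to $V=V^{\dag}$ for the bimodule involution on $T^{3}_{\dee}(\A)$; the $\sigma$-$\dag$-bimodule property becomes $\sigma\circ\alphar(V)=-\dag\circ\alphar(V)\circ\dag$, i.e. a relation equating $(2P-1)V$ (since $\sigma$ acts on the first two legs as $2P-1$, because $\Psi=\tfrac12(1+\sigma)$) with the $\dag$-reversal of $V$. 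The structural inputs are that $\dag$ reverses the three tensor legs and commutes with $\Psi$, whence $\dag P\dag=Q$ and $\dag(PQ)\dag=QP$, together with the $\dag$-invariance of the generating sequence — which, after reindexing, gives $W^{\dag}=\tfrac12\sum_{i}\eta_{i}^{\dag}\ox\d\omega_{i}+\omega_{i}^{\dag}\ox\d\eta_{i}$ and $QW^{\dag}=0$ — and the invariance $g\circ\sigma=g$.

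The main obstacle is this simultaneous verification: one must show that $V=(1+4PQ)W$ satisfies both $V^{\dag}=V$ and the $\sigma$-$\dag$ relation while keeping the torsion-fixing constraint $(1-P)V=W$. The coefficient $4$ and the operator $PQ$ are not accidental: they arise from $\Psi=\tfrac12(1+\sigma)$ and from reconciling the $PQ$- and $QP$-contributions that are interchanged by the $\dag$-swap of $P$ and $Q$, and the argument must exploit the explicit form of $W$ rather than any abstract identity among the projections, since $PQP$ is not proportional to $P$. Once these symmetry identities are checked, $\nablar^{G}$ is a Hermitian, torsion-free $\sigma$-$\dag$-bimodule connection given by \eqref{eq:explicit-formula}, and Corollary \ref{cor:unique} supplies uniqueness.
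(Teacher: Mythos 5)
Your strategy is the same as the paper's: start from the Hermitian $\sigma$-$\dag$-bimodule connection $\nablar$ of Proposition \ref{prop: Grassmann}, identify its torsion as $\alphar(W)$, and subtract the correction $\alphar((1+4PQ)W)$. Your torsion computation is correct and complete (the paper organises it slightly differently, computing $(1-\Psi)\nablar^G$ directly, but it is the same argument), and your reductions are valid: the Hermitian property of $\nablar^G$ is indeed equivalent to $V^\dag=V$ for $V=(1+4PQ)W$, and the $\sigma$-$\dag$-bimodule property to $\sigma\circ\alphar(V)=-\alphal(V^\dag)$. However, there is a genuine gap: you never prove these two identities. You correctly call their ``simultaneous verification'' the main obstacle, and then conclude with ``once these symmetry identities are checked'' --- but checking them is precisely the substance of the existence proof, and it is not routine projection algebra.

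Concretely, the linchpin you are missing is the relation
\[
(2P-1)(2Q-1)W=W^{\dag},
\]
which follows from centrality of the $\omega_i,\eta_i$ (so that $\sigma\ox 1$ and $1\ox\sigma$ act as flips on the relevant legs of $W$) together with the formula for $W^\dag$ you derived; you have all the ingredients for it but never state or exploit it. Combining it with $PW=0$ gives $W^{\dag}=(4PQ-2Q+1)W$, and applying $Q$ and using $QW^{\dag}=0$ gives $(4QPQ-Q)W=0$. Only with these two $W$-specific relations can one carry out the computations
\[
((1+4PQ)W)^{\dag}=(1+4QP)W^{\dag}=(1+4QP)(4PQ-2Q+1)W=(8QPQ+4PQ-2Q+1)W=(1+4PQ)W
\]
and
\[
(2P-1)(2Q-1)(2P-1)(1+4PQ)W=(2P-1)(2Q-1)(4PQ-1)W=-(1+4PQ)W=-(1+4QP)W^{\dag},
\]
which establish the Hermitian and braiding properties respectively; the latter also needs the intertwining identity $\sigma\circ\alphar=\alphal\circ\big((\sigma\ox1)(1\ox\sigma)(\sigma\ox1)\big)$, whose proof requires both centrality and $g\circ\sigma^{\mathrm{can}}=g$ --- a hypothesis you list among your ``structural inputs'' but never actually use. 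As you yourself observe, no abstract identity among the projections (e.g.\ $PQP$ is not a multiple of $P$) can substitute for these relations, so without deriving them the Hermitian and $\sigma$-$\dag$-bimodule properties of $\nablar^{G}$ remain unproven and the existence claim is incomplete.
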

\begin{proof} 
By Lemma \ref{lem:daginvariantcentral} of the Appendix, there exist a pair of central generating sequences $\{(\omega_{i},\eta_{i})\}$ satisfying the hypotheses of Proposition \ref{prop: Grassmann}.
By Corollary \ref{cor:unique}, it suffices to prove that the connection \eqref{eq:explicit-formula}
is an Hermitian torsion-free $\sigma$-$\dag$-bimodule connection.
For the projections $P=\Psi\otimes 1$ and $Q=1\otimes\Psi$ we have $2P-1=\sigma\otimes 1$ and $2Q-1=1\otimes \sigma$, and since the $\omega_{i},\eta_{i}$ are central and the sets $\{\omega_{i}\}=\{\omega_{i}^\dag\}$, $\{\eta_{i}\}=\{\eta_{i}^\dag\}$ are $\dag$-invariant, we find
\begin{equation}
(2P-1)(2Q-1)W=\frac{1}{2}\sum \eta_{i}^{\dag}\otimes \mathrm{d}\omega_{i} +\omega_{i}^{\dag}\otimes \mathrm{d}\eta_{i}=\frac{1}{2}\sum \eta_{i}\otimes \mathrm{d}\omega_{i}^{\dag} +\omega_{i}\otimes \mathrm{d}\eta_{i}^{\dag}=W^{\dag}.
\label{eq:w-wdag}
\end{equation}
Since $PW=QW^{\dag}=0$, applying $Q$ to \eqref{eq:w-wdag} yields
\[
W^{\dag}=(2P-1)(2Q-1)W=(4PQ-2Q+1)W,\quad (4QPQ-Q)W=0.
\]
Therefore, using these relations we find
\begin{align*}
((1+4PQ)W)^{\dag}&=(1+4QP)W^{\dag}=(1+4QP)(4PQ-2Q+1)W\\
&=(8QPQ+4PQ-2Q+1)W=(1+4PQ)W,
\end{align*}
which, along with Proposition \ref{prop: Grassmann}, proves the connection is Hermitian. For torsion-free, we first compute
\begin{align*}
(1-\Psi)\alphar((1+4PQ)W)=\alphar((1-P)(1+4PQ)W)=\alphar((1-P)W)=\alphar(W),
\end{align*}
and then
\begin{align*}
(1-\Psi)\nablar^{G}(\omega)
&=\frac{1}{2}\sum(1-\Psi)(\omega_{i}\otimes \mathrm{d}\pairing{\eta_{i}}{\omega}+\eta_{i}\otimes \mathrm{d}\pairing{\omega_{i}}{\omega})-\alphar(W)(\omega)\\
&=\frac{1}{2}\sum(1-\Psi)(\omega_{i}\otimes \mathrm{d}\pairing{\eta_{i}}{\omega}+\eta_{i}\otimes \mathrm{d}\pairing{\omega_{i}}{\omega})-\mathrm{d}\omega_{i}\pairing{\eta_{i}}{\omega}-\mathrm{d}\eta_{i}\pairing{\omega_{i}}{\omega}
=-\mathrm{d}\omega.
\end{align*}
Lastly, to see that $\nablar^G$ is a bimodule connection, Proposition \ref{prop: Grassmann} says that we need only show that
\[
\sigma\circ\alphar((1+4PQ)W)=-\alphal((1+4QP)W^{\dag}).
\]
Now $\sigma=2\Psi-1$ and $\sigma\circ\alphar=\alphal((\sigma\otimes 1)(1\otimes \sigma)(\sigma\otimes 1))$ and we observe that
\begin{align*}
(2P-1)(2Q-1)(2P-1)(1+4PQ)W&=(2P-1)(2Q-1)(4PQ-1)W\\
&=(4PQ-2P-2Q+1)(4PQ-1)W\\
&=(-4PQ-1)W=-(1+4QP)W^{\dag}.
\end{align*} 
Thus we have
\begin{align*}
\sigma\circ\alphar((1+4PQ)W))&=(2\Psi-1)\alphar((1+4PQ)W)\\
&=\alphal((2P-1)(2Q-1)(2P-1)(1+4PQ)W)\\
&=-\alphal((1+4QP)W^{\dag}).
\end{align*}
Uniqueness follows from Corollary \ref{cor:unique}, or via  \cite[Theorem 5.14]{MRLC} as in Remark \ref{rmk:MR-unique}.
\end{proof}

\begin{corollary}
\label{cor:existence-on-vector-fields}
Let $(\Omega^{1}_{\dee},\dag,\pairing{\cdot}{\cdot},\sigma^{\mathrm{can}})$ be a centred Hermitian calculus. Then $(\LA{\cX}(\cA), \bra{\cdot}\ket{\cdot}_\cA, D_\ph, \LA{\nabla})$ defines a right real pseudo-Riemannian calculus and $(\RA{\cX}(\cA), \prescript{}{\A}{\bra{\cdot}\ket{\cdot}}, D_\ph, \RA{\nabla})$ defines a left real pseudo-Riemannian calculus, and the inner products are strongly non-degenerate.
\end{corollary}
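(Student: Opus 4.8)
The plan is to combine the existence result for connections on one-forms (Theorem \ref{thm: existence and uniqueness on centred bimods}) with the dualisation machinery developed earlier in this section, so that the corollary is essentially immediate.

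First I would invoke Theorem \ref{thm: existence and uniqueness on centred bimods}: since $(\Omega^{1}_{\dee},\dag,\pairing{\cdot}{\cdot},\sigma^{\mathrm{can}})$ is a centred Hermitian calculus, there exists a unique Hermitian, torsion-free $\sigma$-$\dag$-bimodule connection $\nablar^{G}$ on $\Omega^{1}_{\dee}(\A)$, with conjugate $\nablal^{G}=-\dag\circ\nablar^{G}\circ\dag$. This is precisely the input data required to run Theorem \ref{thm: pseudo-Riemannian_calculus}, and the connections $\LA{\nabla},\RA{\nabla}$ appearing in the statement are to be understood as the affine connections dual to $\nablal^{G},\nablar^{G}$ via the duality formulae of Proposition \ref{prop:left-right-left-right}.

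Next I would apply Theorem \ref{thm: pseudo-Riemannian_calculus} to the $\sigma$-$\dag$-bimodule connection $(\nablar^{G},\sigma^{\mathrm{can}})$. Its conclusion gives directly that $(\LA{\cX}(\cA), \bra{\cdot}\ket{\cdot}_\cA, D_\ph, \LA{\nabla})$ is a right real pseudo-Riemannian calculus and that $(\RA{\cX}(\cA), \prescript{}{\A}{\bra{\cdot}\ket{\cdot}}, D_\ph, \RA{\nabla})$ is a left real pseudo-Riemannian calculus, with the inner products strongly non-degenerate (the latter property being transported from the one-forms exactly as recorded in Theorem \ref{thm: pseudo-Riemannian_calculus}). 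If one wishes to emphasise uniqueness of the resulting calculi, it can be read off from Corollary \ref{cor:unique} (equivalently Theorem \ref{thm: uniqueness vector fields}).

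I do not expect a genuine obstacle here: the substantive work has already been carried out in Theorem \ref{thm: existence and uniqueness on centred bimods} (existence on forms) and Theorem \ref{thm: pseudo-Riemannian_calculus} (transfer of the pseudo-Riemannian structure to vector fields). The only thing to check is that the hypotheses of Theorem \ref{thm: pseudo-Riemannian_calculus} are satisfied, which is exactly the output of Theorem \ref{thm: existence and uniqueness on centred bimods}; so the proof reduces to concatenating these two statements.
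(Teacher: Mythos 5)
Your proposal is correct and matches the paper's intended argument exactly: the paper states Corollary \ref{cor:existence-on-vector-fields} without a separate proof precisely because it follows by concatenating Theorem \ref{thm: existence and uniqueness on centred bimods} (existence of the Hermitian torsion-free $\sigma$-$\dag$-bimodule connection $\nablar^{G}$ on $\Omega^{1}_{\dee}(\A)$) with Theorem \ref{thm: pseudo-Riemannian_calculus} (transfer of that data to real pseudo-Riemannian calculi on $\LA{\cX}(\cA)$ and $\RA{\cX}(\cA)$). Your identification of $\LA{\nabla},\RA{\nabla}$ as the dual affine connections of Proposition \ref{prop:left-right-left-right} is exactly the reading the paper intends.
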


\subsection{Comparing curvatures}
\label{subsec:curvatures}
In this section we compare the definitions of curvature tensor presented in \cite{AWcurvature} and \cite{MRWeitzenbock}. To recall the definition from \cite{MRWeitzenbock},
let $(\Omega^{1}_{\dee},\dag,\pairing{\cdot}{\cdot}_\A,\Psi)$ be an Hermitian second order differential structure. 
Given a right connection $\nablar:\Omega^1_\dee(\A)\to T^2_\dee(\A)$, we can define the curvature $R^{\nablar}:\Omega^1_\dee(\A)\to \Omega^1_\dee(\A)\ox_\A\Lambda^2_\dee(\A)$ (where $\Lambda^2_\dee(\A)=(1-\Psi)T^2_\dee(\A)$) by
\[
R^{\nablar}(\omega)=1\ox(1-\Psi)\circ (\nablar\ox 1+1\ox\d)\circ\nablar(\omega),\qquad \omega\in\Omega^1_\dee(\A).
\]
Similarly, for a left connection $\nablal:\Omega^1_\dee(\A)\to T^2_\dee(\A)$ we can define 
$R^{\nablal}:\Omega^1_\dee(\A)\to \Lambda^2_\dee(\A)\ox_\A\Omega^1_\dee(\A)$ by
\[
R^{\nablal}(\omega)=(1-\Psi)\ox1\circ (1\ox\nablar-\d\ox1)\circ\nablal(\omega),\qquad \omega\in\Omega^1_\dee(\A).
\]
To compare this definition of curvature to that of \cite{AWcurvature}, we require the ability to evaluate, via duality, on three vector fields. This seems difficult in general, but if two of the vector fields are bilinear, we can do the following.

For bilinear vector fields $Z_1,  Z_2 \in D$, a right linear vector field $X \in \RA{\cX}(\cA)$, and a left linear vector field $Y \in \LA{\cX}(\cA)$, we can extend the pairing of Definition \ref{def: product vector fields} of pairs of vector fields with two-tensors to a pairing with three-tensors by defining pairings on a simple three-tensor $\omega\ox\rho\ox\eta$ 
\[
	\begin{split}
		&  \product{X}{\omega\ox\rho\ox\eta}{Z_2 \cdot Z_1} :=  \Pairing{X}{\omega} \Pairing{  Z_2}{\rho} \Pairing{ Z_1}{\eta}\\
		&  \product{Z_1 \cdot Z_2}{\omega\ox\rho\ox\eta}{Y} :=\Pairing{Z_1 }{\omega}\Pairing{ Z_2 }{\rho}\Pairing{\eta}{ Y}
	\end{split}
	\]
and extending by linearity.

\begin{theorem}
\label{thm:curv}
	Let $(\Omega_{\dd}^1(\cA), \dagger, \Psi, \bra{\cdot}\ket{\cdot},\sigma^\mathrm{can})$ be a centred Hermitian differential calculus and $(\nablar, \nablal)$ a pair of conjugate connections. The curvature maps $\RA{R},\LA{R}$ of the affine connections $\partial \mapsto \RA{\nabla}_\partial$ and $\partial \mapsto \LA{\nabla}_\partial$ on vector fields are related to the curvature tensors $R^{\RA{\nabla}}, R^{\LA{\nabla}}$ of $(\RA{\nabla}, \LA{\nabla})$ by 
	\[
	\begin{split}
		& \Pairing{\RA{R}(\partial_1, \partial_2)X}{\omega} = 2 \product{X}{R^{\RA{\nabla}}(\omega)}{\ph(\partial_2) \cdot \ph(\partial_1)} \\
		& \Pairing{\omega}{\LA{R}(\partial_1, \partial_2)Y} = 2 \product{ \ph(\partial_1) \cdot \ph(\partial_2) }{R^{\LA{\nabla}}(\omega)}{Y},
	\end{split}
	\]
	for all $\partial_1,  \partial_2 \in D$, $X \in \RA{\cX}(\cA)$, $Y \in \LA{\cX}(\cA)$ and $\omega \in \Omega_{\dd}^1(\cA)$.
\end{theorem}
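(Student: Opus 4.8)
The plan is to reduce the statement to the duality identity of Proposition~\ref{prop:left-right-left-right}, expand the curvature operator, and recognise the terms that survive as the two summands defining the form-curvature $R^{\RA{\nabla}}$. I would prove the first identity (for $\RA{R}$) in detail; the second is obtained by the identical computation with $\RA{\nabla}$ replaced by its conjugate $\LA{\nabla}$, or by transporting through the dagger via Proposition~\ref{prop: dagger and affine connections}.

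First I would write $\RA{\nabla}(\omega)=\omega_{(1)}\ox\omega_{(2)}$ in Sweedler notation and iterate the defining relation $\Pairing{\RA{\nabla}_\partial X}{\omega}=\partial\Pairing{X}{\omega}-\product{X}{\RA{\nabla}(\omega)}{\ph(\partial)}$ to compute $\Pairing{\RA{\nabla}_{\partial_1}\RA{\nabla}_{\partial_2}X}{\omega}$, applying the Leibniz rule for $\partial_1$ to each factor of the product. Assembling $\RA{R}(\partial_1,\partial_2)=\RA{\nabla}_{\partial_1}\RA{\nabla}_{\partial_2}-\RA{\nabla}_{\partial_2}\RA{\nabla}_{\partial_1}-\RA{\nabla}_{[\partial_1,\partial_2]}$ then triggers three cancellations: the pure second-derivative terms combine as $[\partial_1,\partial_2]\Pairing{X}{\omega}$ and annihilate the leading term of the commutator connection, while the mixed terms of the shape $(\partial_i\Pairing{X}{\omega_{(1)}})\Pairing{\omega_{(2)}}{\ph(\partial_j)}$ cancel in antisymmetric pairs. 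The residue splits into two families.

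The first family consists of the terms in which $\RA{\nabla}$ acts twice, namely $\product{X}{(\RA{\nabla}\ox1)\RA{\nabla}(\omega)}{\ph(\partial_2)\cdot\ph(\partial_1)}-\product{X}{(\RA{\nabla}\ox1)\RA{\nabla}(\omega)}{\ph(\partial_1)\cdot\ph(\partial_2)}$. Since $\ph(\partial_1),\ph(\partial_2)$ are bilinear and the calculus is centred, Lemma~\ref{lemma: antisym_vector_fields} applied to the last two tensor slots turns this difference into $2\product{X}{(1\ox(1-\Psi))(\RA{\nabla}\ox1)\RA{\nabla}(\omega)}{\ph(\partial_2)\cdot\ph(\partial_1)}$, the $\RA{\nabla}\ox1$ part of $2\product{X}{R^{\RA{\nabla}}(\omega)}{\ph(\partial_2)\cdot\ph(\partial_1)}$. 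The second family collects the remaining terms with the surviving piece $\product{X}{\RA{\nabla}(\omega)}{\ph([\partial_1,\partial_2])}$ of the commutator connection; all carry the common prefactor $\Pairing{X}{\omega_{(1)}}$. The key auxiliary result I would prove is a noncommutative Cartan identity: for any one-form $\eta=\sum_i a_i\,\dee b_i$ and $\partial_1,\partial_2\in D$,
\[
-\partial_1\Pairing{\eta}{\ph(\partial_2)}+\partial_2\Pairing{\eta}{\ph(\partial_1)}+\Pairing{\eta}{\ph([\partial_1,\partial_2])}=2\product{\ph(\partial_2)}{\d\eta}{\ph(\partial_1)}.
\]
This is established exactly as in Theorem~\ref{thm: Lie algebra}: the Leibniz rule cancels all second-derivative terms $\sum_i a_i\,\partial_k\partial_l b_i$, leaving $\product{\ph(\partial_2)}{\sum_i\dee a_i\ox\dee b_i}{\ph(\partial_1)}-\product{\ph(\partial_1)}{\sum_i\dee a_i\ox\dee b_i}{\ph(\partial_2)}$, to which Lemma~\ref{lemma: antisym_vector_fields} and the identity $\d\eta=(1-\Psi)\sum_i\dee a_i\ox\dee b_i$ from \eqref{eq: second-order-diff} apply. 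Taking $\eta=\omega_{(2)}$ and using that $\d$ already lands in $(1-\Psi)T^2_\dee$ identifies the second family with the $1\ox\d$ part, $2\product{X}{(1\ox(1-\Psi))(1\ox\d)\RA{\nabla}(\omega)}{\ph(\partial_2)\cdot\ph(\partial_1)}$. Adding both families and recalling $R^{\RA{\nabla}}=(1\ox(1-\Psi))(\RA{\nabla}\ox1+1\ox\d)\RA{\nabla}$ gives the claim.

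I expect the main obstacle to be the bookkeeping in the second paragraph, tracking precisely which of the Leibniz-expanded terms cancel, together with the Cartan identity above, which is the point at which the commutator of derivations is traded for the exterior derivative $\d$. This is the one-order-higher analogue of the torsion computation in Proposition~\ref{prop: torsion coincide}, and correctly placing the $(1-\Psi)$ projection and fixing the signs is the delicate step; bilinearity of $\ph(\partial_i)$, so that left and right pairings agree, and centredness are used throughout to make the antisymmetrisations meaningful.
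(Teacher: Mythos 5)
Your proposal is correct and is essentially the paper's own argument: both proofs expand the curvature through the duality formula of Proposition \ref{prop:left-right-left-right}, cancel the pure second-derivative and mixed Leibniz terms, convert the two surviving antisymmetrised families into $(1-\Psi)$ expressions with Lemma \ref{lemma: antisym_vector_fields} (using $\d(a\,\dee b)=(1-\Psi)(\dee a\ox\dee b)$ to recognise the $1\ox\d$ part), and obtain the second identity by transporting through $\dagger$ via Proposition \ref{prop: dagger and affine connections}. The only difference is bookkeeping: your ``noncommutative Cartan identity'' packages as a standalone lemma exactly the computation the paper performs inline in \eqref{eq: curv_comp_1.1}--\eqref{eq: curv_comp_1.3} after writing $\rho^i=a^i_j\,\dee b^i_j$, so it is a repackaging of the same argument rather than a different route.
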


\begin{proof}
	We start by proving the formula for $\RA{R}$. Let $X \in \RA{\cX}(\cA)$, and $\omega\in \Omega^1_\dee(\A)$. Then by definition
	\[
		\Pairing{\RA{R}(\partial_1, \partial_2)X}{\omega} = \Pairing{\RA{\nabla}_{\partial_1} \RA{\nabla}_{\partial_2}  X}{\omega} - \Pairing{\RA{\nabla}_{\partial_2} \RA{\nabla}_{\partial_1}  X}{\omega} - \Pairing{\RA{\nabla}_{[\partial_1, \partial_2]}X}{\omega}.
	\]
	Using the definition of the right affine connection,
	\begin{alignat*}{2}
		& \Pairing{\RA{\nabla}_{\partial_1} \RA{\nabla}_{\partial_2}  X}{\omega} 
		&& = \partial_1 \Pairing{\RA{\nabla}_{\partial_2}X}{\omega} - \product{ \RA{\nabla}_{\partial_2} X }{\RA{\nabla}(\omega)}{\ph(\partial_1)} \\
		& && = \partial_1 \partial_2 \Pairing{X}{\omega} - \partial_1 \product{X}{\RA{\nabla}(\omega)}{\ph(\partial_2)} - \product{ \RA{\nabla}_{\partial_2} X }{\RA{\nabla}(\omega)}{\ph(\partial_1)}, \\
		& \Pairing{\RA{\nabla}_{\partial_2} \RA{\nabla}_{\partial_1}  X}{\omega} && = \partial_2 \partial_1 \Pairing{X}{\omega} - \partial_2 \product{X}{\RA{\nabla}(\omega)}{\ph(\partial_1)} - \product{ \RA{\nabla}_{\partial_1} X }{\RA{\nabla}(\omega)}{\ph(\partial_2)} \\
		& \Pairing{\nablar_{[\partial_1, \partial_2]}X}{\omega} && = \partial_1 \partial_2 \Pairing{X}{\omega} - \partial_2 \partial_1 \Pairing{X}{\omega} - \product{X}{\RA{\nabla}(\omega)}{ \ph \big( [\partial_1, \partial_2] \big) }.
	\end{alignat*}
	Hence the curvature becomes
	\begin{alignat*}{2}
		\Pairing{\RA{R}(\partial_1, \partial_2)X}{\omega} & =\ && \product{X}{\RA{\nabla}(\omega)}{ \ph \big( [\partial_1, \partial_2] \big) } \\
		& &&\ - \partial_1 \product{X}{\RA{\nabla}(\omega)}{\ph(\partial_2)} + \partial_2 \product{X}{\RA{\nabla}(\omega)}{\ph(\partial_1)} \\
		& &&\ - \product{\nablar_{\partial_2} X}{\RA{\nabla}(\omega)}{\ph(\partial_1)}  + \product{\nablar_{\partial_1} X}{\RA{\nabla}(\omega)}{\ph(\partial_2)}.
	\end{alignat*}
	We write $\RA{\nabla}(\omega) = \eta^i \otimes \rho^i$ and $\rho^i = a^i_j \dd b^i_j$, with implicit sums over $i$ and $j$. Then
	\[
		\partial_2 \product{X}{\RA{\nabla}(\omega)}{\ph(\partial_1)} = \big( \partial_2 \Pairing{X}{\eta^i} \big) \Pairing{\ph(\partial_1)}{\rho^i} + \Pairing{X}{\eta^i} \big( \partial_2 \Pairing{\ph(\partial_1)}{\rho^i} \big)
	\]
	and
	\[
		\partial_2 \Pairing{\ph(\partial_1)}{\rho^i} = \partial_2 \big( a^i_j \partial_1 b^i_j \big) = ( \partial_2 a^i_j ) ( \partial_1 b^i_j ) + a^i_j \partial_2 \partial_1 b^i_j,
	\]
	which yield
	\begin{align}
	     \partial_2 \product{X}{\RA{\nabla}(\omega)}{\ph(\partial_1)} &= \big( \partial_2 \Pairing{X}{\eta^i} \big) \Pairing{\ph(\partial_1)}{\rho^i}
	     + \Pairing{X}{\eta^i} \product{\ph(\partial_2)}{\dd a^i_j \ox \dd b^i_j}{\ph(\partial_1)} \nonumber\\
	    &+ \Pairing{X}{\eta^i} (a^i_j \partial_2 \partial_1 b^i_j), \label{eq: curv_comp_1.1}\\
	    \partial_1 \product{X}{\RA{\nabla}(\omega)}{\ph(\partial_2)} &= \big( \partial_1 \Pairing{X}{\eta^i} \big) \Pairing{\ph(\partial_2)}{\rho^i}
	     +  \Pairing{X}{\eta^i} \product{\ph(\partial_1)}{\dd a^i_j \ox \dd b^i_j}{\ph(\partial_2)} \nonumber \\
	    &+  \Pairing{X}{\eta^i} (a^i_j \partial_1 \partial_2 b^i_j),  \label{eq: curv_comp_1.2} \\
	    \product{X}{\RA{\nabla}(\omega)}{\ph \big( [\partial_1, \partial_2] \big)} &= \Pairing{X}{\eta^i} (a^i_j \partial_2 \partial_1 b^i_j) - \Pairing{X}{\eta^i} (a^i_j \partial_1 \partial_2 b^i_j). \label{eq: curv_comp_1.3}
	\end{align}
	Furthermore, using the definition of vector field products
	\begin{align}
	    \label{eq: curv_comp_2.1} \product{\RA{\nabla}_{\partial_1} X}{\RA{\nabla}(\omega)}{\ph(\partial_2)} = \big[ \partial_1 \Pairing{X}{\eta^i} - \product{X}{\RA{\nabla}(\eta^i)}{\ph(\partial_1)} \big] \Pairing{\ph(\partial_2)}{\rho^i}, \\
        \label{eq: curv_comp_2.2} \product{\RA{\nabla}_{\partial_2} X}{\RA{\nabla}(\omega)}{\ph(\partial_1)} = \big[ \partial_2 \Pairing{X}{\eta^i} - \product{X}{\RA{\nabla}(\eta^i)}{\ph(\partial_2)} \big] \Pairing{\ph(\partial_1)}{\rho^i}.
	\end{align}
	Computing \eqref{eq: curv_comp_1.1} - \eqref{eq: curv_comp_1.2} + \eqref{eq: curv_comp_1.3} + \eqref{eq: curv_comp_2.1} - \eqref{eq: curv_comp_2.2} and simplifying yields
	\[
	    \Pairing{\RA{R}(\partial_1, \partial_2)X}{\omega} = \product{X}{\RA{\nabla}(\eta^i) \otimes \rho^i + \eta^i \otimes \dd a^i_j \otimes \dd b^i_j}{\ph(\partial_2) \cdot \ph(\partial_1) - \ph(\partial_1) \cdot \ph(\partial_2)}.
	\]
	We can then use Lemma \ref{lemma: antisym_vector_fields} and conclude that
	\[
	    \Pairing{\RA{R}(\partial_1, \partial_2)X}{\omega} = 2 \product{X}{1 \otimes (1 - \Psi) \circ (\RA{\nabla} \otimes 1 + 1 \otimes \dd_\Psi) \circ \RA{\nabla} (\omega)}{\ph(\partial_2) \cdot \ph(\partial_1)}.
	\]
	Since we have chosen a representation $\RA{\nabla}(\omega) = \eta^i \otimes \rho^i$, we need to check that the end result does not depend on this choice. Indeed, the curvature tensor $R^{\RA{\nabla}}(\omega)$ is well-defined on $\Omega_{\dd}^1(\cA) \otimes_\cA \Lambda_{\dd}^2(\cA)$, so our computation does not depend on the choice of representative.
	
	The second formula of the theorem follows from the first by taking the dagger
	\[
	\begin{split}
	    \Pairing{\eta^\dagger}{\big( \RA{R}(\partial_1, \partial_2) X \big)^\dagger} &= \Pairing{\RA{R}(\partial_1, \partial_2)X}{\eta}^{*} = 2 \product{X}{R^{\RA{\nabla}}(\eta)}{\ph(\partial_2) \cdot \ph(\partial_1)}^{*} \\
	    &= 2 \product{\ph(\partial_1) \cdot \ph(\partial_2)}{\big( R^{\RA{\nabla}}(\eta) \big)^\dagger}{X^\dagger}.
    \end{split}
    \]
    We know that $\big( R^{\RA{\nabla}}(\omega) \big)^\dagger = R^{\LA{\nabla}}(\omega^\dagger)$ for all $\omega \in \Omega^1_\dee(\A)$ (see \cite[Section 3.1]{MRWeitzenbock}) and moreover
    \[
        \dagger \circ \RA{R}(\partial_1, \partial_2) = \dagger \circ \big( \RA{\nabla}_{\partial_1} \RA{\nabla}_{\partial_2} - \RA{\nabla}_{\partial_2} \RA{\nabla}_{\partial_1} - \RA{\nabla}_{[\partial_1, \partial_2]} \big) = \LA{R}(\partial_1, \partial_2) \circ \dagger
    \]
    since $\dagger \circ \RA{\nabla}_\partial = \LA{\nabla}_\partial \circ \dagger$ (see Proposition \ref{prop: dagger and affine connections}). Therefore setting $\omega := \eta^\dagger$ and $Y := X^\dagger$, we find that
    \[
        \Pairing{\omega}{\LA{R}(\partial_1, \partial_2)Y} = 2 \product{ \ph(\partial_1) \cdot \ph(\partial_2) }{R^{\LA{\nabla}}(\omega)}{Y}. \qedhere
    \]
\end{proof}

\subsection{Isospectral deformations}
\label{subsec:isospectral}

We now briefly show that, using our approach, we can recover the pseudo-Riemannian calculi constructed in \cite{AWcurvature} on the noncommutative 2-torus $\T^2_\theta$ and noncommutative 3-sphere $\SS^3_\theta$, from general results on $\theta$-deformed spectral triples in \cite{Gosw&alCentredBimod} and \cite{MRLC}. These manifolds possess the property that the isometric $\T^2$-action on $M$ used to define $\theta$-deformations is a free action. This is crucial for the proof given in \cite{Gosw&alCentredBimod}, because free torus actions yield centred bimodules for the deformed algebra, and likewise for the proof of Theorem \ref{thm: existence and uniqueness on centred bimods}.

For a compact Riemannian manifold $(M,g)$ equipped with a Dirac bundle $\slashed{S}\to M$, we have an associated spectral triple $(C^\infty(M),L^2(M,\slashed{S}),\slashed{D})$. Then $\Omega^{1}_{\slashed{D}}(C^\infty(M))\cong \Omega^1(M)\ox\C$ \cite[Chapter VI]{BRB}, and we let $\pairing{\cdot}{\cdot}_{g}$ be the inner product on $\Omega^{1}_{\slashed{D}}(C^\infty(M))$ induced by $g$. The junk two-tensors in $T^{2}_{\slashed{D}}(C^{\infty}(M))\simeq T^{2}(M)$ coincide with the module of symmetric tensors \cite[Example 4.26]{MRLC}. Thus for $\sigma: T^{2}_{\slashed{D}}(M)\to T^{2}_{\slashed{D}}(M)$ the standard flip map we can set $\Psi:=\frac{1+\sigma}{2}$ for the junk projection.

\begin{theorem} 
\label{thm:LC-class}
Let $(M,g)$ be a compact Riemannian manifold with a Dirac bundle $\slashed{S}\to M$.  Then $(\Omega^{1}_{\slashed{D}}(C^{\infty}(M)),\dag, \Psi,\pairing{\cdot}{\cdot}_{g})$ is a centred Hermitian differential calculus and there exists a unique Hermitian torsion-free $\dag$-bimodule connection $(\nablar^{G},\sigma)$ on $\Omega^{1}_{\slashed{D}}(C^\infty(M))\cong \Omega^1(M)\ox\C$. The restriction to real forms $\nablar^{G}:\Omega^{1}(M) \to \Omega^{1}(M) \ox \Omega^1(M)$ coincides with the (classical) Levi-Civita connection on $\Omega^{1}(M)$.
\end{theorem}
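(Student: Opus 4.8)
The plan is to treat the statement in three stages: first verify that $(\Omega^{1}_{\slashed{D}}(C^{\infty}(M)),\dag,\Psi,\pairing{\cdot}{\cdot}_g)$ meets all conditions of Definition \ref{def:canonicalcentred}; then quote Theorem \ref{thm: existence and uniqueness on centred bimods} to produce the unique Hermitian torsion-free $\sigma$-$\dag$-bimodule connection $\nablar^{G}$; and finally identify $\nablar^{G}$ on real forms with the classical Levi-Civita connection by a uniqueness argument. For the first stage, the decisive point is that $C^{\infty}(M)$ is commutative, so $\Omega^{1}_{\slashed{D}}(C^{\infty}(M))\cong\Omega^1(M)\ox\C$ is a symmetric bimodule, every element is central, and centredness (condition 1 of Definition \ref{def:canonicalcentred}) is automatic. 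Compactness of $M$ gives finite generation and projectivity by Serre--Swan, and the inner product $\pairing{\cdot}{\cdot}_g$ is positive definite, hence strongly non-degenerate, so the remaining conditions of Definition \ref{defn:first-order} hold. Because all elements are central, the canonical braiding $\sigma^{\mathrm{can}}$ of Proposition \ref{prop: braiding} is the flip map $\sigma$ and $\Psi=\tfrac12(1+\sigma)$ projects onto symmetric two-tensors. Symmetry of the Riemannian metric gives $g\circ\sigma=g$ (condition 2, using Definition \ref{defn:metric-bilinear}), while the identification $JT^{2}_{\slashed D}=\IM\Psi$ of junk with symmetric tensors from \cite[Example 4.26]{MRLC} gives condition 3. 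Hence the calculus is centred Hermitian and Theorem \ref{thm: existence and uniqueness on centred bimods} applies, furnishing a unique $\nablar^{G}$.

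For the final claim I would show that the $\C$-linear extension $\nabla$ of the classical Levi-Civita connection on $\Omega^1(M)$ is itself an Hermitian, torsion-free $\sigma$-$\dag$-bimodule connection; uniqueness (Corollary \ref{cor:unique}) then forces $\nablar^{G}=\nabla$, and restriction to $\Omega^1(M)$ yields the theorem. The verifications are: (i) the classical covariant derivative of one-forms obeys the right Leibniz rule with the differential in the second slot, so $\nabla$ is a right connection; (ii) on real one-forms $\pairing{\cdot}{\cdot}_g$ restricts to the Riemannian metric and $\omega^{\dag}=-\omega$ (skew-adjointness of Clifford multiplication, which is exactly what makes the bilinear quantum metric $g(\omega\ox\eta)=-\pairing{\omega^{\dag}}{\eta}_g$ equal the positive Riemannian metric), so the Hermitian condition $-\pairing{\nablar\omega}{\eta}_{\Omega^1_{\slashed D}}+\pairing{\omega}{\nablar\eta}_{\Omega^1_{\slashed D}}=\dee\pairing{\omega}{\eta}_g$ reduces to classical metric compatibility $X\,g(\omega,\eta)=g(\nabla_X\omega,\eta)+g(\omega,\nabla_X\eta)$; (iii) antisymmetrising the covariant derivative of a torsion-free connection returns the exterior derivative, i.e. $(1-\Psi)\nabla=-\d$ once the noncommutative differential $\d$ of \eqref{eq: second-order-diff} is matched with the classical $\dee$, which is precisely the torsion-free condition of Definition \ref{defn:torsion on forms}; and (iv) since $g$ is real the Levi-Civita connection is real, giving $\sigma\circ\nabla=-\dag\circ\nabla\circ\dag=\nablal$, so $(\nabla,\sigma)$ is a $\sigma$-$\dag$-bimodule connection.

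The main obstacle I anticipate lies in steps (ii) and (iii): one must pin down the sign conventions so that the operator-adjoint $\dag$ acts as $-\mathrm{id}$ on real one-forms and so that $\d$ restricted to the classical calculus agrees, up to the correct sign, with the usual $\dee$. These are the only places where the commutative geometry and the abstract $\dag$-formalism must be reconciled; once they are settled, the three defining conditions collapse to the familiar defining properties of the Levi-Civita connection and uniqueness closes the argument. A cleaner alternative that sidesteps the form-level bookkeeping is to dualise: by Theorem \ref{thm: pseudo-Riemannian_calculus} the connection $\nablar^{G}$ induces a real pseudo-Riemannian calculus on vector fields, which by Theorem \ref{thm: uniqueness vector fields} is unique and by Example \ref{eg:pseudo-vectorfields} is the classical Levi-Civita connection; inverting the (invertible) duality between $\LRA{\cX}(\cA)$ and $\cZ(\Omega^1_{\slashed D})$ then transports this identification back to forms.
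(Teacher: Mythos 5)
Your proposal is correct and is essentially the argument the paper intends: the theorem is stated without an explicit proof, the intended justification being the paragraph preceding it (the identification $\Omega^{1}_{\slashed{D}}(C^{\infty}(M))\cong\Omega^{1}(M)\ox\C$, the identification of junk two-tensors with symmetric tensors from \cite[Example 4.26]{MRLC}, and $\Psi=\tfrac12(1+\sigma)$ with $\sigma$ the flip), followed by Theorem \ref{thm: existence and uniqueness on centred bimods} for existence and uniqueness, and the classical uniqueness of the Levi-Civita connection for the identification on real forms. Your three-stage plan, including the dualisation alternative via Theorem \ref{thm: pseudo-Riemannian_calculus}, Theorem \ref{thm: uniqueness vector fields} and Example \ref{eg:pseudo-vectorfields}, matches this.

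One step should be repaired: you assert that the inner product is ``positive definite, hence strongly non-degenerate''. That implication is false in general, and the distinction is one of the central points of this paper: positive definiteness only yields weak non-degeneracy, i.e.\ injectivity of the musical map. In the present setting strong non-degeneracy does hold, but for a different reason: fibrewise, a positive definite metric on a finite-rank vector bundle is invertible, so $g$ induces a bundle isomorphism between $T^{*}M$ and its dual, hence by Serre--Swan a module isomorphism $\Omega^{1}_{\slashed{D}}(C^{\infty}(M))\to\RA{\HOM}_{C^{\infty}(M)}\big(\Omega^{1}_{\slashed{D}}(C^{\infty}(M)),C^{\infty}(M)\big)$. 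Equivalently, one may invoke the existence of frames for positive definite inner products on finitely generated projective modules over the spectrally invariant subalgebra $C^{\infty}(M)\subset C(M)$, as in the paper's remark following Definition \ref{defn:first-order}. With that substitution, the rest of your verification (centredness from commutativity, $\sigma^{\mathrm{can}}$ equal to the flip, $g\circ\sigma=g$ from symmetry of the Riemannian metric, the junk condition, and the sign bookkeeping for $\dag$ and $\d$ that you correctly flag as the delicate point) goes through.
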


Applying $\theta$-deformation for a free torus action yields a noncommutative algebra 
$C^\infty(M_\theta)$ for which the data $(C^\infty(M_\theta), L^2(M, \slashed{S}), \slashed{D})$ is still a spectral triple, and the associated one-forms $\Omega_{\slashed{D}}^1(M_\theta)$ are a centred bimodule.

\begin{theorem}[\cite{Gosw&alCentredBimod}]
    Let $M$ be a compact spin Riemannian manifold with a free isometric torus action. Let $\slashed{S}$ be a Dirac bundle on $M$ and $\slashed{D}$ be the associated Dirac operator. Then the bimodule $\Omega_{\slashed{D}}^1(M_\theta)$ of differential one-forms over the $\theta$-deformed spectral triple $(C^\infty(M_\theta), L^2(M, \slashed{S}), \slashed{D})$ is a centred bimodule.
\end{theorem}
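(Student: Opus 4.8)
The plan is to exploit the residual torus action together with the geometry of the principal bundle $M\to M/\T^n$ furnished by freeness. Since the $\T^n$-action is isometric and lifts to $\slashed{S}$ commuting with $\slashed{D}$, conjugation on $\mathbb{B}(L^2(M,\slashed{S}))$ induces an action on $\Omega^1_{\slashed{D}}(M_\theta)$ whose isotypic decomposition $\Omega^1_{\slashed{D}}(M_\theta)=\bigoplus_{r\in\Z^n}\Omega^1(M)_r$ agrees, as a graded vector space, with the Fourier decomposition of the classical one-forms; indeed $\theta$-deformation leaves the underlying module $\Omega^1(M)\otimes\C$ unchanged and affects only the algebra and bimodule structures, twisting the product of a weight-$r$ by a weight-$s$ homogeneous element by the scalar $e^{\pi i\langle r,\theta s\rangle}$. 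First I would record that weight-$0$ forms are central: for $\omega$ of weight $0$ both twisting scalars are trivial, so $a\star\omega=a\omega=\omega\star a$ for every $a$. Thus $\Omega^1(M)_0\subset\mathcal{Z}\big(\Omega^1_{\slashed{D}}(M_\theta)\big)$, and by Definition \ref{def:hom-centre} it suffices to show that these central forms generate the whole bimodule as a \emph{left} module.

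The geometric input is the following consequence of freeness. Writing $B=M/\T^n$, the isotypic subspace $C^\infty(M)_r$ of functions is the space of smooth sections of the complex line bundle $L_r$ associated to the principal $\T^n$-bundle $M\to B$ via the character $r$, and $L_r$ is invertible with inverse $L_{-r}$. Hence, for each $r$, a partition of unity subordinate to a trivialising cover produces finite families $f_{r,\alpha}\in C^\infty(M)_r$ and $g_{r,\alpha}\in C^\infty(M)_{-r}$ with $\sum_\alpha f_{r,\alpha}g_{r,\alpha}=1$. This is exactly the frame-type statement driving the argument, and it is precisely where freeness enters: it guarantees that each associated line bundle is genuinely invertible.

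The generation is then a short and, remarkably, untwisted computation. Let $\omega$ be homogeneous of weight $r$. Because $\theta$ is antisymmetric, $\langle r,\theta r\rangle=0$, so the products $g_{r,\alpha}\star\omega$ and $f_{r,\alpha}\star g_{r,\alpha}$ carry trivial scalar factors and coincide with the classical products. Consequently $g_{r,\alpha}\star\omega=g_{r,\alpha}\omega\in\Omega^1(M)_0$ is central, and
\[
\sum_\alpha f_{r,\alpha}\star\big(g_{r,\alpha}\star\omega\big)=\sum_\alpha f_{r,\alpha}g_{r,\alpha}\,\omega=\omega,
\]
expressing $\omega$ as a left $C^\infty(M_\theta)$-combination of central forms. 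Since $\Omega^1_{\slashed{D}}(M_\theta)$ is finitely generated projective, a finite generating set involves only finitely many weights, and each generator is a finite sum of its isotypic components, which again lie in $\Omega^1_{\slashed{D}}(M_\theta)\cong\Omega^1(M)\otimes\C$ as images of the averaging projections $\xi\mapsto\int_{\T^n}e^{-2\pi i\langle r,t\rangle}(t\cdot\xi)\,dt$. Applying the identity above componentwise writes every generator, hence every element, inside the left submodule generated by $\mathcal{Z}\big(\Omega^1_{\slashed{D}}(M_\theta)\big)$, so the calculus is centred.

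I expect the main obstacle to be the careful justification of the geometric step: converting freeness into the global unit decomposition $\sum_\alpha f_{r,\alpha}g_{r,\alpha}=1$, and confirming that the isotypic projections preserve the smooth, finitely generated module. The twisted product, by contrast, poses no real difficulty, since the only bicharacter values entering the computation — those pairing a weight with its negative — are trivial by antisymmetry of $\theta$.
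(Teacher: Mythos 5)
This theorem is not proved in the paper you were given: it is imported verbatim from \cite{Gosw&alCentredBimod}, where the argument runs by showing that the $\T^n$-invariant one-forms are central in the deformed bimodule and that, because the action is free, finitely many invariant forms already generate $\Omega^1(M)$ classically (a descent/local-triviality statement for the principal bundle $M\to M/\T^n$), whence they generate the deformed module as well, since left and right multiplication by arbitrary elements on weight-zero forms is untwisted. Your first two steps are sound and close in spirit to this: weight-zero forms are central (both bicharacter factors are trivial), and the saturation property $\sum_\alpha f_{r,\alpha}g_{r,\alpha}=1$ with $f_{r,\alpha}\in C^\infty(M)_r$, $g_{r,\alpha}\in C^\infty(M)_{-r}$ is a correct consequence of freeness, and your untwisted computation $\omega=\sum_\alpha f_{r,\alpha}\star(g_{r,\alpha}\star\omega)$ correctly places every \emph{homogeneous} form in the left module generated by the centre.

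The gap is in your globalization step. The claim that, because $\Omega^1_{\slashed{D}}(M_\theta)$ is finitely generated projective, ``a finite generating set involves only finitely many weights, and each generator is a finite sum of its isotypic components'' is unjustified and false for a general generating set: a generic smooth one-form has infinitely many nonzero isotypic components, its isotypic series converging only in the Fr\'echet topology, and finite generation does not by itself allow you to choose generators with finite isotypic support. What your argument actually establishes is that the algebraic submodule $N$ generated by the centre contains all homogeneous forms, hence is \emph{dense}; but centredness in the sense of Definition \ref{def:hom-centre} is an algebraic generation statement, and dense submodules of finitely generated projective modules need not be everything without a further argument. The gap can be closed in (at least) two ways, both requiring real content: (i) prove the descent fact that invariant forms generate $\Omega^1(M)$ classically (e.g.\ via a finite trivializing cover of $M/\T^n$ and a partition of unity, or via the slice theorem showing invariant forms span every fibre of $T^*M$, plus compactness), then observe that for invariant $\eta$ one has $a\star\eta=a\eta$ for \emph{all} $a$, so the classical generation transfers verbatim to the deformed product; or (ii) show directly that $N$, which is also a classical $C^\infty(M)$-submodule (again because multiplication against weight-zero forms is untwisted), spans every fibre of $T^*M\ox\C$ and invoke the standard fact that a submodule of sections spanning every fibre of a bundle over a compact manifold is the full section module. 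Either route replaces your false finite-sum claim; without one of them the proof is incomplete precisely at the point where freeness must be converted into \emph{algebraic} rather than topological generation.
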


Thus we can use the correspondences between vector fields and forms that we have exhibited in this paper in the case of $\theta$-deformations coming from free torus actions, in particular, the 2-torus and 3-sphere. All we need is the existence of Levi-Civita connections on these $\theta$-deformed spectral triples. More specifically, we need the existence of a $\dagger$-bimodule connection for the canonical braiding $\sigma$ on our centred bimodule of differential forms. This is proved as \cite[Theorem 5.4]{Gosw&alCentredBimod}.

\begin{theorem}[\cite{Gosw&alCentredBimod}]
    Let $M$ be a compact spin Riemannian manifold with a free isometric torus action. Then there exists a unique Hermitian and torsion-free $\dagger$-bimodule connection $(\RA{\nabla}_\theta, \sigma)$ on the $\theta$-deformed spectral triple $(C^\infty(M_\theta), L^2(M, \slashed{S}), \slashed{D})$.
\end{theorem}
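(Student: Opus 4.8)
The plan is to recognise the $\theta$-deformed calculus as a centred Hermitian differential calculus in the sense of Definition~\ref{def:canonicalcentred}, and then to apply our existence and uniqueness result, Theorem~\ref{thm: existence and uniqueness on centred bimods}; uniqueness also follows from Corollary~\ref{cor:unique}. Write $\cA = C^\infty(M_\theta)$ and let $\Omega^1_{\slashed{D}}(M_\theta)$ be its one-forms. Condition~(1) of Definition~\ref{def:canonicalcentred}, centredness of the bimodule, is exactly the preceding theorem of \cite{Gosw&alCentredBimod}: freeness of the isometric torus action forces the centre $\cZ(\Omega^1_{\slashed{D}}(M_\theta))$, which is the torus-invariant (degree-zero) spectral subspace, to generate the whole module.

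First I would assemble the underlying Hermitian first-order structure of Definition~\ref{defn:first-order}. The involution $\dag$ is induced by the operator adjoint, exactly as in the Example following Definition~\ref{defn:first-order}, and the inner product is the positive definite pre-$C^*$-inner product coming from the Riemannian metric $g$. Being a $\theta$-deformation of the finitely generated projective module $\Omega^1(M)\ox\C$, the module $\Omega^1_{\slashed{D}}(M_\theta)$ is again finitely generated projective; and since $\cA$ is a dense spectrally invariant subalgebra of $C(M_\theta)$, the positive definite inner product admits a frame, and the existence of a frame forces strong non-degeneracy, condition~(4), as recorded in the remarks following Definition~\ref{defn:first-order}. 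This verifies Definition~\ref{defn:first-order}.

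The main task is conditions~(2) and~(3) of Definition~\ref{def:canonicalcentred} for the canonical braiding $\sigma^{\mathrm{can}}$: invariance $g\circ\sigma^{\mathrm{can}}=g$ of the bilinear quantum metric, and the junk inclusion $JT^2_{\slashed{D}}\subset\IM\Psi$ with $\Psi=\tfrac{1}{2}(1+\sigma^{\mathrm{can}})$. Both I would transfer from the undeformed calculus of Theorem~\ref{thm:LC-class}. By $\cA$-bilinearity of $g$ and centredness it is enough to check~(2) on a pair of central one-forms $\omega,\eta$, where $\sigma^{\mathrm{can}}$ is the flip; but central forms are torus-invariant and hence undeformed, and there $g$ is the symmetric classical metric, so $g(\eta\ox\omega)=g(\omega\ox\eta)$. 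For~(3) I would use the identification $T^2_{\slashed{D}}(M_\theta)\cong T^2_{\slashed{D}}(M)$ of the underlying spectral subspaces, under which $\sigma^{\mathrm{can}}$ corresponds to the classical flip and the deformed junk to the classical junk; by Theorem~\ref{thm:LC-class} and \cite[Example 4.26]{MRLC} the latter is exactly the symmetric tensors $\IM\Psi$, so the inclusion (in fact equality) persists after deformation.

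With conditions~(1)--(3) in hand, $(\Omega^1_{\slashed{D}}(M_\theta),\dag,\pairing{\cdot}{\cdot},\sigma^{\mathrm{can}})$ is a centred Hermitian differential calculus, and Theorem~\ref{thm: existence and uniqueness on centred bimods} produces a unique Hermitian torsion-free $\sigma$-$\dag$-bimodule connection $\nablar^{G}=:\RA{\nabla}_\theta$, with the closed formula \eqref{eq:explicit-formula} available in terms of any finite $\dag$-invariant central generating sequence. The hard part is the $\theta$-deformation bookkeeping of the previous paragraph: one must argue that the canonical braiding, the quantum metric and the junk submodule are all controlled by the torus-invariant forms, so that they deform trivially and the classical verification of Theorem~\ref{thm:LC-class} carries over to $M_\theta$ essentially unchanged.
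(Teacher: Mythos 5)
Your route is genuinely different from the paper's, because the paper does not actually prove this statement: it is quoted from the literature, with the existence/uniqueness attributed to \cite[Theorem 5.4]{Gosw&alCentredBimod}, and the paper then remarks that it also follows from \cite[Theorem 6.12]{MRLC} together with two $\theta$-deformation facts, namely that the braiding $\sigma_\theta$ of \cite[Section 6.2]{MRLC} coincides with $\sigma^{\mathrm{can}}$ on centred bimodules, and that $\Psi_\theta=\tfrac12(1+\sigma_\theta)$. You instead verify the axioms of a centred Hermitian differential calculus (Definition \ref{def:canonicalcentred}) for $\Omega^1_{\slashed{D}}(M_\theta)$ and then invoke the paper's own Theorem \ref{thm: existence and uniqueness on centred bimods}. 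That is a legitimate and more self-contained strategy (it is the one the introduction advertises, and what Corollary \ref{cor:AW-fini} implicitly relies on), and your treatment of the first-order structure --- finite projectivity surviving deformation, frames from spectral invariance, hence strong non-degeneracy --- matches the paper's remarks and is fine. It also buys an explicit formula \eqref{eq:explicit-formula} for the connection, which the citation-based route does not give.

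Two problems remain. First, a fixable inaccuracy: the centre of $\Omega^1_{\slashed{D}}(M_\theta)$ is \emph{not} in general the torus-invariant (degree-zero) subspace. A homogeneous form of degree $r$ is central precisely when the deformation phases $\Theta(r,\cdot)$ are all trivial, and for rational $\theta$ this admits nonzero solutions $r$, so the centre can strictly contain the invariants. What your argument actually needs is the reverse containment plus generation: invariant forms are central, and by freeness they generate, so both $g\circ\sigma^{\mathrm{can}}$ and $g$ (being $\cA$-bilinear) need only be compared on invariant pairs, where your "undeformed" computation is valid. Second, and this is the genuine gap: condition (3). Your claim that under the spectral-subspace identification $T^2_{\slashed{D}}(M_\theta)\cong T^2_{\slashed{D}}(M)$ the deformed junk corresponds to the classical junk is exactly the nontrivial assertion $\Psi_\theta=\tfrac12(1+\sigma_\theta)$. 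Junk is defined through the kernel of $\pi_\dee$ on \emph{universal} forms, and the universal calculus of $C^\infty(M_\theta)$ is not that of $C^\infty(M)$, since the multiplication has changed; so this does not follow from Theorem \ref{thm:LC-class} by bookkeeping. It is precisely the content of \cite[Section 6.2]{MRLC} (or the corresponding computation in \cite{Gosw&alCentredBimod}). You flag this as ``the hard part'' but do not supply it, so as written your proof is incomplete at exactly the point where the paper leans on the cited literature; citing that result explicitly, or reproducing its computation, would close the gap.
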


This result is also a consequence of Theorem 6.12 in \cite{MRLC}, which guarantees the existence and uniqueness of the Hermitian and torsion-free connection for a general $\theta$-deformed spectral triple (even when the torus action is not necessarily free). To see this, we simply note that the braiding $\sigma_\theta$ considered in \cite[Section 6.2]{MRLC} coincides with the canonical braiding when $\Omega^1_{\slashed{D}}(M_\theta)$ is centred. Indeed, if $\omega$ and $\eta$ are central homogeneous forms in $\Omega^1_{\slashed{D}}(M_\theta)$ then $\sigma_\theta(\omega \otimes \eta) = \Theta(\omega, \eta) \eta \otimes \omega = \eta \otimes \omega$. We also note that \cite{MRLC} proves the identity $\Psi_\theta = \frac{1}{2}(1 + \sigma_\theta)$ for general $\theta$-deformations.

All the assumptions of Theorem \ref{thm: pseudo-Riemannian_calculus} are thus satisfied by $\theta$-deformed spectral triples coming from free toral actions. Hence we can state the following result.

\begin{corollary}
\label{cor:AW-fini}
    Let $M$ be a compact spin Riemannian manifold with a free isometric torus action. Then $(\LA{\cX}(C^\infty(M_\theta)), \bra{\cdot}\ket{\cdot}_{C^\infty(M_\theta)}, D_\ph, \LA{\nabla})$ is a (right) real pseudo-Riemannian calculus with strongly non-degenerate inner product, where $\LA{\nabla}$ is the affine connection induced by the unique (left) Levi-Civita connection on $\Omega_{\slashed{D}}^1(M_\theta)$.
\end{corollary}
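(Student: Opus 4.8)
The plan is to recognise this statement as the specialisation of Corollary \ref{cor:existence-on-vector-fields} to the $\theta$-deformed calculus, so that the only substantive task is to verify that $\Omega_{\slashed{D}}^1(M_\theta)$ is a centred Hermitian differential calculus in the sense of Definition \ref{def:canonicalcentred}; the entire duality and pseudo-Riemannian content then follows from the general machinery already assembled.

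First I would check the three conditions of Definition \ref{def:canonicalcentred} for $(\Omega_{\slashed{D}}^1(M_\theta), \dag, \Psi, \pairing{\cdot}{\cdot}, \sigma^\mathrm{can})$. Centredness of $\Omega_{\slashed{D}}^1(M_\theta)$ over $C^\infty(M_\theta)$ is precisely the theorem of \cite{Gosw&alCentredBimod} quoted above, and rests on the freeness of the torus action. The inner product is the positive-definite pre-$C^*$-inner product induced by $g$, so $\Omega_{\slashed{D}}^1(M_\theta)$ is finitely generated projective and, as recorded in the remarks following Definition \ref{defn:first-order}, the existence of frames forces strong non-degeneracy (condition 4); thus we have an Hermitian first-order structure. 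For conditions 2 and 3 I would invoke the identification set out in the discussion preceding this corollary, namely that the braiding $\sigma_\theta$ of \cite[Section 6.2]{MRLC} coincides with $\sigma^\mathrm{can}$ on the centred bimodule and that $\Psi_\theta = \frac{1}{2}(1+\sigma_\theta)$. Since $\sigma^\mathrm{can}$ is the flip on tensors of central forms, invariance $g\circ\sigma^\mathrm{can}=g$ reduces to symmetry of the quantum metric on central generators, while the junk inclusion $JT^2_\dee\subset\IM\Psi$ is the $\theta$-deformed image of the classical identity used in Theorem \ref{thm:LC-class}, where junk coincides with the symmetric tensors $=\IM\Psi$.

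Having established that $\Omega_{\slashed{D}}^1(M_\theta)$ is a centred Hermitian differential calculus, I would apply Corollary \ref{cor:existence-on-vector-fields} directly to conclude that $(\LA{\cX}(C^\infty(M_\theta)), \bra{\cdot}\ket{\cdot}, D_\ph, \LA{\nabla})$ is a right real pseudo-Riemannian calculus with strongly non-degenerate inner product, with $\LA{\nabla}$ the affine connection induced by the unique left Levi-Civita connection on $\Omega_{\slashed{D}}^1(M_\theta)$. (The existence of that connection is guaranteed either by our Theorem \ref{thm: existence and uniqueness on centred bimods} or, equivalently, by the cited results of \cite{Gosw&alCentredBimod} and \cite[Theorem 6.12]{MRLC}.) The only genuine obstacle is the verification step: matching the analytic $C^*$-data of the deformed spectral triple to the algebraic axioms of a centred Hermitian calculus, and in particular confirming $\sigma_\theta=\sigma^\mathrm{can}$. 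Once these identifications are in place, the corollary is a mechanical application of the general framework, since all the substantive content — duality, metricity, torsion-freeness, reality and strong non-degeneracy — has already been proved in full generality.
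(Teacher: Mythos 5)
Your proposal is correct and follows essentially the same route as the paper: the paper likewise reduces the corollary to checking that $\Omega_{\slashed{D}}^1(M_\theta)$ is a centred Hermitian differential calculus (centredness from \cite{Gosw&alCentredBimod}, strong non-degeneracy from positivity and frames, and the identifications $\sigma_\theta=\sigma^\mathrm{can}$ and $\Psi_\theta=\tfrac{1}{2}(1+\sigma_\theta)$ from \cite{MRLC}) and then invokes the general dualisation machinery. The only cosmetic difference is that the paper applies Theorem \ref{thm: pseudo-Riemannian_calculus} with the existence of the Hermitian torsion-free $\dagger$-bimodule connection quoted from \cite{Gosw&alCentredBimod} (or \cite{MRLC}), whereas you route primarily through Corollary \ref{cor:existence-on-vector-fields}, i.e.\ the paper's internal algebraic existence result (Theorem \ref{thm: existence and uniqueness on centred bimods}) --- a distinction you yourself note is immaterial.
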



\appendix
\section{Finitely generated projective inner product $\dag$-bimodules}
\label{sec:fgpipdb}
In this appendix we collect some general results for abstract finitely generated projective strongly non-degenerate inner product bimodules.
\begin{definition}
\label{defn:stern-bimod-app}
A \emph{$\dag$-bimodule} over the $*$-algebra $\A$ is an $\A$-bimodule $\mathcal{X}$ that is equipped with  an antilinear involution $\dag:\mathcal{X}\to\mathcal{X}$ such that $(axb)^{\dag}=b^{*}x^{\dag}a^{*}$. An \emph{inner product $\dag$-bimodule} is a $\dag$-bimodule $\X$ equipped with a pairing
\[
\pairing{\cdot}{\cdot}_\A:\X\times\X\to \A,\quad (x,y)\mapsto \pairing{x}{y}_\A,
\]
such that
\begin{enumerate}
\item $\pairing{x}{y a}_\A=\pairing{x}{y}_\A a$;
\item $\pairing{x}{y}^{*}_\A=\pairing{y}{x}_\A$;
\item $\pairing{ax}{y}=\pairing{x}{a^{*}y}$;
\item $\pairing{\cdot}{\cdot}$ is \emph{strongly non-degenerate}: the map $x\mapsto \left( y \mapsto \pairing{x^\dagger}{y}_\cA \right)$ is an isomorphism $\X\to \overrightarrow{\mathrm{Hom}}_{\A}(\X,\A)$ of left $\A$-modules.
\end{enumerate}
\end{definition}
\begin{remark}
A strongly non-degenerate inner product is weakly non-degenerate, in the sense that the map $x\mapsto (y\mapsto \pairing{x}{y}_\A)$ is injective.
\end{remark}
We will omit the subscript $\A$ on inner products below.

\begin{lemma} 
\label{lem:generatingsequences}
Let $\X$ be a finitely generated projective right module with a strongly non-degenerate inner product $\pairing{\cdot}{\cdot}$ (satisfying 1,2 and 4 of Definition \ref{defn:stern-bimod-app}). Then there exist a finite set of pairs $\{(x_{i},x'_{i})\}_{i=1}^{n}\subset\X\times\X$ such that
\[x=\sum_{i=1}^{n}x_{i}\pairing{x'_{i}}{x}=\sum_{i=1}^{n}x'_{i}\pairing{x_{i}}{x}\]
 for all $x\in\X$. We call $\{(x_{i},x'_{i})\}_{i=1}^{n}\subset\X\times\X$ a generating pair.
\end{lemma}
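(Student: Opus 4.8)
The plan is to combine the dual basis lemma for finitely generated projective modules with strong non-degeneracy to produce the first identity essentially for free, and then to deduce the second identity as a formal consequence of the first using only weak non-degeneracy. The point worth emphasising is that a single generating pair $\{(x_i,x_i')\}$ serves both identities even though the roles of $x_i$ and $x_i'$ are interchanged; this symmetry is not built into the dual basis data and must be extracted from the pairing.

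First I would invoke the dual basis lemma: since $\X$ is finitely generated projective as a right $\A$-module, there exist finitely many $e_1,\dots,e_n\in\X$ and right module homomorphisms $\phi_1,\dots,\phi_n\in\RA{\HOM}_\A(\X,\A)$ with $x=\sum_i e_i\,\phi_i(x)$ for all $x\in\X$. Strong non-degeneracy (condition 4) asserts that $g\colon w\mapsto\pairing{w^\dag}{\cdot}$ is an isomorphism onto $\RA{\HOM}_\A(\X,\A)$; since $\dag$ is a bijection of $\X$, the map $v\mapsto\pairing{v}{\cdot}$ is likewise surjective onto $\RA{\HOM}_\A(\X,\A)$. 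Hence each $\phi_i$ can be written as $\phi_i(x)=\pairing{x_i'}{x}$ for some $x_i'\in\X$. Setting $x_i:=e_i$ then yields the first identity $x=\sum_i x_i\pairing{x_i'}{x}$.

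The main step is the second identity. I would consider the right $\A$-linear endomorphism $T(x):=\sum_i x_i'\pairing{x_i}{x}$, whose right-linearity follows from condition 1, and show $T=\mathrm{id}$ by testing against the inner product. From conditions 1 and 2 one derives $\pairing{wa}{z}=a^*\pairing{w}{z}$, so that for all $x,z\in\X$,
\[
\pairing{T(x)}{z}=\sum_i\pairing{x_i'\,\pairing{x_i}{x}}{z}=\sum_i\pairing{x_i}{x}^*\pairing{x_i'}{z}=\sum_i\pairing{x}{x_i}\pairing{x_i'}{z}=\pairing{x}{\textstyle\sum_i x_i\pairing{x_i'}{z}}=\pairing{x}{z},
\]
where the last two equalities use condition 1 and the first identity applied to $z$. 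Thus $\pairing{T(x)-x}{z}=0$ for all $z$, and weak non-degeneracy (a consequence of strong non-degeneracy, via the Remark) forces $T(x)=x$, which is exactly the second identity.

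The one genuine obstacle is precisely this second identity, since it is not part of the dual basis data and the argument must be carried out without recourse to condition 3: only conditions 1 and 2, together with the surjectivity and injectivity consequences of condition 4, are available. Everything else is routine, relying only on the biadditivity and sesquilinearity encoded in conditions 1--2, so no further subtlety is expected.
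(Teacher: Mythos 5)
Your proposal is correct and follows essentially the same route as the paper: the paper's explicit construction (splitting the surjection $\A^n\to\X$ and representing the components $s_i$ via strong non-degeneracy) is exactly the dual basis lemma you invoke, and your verification of the second identity — testing $\sum_i x_i'\pairing{x_i}{x}$ against arbitrary $z$ using $\pairing{wa}{z}=a^*\pairing{w}{z}$ (a consequence of conditions 1 and 2) and concluding by weak non-degeneracy — is the paper's computation verbatim. The only difference is presentational: you package the projectivity step as a citation of the dual basis lemma and are slightly more explicit about factoring condition 4 through the bijection $\dag$, neither of which changes the argument.
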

\begin{proof}
Choose a finite generating set $\{x_{i}\}$ for $\X$, that is for each $x\in\X$ there exist $a_{i}\in \A$ such that $x=\sum_{i} x_{i}a_{i}.$
Consider the surjective bimodule map
\[\pi:\A^{n}\to \X,\quad (a_{i})\mapsto \sum_{i} x_{i}a_{i}.\]
Since $\X$ is projective there is a splitting $s:\X\to \A^{n}$, $s(x)=(s_{i}(x))$, with $s_{i}\in \textrm{Hom}(\X,\A)$. Now since $\X$ carries a strongly non-degenerate inner product, there exist $x'_{i}\in \X$ such that $s_{i}(x)=\pairing{x'_{i}}{x}$ and thus
\[
x=\sum_{i} x_{i}\pairing{x'_{i}}{x}.
\]
For every $z$ we have
\[\pairing{\sum_{i}x'_{i}\pairing{x_{i}}{x}}{z}=\sum_{i}\pairing{x}{x_{i}}\pairing{x'_{i}}{z}=\pairing{x}{\sum_{i} x_{i}\pairing{x'_{i}}{z}}=\pairing{x}{z},\]
so that by non-degeneracy $x=\sum_{i}x'_{i}\pairing{x_{i}}{x}$ as well. 
\end{proof}
\begin{remark} If $\A$ is dense and spectral invariant inside a $C^{*}$-algebra $A$, then any positive definite (hence \emph{weakly} non-degenerate) finitely generated projective right inner product module admits a \emph{frame} \cite{FL02, MRLC}. This a set $\{x_i\}$ such that $\{(x_i,x_i)\}$ satisfies the conclusion of Lemma \ref{lem:generatingsequences} (that is, $x_i=x_{i}'$). The proof of existence of such a frame requires some analysis, afforded by the spectral invariance assumption. Existence of frames, in turn, \emph{implies} strong non-degeneracy of the inner product.
\end{remark}
\begin{remark} 
\label{rem:algebraicsequences}
Given a pair of sequences $\{(x_{i},x_{i}')\}$ as in Lemma \ref{lem:generatingsequences}, the two tensor $\sum_{i}x_{i}\otimes x_{i}'$ is a generalised quantum metric relative to the inner product $\pairing{\cdot}{\cdot}$ in the sense of \cite[Definition 1.5]{BMbook}. Such sequences are an algebraic analogue of the notion of frame in a Hilbert $C^{*}$-module, and have appeared in the algebra literature, for instance \cite{BMconnections}, and recently in \cite{AH25}.
\end{remark}
\begin{corollary} Let $\X$ and $\Y$ be finitely generated projective strongly non-degenerate right inner product modules. For every $T\in\mathrm{Hom}_{\A}(\X,\Y)$ there exists $T^{*}\in\mathrm{Hom}_{\A}(\Y,\X)$ such that for $x\in\X$ and $y\in\Y$ we have $\pairing{Tx}{y}=\pairing{x}{T^{*}y}$.
\end{corollary}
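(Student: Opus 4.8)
The plan is to construct the adjoint $T^{*}$ explicitly from a generating pair for $\X$, the target module $\Y$ entering only through its inner product. First I would apply Lemma \ref{lem:generatingsequences} to the finitely generated projective strongly non-degenerate module $\X$, obtaining a finite generating pair $\{(x_{i},x_{i}')\}$ with $x=\sum_{i}x_{i}\pairing{x_{i}'}{x}$ for every $x\in\X$. I would then define $T^{*}\colon\Y\to\X$ by $T^{*}y:=\sum_{i}x_{i}'\,\pairing{Tx_{i}}{y}$, which manifestly lands in $\X$ since each summand $x_{i}'\pairing{Tx_{i}}{y}$ is a right multiple of $x_{i}'\in\X$ by the algebra element $\pairing{Tx_{i}}{y}\in\A$.

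Next I would verify that $T^{*}$ is a right $\A$-module map: using property 1 of the inner product, $T^{*}(yb)=\sum_{i}x_{i}'\pairing{Tx_{i}}{yb}=\sum_{i}x_{i}'\pairing{Tx_{i}}{y}b=(T^{*}y)b$. For the adjoint identity I would expand $x$ via the generating pair and use that $T$ is right-linear, so that $Tx=\sum_{i}(Tx_{i})\pairing{x_{i}'}{x}$. Pairing with $y$ and applying the derived rule $\pairing{wa}{y}=a^{*}\pairing{w}{y}$ (a consequence of properties 1 and 2) together with the conjugate symmetry $\pairing{x_{i}'}{x}^{*}=\pairing{x}{x_{i}'}$ gives $\pairing{Tx}{y}=\sum_{i}\pairing{x}{x_{i}'}\pairing{Tx_{i}}{y}$. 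On the other hand, expanding $\pairing{x}{T^{*}y}$ directly by right-linearity in the second argument (property 1) yields the same sum $\sum_{i}\pairing{x}{x_{i}'}\pairing{Tx_{i}}{y}$, so the two agree and $\pairing{Tx}{y}=\pairing{x}{T^{*}y}$.

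The only real subtlety is bookkeeping with the conjugate-linearity of the inner product in its first argument: since $\pairing{xa}{y}=a^{*}\pairing{x}{y}$, one must use the expansion $x=\sum_{i}x_{i}\pairing{x_{i}'}{x}$ (rather than the companion expansion $x=\sum_{i}x_{i}'\pairing{x_{i}}{x}$) so that the coefficients $\pairing{x_{i}'}{x}$ convert to $\pairing{x}{x_{i}'}$ on the left of $\pairing{Tx_{i}}{y}$ and match the expansion of $\pairing{x}{T^{*}y}$. Beyond this, there is no genuine obstacle: the argument is the purely algebraic analogue of the standard proof that operators on a Hilbert $C^{*}$-module equipped with a frame are adjointable, with the generating pair of Lemma \ref{lem:generatingsequences} playing the role of the frame.
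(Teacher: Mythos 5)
Your proposal is correct and follows essentially the same route as the paper: both construct the adjoint explicitly from a finite generating pair supplied by Lemma \ref{lem:generatingsequences} and verify the identity by expanding both pairings, exactly as in the frame argument for Hilbert $C^{*}$-modules. The only (immaterial) difference is that you set $T^{*}y=\sum_{i}x_{i}'\pairing{Tx_{i}}{y}$ where the paper sets $T^{*}y=\sum_{i}x_{i}\pairing{Tx_{i}'}{y}$, which amounts to swapping the two symmetric roles of $x_{i}$ and $x_{i}'$ in the generating pair; your extra check that $T^{*}$ is right $\A$-linear is a point the paper leaves implicit.
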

\begin{proof}
Let $\{(x_{i},x_{i}')\}$ be a generating pair for $\X$ and set $T^{*}y:=\sum_{i}x_{i}\pairing{Tx_{i}'}{y}$. Then
\begin{align*}
\pairing{x}{T^{*}y}=\sum_{i}\pairing{x}{x_{i}}\pairing{Tx_{i}'}{y}=\sum_{i}\pairing{Tx_{i}'\pairing{x_{i}}{x}}{y}=\pairing{Tx}{y},
\end{align*}
as claimed.
\end{proof}
All tensor powers $\X^{\otimes m}$ carry right and left inner products, defined inductively via
\[\pairing{x_{1}\otimes y_{1}}{x_{2}\otimes y_{2}}:=\pairing{y_{1}}{\pairing{x_{1}}{x_{2}}y_{2}}.\]
Using these inner products we obtain bimodule maps
\begin{align}
\alphar&:\X^{\otimes(n+k)}\to \overrightarrow{\textnormal{Hom}}_{\A}(\X^{\otimes k},\X^{\otimes n}), \quad\alphar(x\otimes y)(z):=x\pairing{y^{\dag}}{z}_\A\nonumber\\
\alphal&:\X^{\otimes(n+k)}\to \overleftarrow{\textnormal{Hom}}_{\A}(
\X^{\otimes k},\X^{\otimes n}), \quad\alphal( x\otimes y)(z):={}_\A\pairing{z}{y^\dag}x,
\label{eq:alphas}
\end{align}
where $y,z\in \X^{\otimes k}$ and $x\in \X^{\otimes n}$. The following is an extension of Proposition \ref{prop:musicalisos} and should be viewed as abstract raising and lowering indices on tensors via the musical isomorphisms.
\begin{proposition}
\label{prop:alpha}
Let $\X$ be a finitely generated projective $\dag$-bimodule with a strongly non-degenerate inner product. Then for all $n,k\geq 0$, the maps $\alphar$ and $\alphal$ are isomorphisms. In particular, the inner product on each $\X^{\otimes k}$ is strongly non-degenerate.
\end{proposition}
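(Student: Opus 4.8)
The plan is to deduce everything from the existence of generating pairs (Lemma \ref{lem:generatingsequences}) together with the observation that such pairs behave well under tensor products, thereby bypassing any direct attempt to propagate (weak) non-degeneracy through the tensor powers.

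First I would record that a generating pair is all that is needed to invert $\alphar$. Concretely, given a generating pair $\{(w_j, w_j')\}$ for $\X^{\otimes k}$, I would set, for $A \in \RA{\HOM}_\A(\X^{\otimes k}, \X^{\otimes n})$,
\[
\beta(A) := \sum_j A(w_j) \otimes (w_j')^\dag \in \X^{\otimes(n+k)},
\]
and check that $\beta$ is a two-sided inverse of $\alphar$. One direction, $\alphar(\beta(A)) = A$, is immediate from right $\A$-linearity of $A$ and the generating identity $z = \sum_j w_j\pairing{w_j'}{z}$; the other, $\beta(\alphar(x\otimes y)) = x \otimes y$, unwinds using the $\dag$-rule $(xb)^\dag = b^* x^\dag$, property 2 of Definition \ref{defn:stern-bimod-app}, and the second generating identity applied to $y^\dag$, namely $y^\dag = \sum_j w_j'\pairing{w_j}{y^\dag}$. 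The case $n=0$ is exactly the assertion that the inner product on $\X^{\otimes k}$ is strongly non-degenerate, so the \emph{in particular} clause comes for free.

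It therefore remains to produce generating pairs for every $\X^{\otimes k}$, and here lies the crux of the argument. Rather than re-invoking Lemma \ref{lem:generatingsequences} for $\X^{\otimes k}$ (which would be circular, as that lemma presupposes strong non-degeneracy of $\X^{\otimes k}$), I would build them by induction: if $\{(x_i, x_i')\}$ generates $\X$ and $\{(w_j, w_j')\}$ generates $\X^{\otimes k}$, then $\{(x_i \otimes w_j,\, x_i' \otimes w_j')\}_{i,j}$ generates $\X^{\otimes(k+1)}$. Verifying this is a short computation on simple tensors $x\otimes w$: in $\sum_{i,j}(x_i\otimes w_j)\pairing{x_i'\otimes w_j'}{x \otimes w}$ one contracts the inner $\X^{\otimes k}$-sum via the generating identity for $\X^{\otimes k}$ (applied to $\pairing{x_i'}{x}\,w$), pushes the resulting element of $\A$ across the balanced tensor, and finishes with the generating identity for $\X$. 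The base case $k=1$ is Lemma \ref{lem:generatingsequences}.

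Finally, for $\alphal$ I would run the mirror-image argument using a \emph{left} generating pair. These exist because the left inner product ${}_\A\pairing{\cdot}{\cdot} = \pairing{(\cdot)^\dag}{(\cdot)^\dag}$ is strongly non-degenerate as well: conjugating the right duality isomorphism $g$ by the involutions $\dag$ produces the left duality isomorphism (one checks $g^L = \dag \circ g \circ \dag$ as maps $\X \to \LA{\HOM}_\A(\X,\A)$), so the left analogue of Lemma \ref{lem:generatingsequences} applies and tensors up exactly as above, yielding a left inverse of $\alphal$. I expect the main obstacle to be precisely this passage of strong non-degeneracy to tensor powers: weak non-degeneracy and injectivity do \emph{not} survive tensoring, since a vanishing tensor pairing $\pairing{y_1}{\pairing{x_1}{x_2}y_2}$ cannot be tested against all of $\X$ unless $\pairing{x_1}{x_2}$ is invertible. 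It is the generating-pair formalism—the algebraic surrogate for Hilbert-module frames—that makes the induction go through, and the careful bookkeeping between $\dag$, $*$, and the two module structures in the inverse-checking is where the real work lies.
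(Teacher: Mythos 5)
Your proposal is correct and follows essentially the same route as the paper's own proof: both arguments tensor a generating pair for $\X$ (Lemma \ref{lem:generatingsequences}) up to a generating pair for $\X^{\otimes k}$ by the same inductive contraction of the inner factor, and then invert $\alphar$ and $\alphal$ by the explicit frame-type formula. If anything, your inverse $\beta(A)=\sum_j A(w_j)\otimes (w_j')^{\dag}$ is the more careful version, since the paper's displayed formula $\alphar^{-1}(T)=\sum_{j} Tx_{j}\otimes x'_{j}$ omits the $\dag$ that your two-sided verification correctly requires.
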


\begin{proof} Given a generating pair $\{(x_{i},x'_{i})\}_{i\in I}$ for $\X$ indexed by the finite set $I$. Consider the set
\begin{equation}
\label{eq:productgeneratingsequence}
\{(x_{i_{1}}\otimes \cdots \otimes x_{i_{k}}, x'_{i_{1}}\otimes\cdots\otimes x'_{i_{k}})\}_{j=(i_{1},\cdots,i_{k})\in I^{k}}\end{equation}
and any elementary tensor $\xi=\xi_{1}\otimes\cdots\otimes \xi_{k}$ and compute
\begin{align*}
\sum_{j\in I^{k}} x_{j}\pairing{x'_{j}}{\xi}=\sum_{j\in I^{k-1}}\sum_{n\in I}x_{j}\otimes x_{n}\pairing{x'_{n}}{\pairing{x'_{j}}{\xi_1 \ox \cdots \ox \xi_{k-1} } \xi_k} = \sum_{j\in I^{k-1}}x_{j}\otimes\pairing{x'_{j}}{\xi_1 \ox \cdots \ox \xi_{k-1}}\xi_k.
\end{align*}
Now employ induction to conclude that \eqref{eq:productgeneratingsequence} is a pair of generating sequences for $\X^{\otimes k}$. The inverses of the maps $\alphar$ and $\alphal$ are given by
\begin{align*}
\alphar^{-1}(T)=\sum_{j\in I^{k}} Tx_{j}\otimes x'_{j},\quad \alphal^{-1}(T):=\sum_{j\in I^{k}}x'_{j}\otimes Tx_{j},
\end{align*}
which proves the claim.
\end{proof}
We now restrict ourselves further to the case of centred bimodules, that is, modules generated by their centre. Recall that a braiding on a $\dag$-$\cA$-bimodule $\cM$ is an invertible map $\sigma : \cM \ox_\cA \cM \rightarrow \cM \ox_\cA \cM$ such that $\sigma^{-1} \circ \dag = \dag \circ \sigma$. The following result is due to Skeide and we recall it here for convenience.

\begin{proposition}\cite{Skeide}
\label{prop: braiding}
    On a centred $\dagger$-bimodule $\cM$ over $\cA$, there exists a unique braiding $\sigma^\mathrm{can}$ which satisfies $\sigma^{\mathrm{can}}(x \otimes y) = y \otimes x$ whenever $x$ or $y \in \cZ(\cM)$. This braiding is an involution.
\end{proposition}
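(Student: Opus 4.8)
The plan is to construct $\sigma^{\mathrm{can}}$ by prescribing it as the flip on tensors of central elements and then extending by bimodule linearity, as in \cite{Skeide}. First I would record two consequences of centredness that make the construction consistent. The centre $\cZ:=\cZ(\cM)$ is $\dag$-invariant: if $z$ is central then $(az)^\dag=(za)^\dag$ gives $z^\dag a^*=a^*z^\dag$ for all $a$, so $z^\dag\in\cZ$. Moreover the algebra centre $\cZ(\cA)$ acts centrally on $\cM$: writing $m=\sum_i a_i z_i$ with $z_i\in\cZ$ and taking $c\in\cZ(\cA)$, one computes $cm=\sum_i a_i c z_i=\sum_i a_i z_i c=mc$. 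The decisive observation is that a tensor $z\otimes w$ of two central elements is itself central in the bimodule $\cM\otimes_\cA\cM$: using $z,w\in\cZ$ and the balancing relation, $a(z\otimes w)=(az)\otimes w=(za)\otimes w=z\otimes(aw)=z\otimes(wa)=(z\otimes w)a$ for every $a\in\cA$. Since $\cM$ is generated by $\cZ$ as a bimodule, these central tensors generate $\cM\otimes_\cA\cM$ as a bimodule.

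Next I would define $\sigma^{\mathrm{can}}(z\otimes w):=w\otimes z$ on tensors of central elements and extend by $\cA$-bilinearity. The main obstacle is well-definedness: one must check that the flip respects the $\cA$-balancing relations, i.e. that $\sum_k z_k\otimes w_k=0$ (with $z_k,w_k\in\cZ$) forces $\sum_k w_k\otimes z_k=0$. This is precisely the content of Skeide's argument; the mechanism is that, because central elements commute with all of $\cA$ and $\cZ(\cA)$ acts centrally on $\cM$, the flip on $\cZ\otimes_{\cZ(\cA)}\cZ$ is symmetric and preserves the kernel of the natural map $\cZ\otimes_{\cZ(\cA)}\cZ\to\cM\otimes_\cA\cM$, so it descends to a well-defined map. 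Granting this, $\sigma^{\mathrm{can}}$ restricts to the flip whenever just one factor is central: for $z\in\cZ$ and arbitrary $m=\sum_j w_j a_j$ with $w_j\in\cZ$, one has $z\otimes m=\sum_j(z\otimes w_j)a_j$, and applying the definition gives $\sum_j(w_j\otimes z)a_j=\sum_j w_j\otimes a_j z=m\otimes z$, reducing the general statement to the central-central case.

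Finally I would verify the structural properties, all of which are immediate on the central generators and extend by bimodule linearity. Involution is clear, since $\sigma^{\mathrm{can}}(\sigma^{\mathrm{can}}(z\otimes w))=\sigma^{\mathrm{can}}(w\otimes z)=z\otimes w$, so $(\sigma^{\mathrm{can}})^2=\mathrm{id}$ and in particular $\sigma^{\mathrm{can}}$ is invertible with $(\sigma^{\mathrm{can}})^{-1}=\sigma^{\mathrm{can}}$. For the braiding identity I use the $\dag$-structure $(x\otimes y)^\dag=y^\dag\otimes x^\dag$ on $\cM\otimes_\cA\cM$ together with the $\dag$-invariance of $\cZ$: on central generators $\dag\circ\sigma^{\mathrm{can}}(z\otimes w)=(w\otimes z)^\dag=z^\dag\otimes w^\dag=\sigma^{\mathrm{can}}(w^\dag\otimes z^\dag)=\sigma^{\mathrm{can}}\circ\dag(z\otimes w)$, hence $\sigma^{\mathrm{can}}\circ\dag=\dag\circ\sigma^{\mathrm{can}}=(\sigma^{\mathrm{can}})^{-1}\circ\dag$, which is exactly the braiding condition. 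Uniqueness is then automatic: any braiding with the prescribed flip property agrees with $\sigma^{\mathrm{can}}$ on the central generators, and since braidings are bimodule maps they are determined by their values on a generating set. The only genuinely nontrivial point is the well-definedness flagged above, which is where the centrality hypotheses do all the work.
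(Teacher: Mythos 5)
Your proposal is correct and takes essentially the same route as the paper: the flip is prescribed on tensors of central elements and extended by bimodule linearity, with the well-definedness of that extension (the only hard step) deferred to Skeide exactly as the paper itself does via its citation, and the one-central-factor identity, the involution property, the braiding compatibility with $\dagger$ (using your correct observation that the centre is $\dagger$-invariant), and uniqueness are then all checked on the central generators. One small imprecision worth noting: since you extend by $\cA$-bilinearity, the relations the flip must be shown to preserve are those with coefficients, $\sum_k (z_k \otimes w_k)\, c_k = 0 \Rightarrow \sum_k (w_k \otimes z_k)\, c_k = 0$ (which, by the centrality of $z_k \otimes w_k$ that you established, is the general case reduced to right coefficients), not merely the coefficient-free relations $\sum_k z_k \otimes w_k = 0$ that you state; as this step is attributed to Skeide in both your argument and the paper's proof, it does not affect the verdict.
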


\begin{proof}
    Uniqueness follows from bilinearity of $\sigma^\mathrm{can}$. Indeed, $\cZ(\X)$ generates $\cM$ so two-tensors of the form $x \otimes y$ with $x$ and $y$ central generate $\X \otimes_\cA \X$ as a bimodule. Existence follows from the fact that the flip map $x \otimes y \mapsto y \otimes x$ is well-defined when $x$ and $y$ are central, and central elements generate $\cM$. If $x$ is central and $y=\sum_{i} e_{i}a_{i}=\sum_{i} a_{i}e_{i}$, with $e_{i}$ central and $a_{i}\in \A$, then 
\[
\sigma^{\mathrm{can}}(x\otimes y)=\sum_{i} a_{i}\sigma(x\otimes e_{i})=\sum_{i} a_{i}e_{i}\otimes x=y\otimes x.
\]    
     Clearly $\sigma^\mathrm{can} = (\sigma^\mathrm{can})^{-1}$ on central forms, so by bi-$\cA$-linearity $\sigma^\mathrm{can}$ is an involution on $\cM \otimes_\cA \cM$. It satisfies the braiding condition $\sigma^\mathrm{can} \circ \dagger = \dagger \circ (\sigma^\mathrm{can})^{-1}=\dagger \circ \sigma^\mathrm{can}$.
\end{proof}

In the case of a centred bimodule $\X$ with a symmetric inner product we can improve on Lemma \ref{lem:generatingsequences}.
\begin{lemma}
\label{lem:daginvariantcentral}
Let $\X$ be a centred finitely generated projective $\dag$-bimodule with non-degenerate inner product $\pairing{\cdot}{\cdot}$ such that $g\circ\sigma^{\mathrm{can}}=g$. Then there exists a finite set of pairs $\{(x_{i},x'_{i})\}_{i=1}^{n}\subset\mathcal{Z}(\X)\times\mathcal{Z}(\X)\subset\X\times\X$ of central elements such that $\{(x_{i}^{\dag},x'^{\dag}_{i})\}_{i=1}^{n}=\{(x_{i},x'_{i})\}_{i=1}^{n}$ and for all $x\in \X$ we have 
\[x=\sum_{i}x_{i}\pairing{x'_{i}}{x}=\sum_{i}x'_{i}\pairing{x_{i}}{x}=\sum_{i} x_{i}^{\dag}\pairing{x'^{\dag}_{i}}{x}=\sum_{i} x'^{\dag}_{i}\pairing{x_{i}^{\dag}}{x}.\]
\end{lemma}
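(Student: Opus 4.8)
The plan is to first reduce the four displayed identities to a single one, then build a generating pair of \emph{central} elements, and finally symmetrise it under $\dag$.

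First I would observe that the four identities are not independent. If $\{(x_i,x_i')\}$ is any finite family with $x=\sum_i x_i\pairing{x_i'}{x}$ for all $x$, then the non-degeneracy argument of Lemma \ref{lem:generatingsequences} applies verbatim (pair $\sum_i x_i'\pairing{x_i}{x}$ against an arbitrary $z$ and use properties 1--3 of Definition \ref{defn:stern-bimod-app}, then invoke weak non-degeneracy) and yields the companion identity $x=\sum_i x_i'\pairing{x_i}{x}$. If in addition the family is $\dag$-invariant as a set, $\{(x_i^\dag,x_i'^\dag)\}=\{(x_i,x_i')\}$, then the two remaining identities are merely reindexings of these. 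Hence it suffices to produce a finite, $\dag$-invariant family of pairs of central elements satisfying the single identity $x=\sum_i x_i\pairing{x_i'}{x}$.

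For the existence of a central generating pair I would use Proposition \ref{prop:alpha} in the case $n=k=1$, giving the bimodule isomorphism $\alphar\colon\X\otimes_\A\X\xrightarrow{\cong}\RA{\HOM}_\A(\X,\X)$, $\alphar(\eta\otimes\rho)(\omega)=\eta\pairing{\rho^\dag}{\omega}$. The identity map $\mathrm{id}_\X$ is a bimodule map, hence a central element of $\RA{\HOM}_\A(\X,\X)$ (a right-linear endomorphism is central for the structure $(aTb)(\omega)=aT(b\omega)$ exactly when it is also left-linear). As $\alphar$ is a bimodule isomorphism, $t:=\alphar^{-1}(\mathrm{id}_\X)$ is a central element of $\X\otimes_\A\X$. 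Because $\X$ is centred, the central two-tensors are spanned over $\mathcal{Z}(\A)$ by $\{e\otimes e':e,e'\in\mathcal{Z}(\X)\}$ (the Skeide-type identification $\mathcal{Z}(\X\otimes_\A\X)=\mathcal{Z}(\X)\otimes_{\mathcal{Z}(\A)}\mathcal{Z}(\X)$ underlying Proposition \ref{prop: braiding}), so $t=\sum_i e_i\otimes e_i''$ with $e_i,e_i''\in\mathcal{Z}(\X)$. Applying $\alphar$ and setting $e_i':=e_i''^\dag$ (again central) gives $x=\sum_i e_i\pairing{e_i'}{x}$.

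Finally I would symmetrise under $\dag$, using the hypothesis $g\circ\sigma^{\mathrm{can}}=g$. On central $\omega,\eta$ the braiding is the flip, so, recalling $g(\omega\otimes\eta)=-\pairing{\omega^\dag}{\eta}$ from Definition \ref{defn:metric-bilinear}, the hypothesis reads $\pairing{\omega^\dag}{\eta}=\pairing{\eta^\dag}{\omega}$. Writing an arbitrary $x=\sum_k z_k a_k$ with $z_k$ central and using properties 1 and 3 of the inner product upgrades this to $\pairing{x^\dag}{\eta}=\pairing{\eta^\dag}{x}$ for every $x$ and every central $\eta$. Applying the generating identity to $x^\dag$, taking $\dag$, commuting the central $e_i^\dag$ past the resulting scalar, and invoking this symmetry then shows $x=\sum_i e_i^\dag\pairing{e_i'^\dag}{x}$, so $\{(e_i^\dag,e_i'^\dag)\}$ is again a central generating pair. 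The indexed family $\{(\tfrac12 e_i,e_i')\}_i\cup\{(\tfrac12 e_i^\dag,e_i'^\dag)\}_i$ is then $\dag$-invariant as a set (as $\tfrac12$ is real and $\dag$ is involutive) and satisfies $\sum\tfrac12 e_i\pairing{e_i'}{x}+\sum\tfrac12 e_i^\dag\pairing{e_i'^\dag}{x}=x$; relabelling yields the required pair.

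The main obstacle is the structural input in the third paragraph: identifying $\mathcal{Z}(\X\otimes_\A\X)$ with $\mathcal{Z}(\X)\otimes_{\mathcal{Z}(\A)}\mathcal{Z}(\X)$, so that the central two-tensor $\alphar^{-1}(\mathrm{id}_\X)$ really can be written with central tensor legs. This is precisely where centredness (via Skeide, Proposition \ref{prop: braiding}) is essential and is the only place the argument exceeds the formal manipulations of Lemma \ref{lem:generatingsequences}; the reduction of the four identities to one and the $\dag$-symmetrisation are then routine consequences of the symmetry of the inner product on the centre.
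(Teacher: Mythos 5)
Your opening reduction and your closing $\dag$-symmetrisation are both correct, and they coincide with the paper's own proof: the companion identity $x=\sum_i x_i'\pairing{x_i}{x}$ follows verbatim from the non-degeneracy argument in Lemma \ref{lem:generatingsequences}, and your final computation (commute the central element past the scalar, use $\pairing{x^\dag}{\eta}=\pairing{\eta^\dag}{x}$ for central $\eta$, which is exactly $g\circ\sigma^{\mathrm{can}}=g$ plus the flip property, then average the family with its $\dag$-image) is precisely the chain of equalities in the paper.

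The genuine gap is in your third paragraph, the production of a generating pair with \emph{both} legs central. Your observation that $t:=\alphar^{-1}(\mathrm{id}_{\X})$ is a central element of $\X\ox_\A\X$ is correct and appealing, but the next step --- that central two-tensors lie in the $\mathcal{Z}(\A)$-span of $\{e\ox e':\, e,e'\in\mathcal{Z}(\X)\}$ --- is not a result available to you. Proposition \ref{prop: braiding} neither states nor relies on an identification $\mathcal{Z}(\X\ox_\A\X)=\mathcal{Z}(\X)\ox_{\mathcal{Z}(\A)}\mathcal{Z}(\X)$: its proof uses only that the tensors $e\ox e'$ with central legs generate $\X\ox_\A\X$ as an $\A$-bimodule, i.e.\ with arbitrary coefficients in $\A$, and centredness alone does not let you replace those coefficients by central ones once the generators satisfy relations. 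Worse, under the hypotheses of the lemma your claimed identification is essentially \emph{equivalent} to what is being proved: given a generating pair $\{(e_i,f_i)\}$ with all entries central, properties 1--3 of Definition \ref{defn:stern-bimod-app} and centrality give, for any central two-tensor $c$,
\[
a\pairing{f_i\ox f_j}{c}=\pairing{(f_i\ox f_j)a^{*}}{c}=\pairing{a^{*}(f_i\ox f_j)}{c}=\pairing{f_i\ox f_j}{ac}=\pairing{f_i\ox f_j}{ca}=\pairing{f_i\ox f_j}{c}\,a,
\]
so expanding $c$ via the induced generating pair for $\X^{\ox 2}$ (Proposition \ref{prop:alpha}) exhibits it with $\mathcal{Z}(\A)$-coefficients; conversely, your argument derives the generating pair from the identification. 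The proposal is therefore circular exactly at its crux, and the statement you invoke is in spirit the stronger-than-centredness hypothesis (condition (ii) of \cite[Theorem 4.1]{Gosw&alCentredBimod}) that this paper emphasises it does not assume. The paper's proof does not pass through the tensor square at all: it asserts that for a centred module the generating sequences of Lemma \ref{lem:generatingsequences} can be chosen central in both legs, and then performs the same symmetrisation you give; to repair your argument you would need an actual proof of that fact (or of your identification), which is the real content of this step.
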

\begin{proof}
Since $\X$ is centred we can find central sequences $\{(x_{i},x'_{i})\}$ satisfying the conclusion of Lemma \ref{lem:generatingsequences}. To obtain $\dag$-invariant sets, choosing $x'_{i}$ as above we have
\begin{align*}
\sum_{i} x_{i}^{\dag}\pairing{x'^{\dag}_{i}}{x}&=\sum_{i}\pairing{x'^{\dag}_{i}}{x}x_{i}^{\dag}=\sum_{i} \pairing{x^{\dag}}{x'_{i}}x_{i}^{\dag}=\left(\sum_{i} x_{i}\pairing{x'_{i}}{x^{\dag}}\right)^{\dag}=x.
\end{align*}
Therefore
\[x=\frac{1}{2}\left(\sum_{i} x_{i}\langle x'_{i},x\rangle+x_{i}{^\dag}\langle x'^{\dag}_{i},x\rangle\right).\]
Thus the set $\{(\frac{x_{i}}{\sqrt{2}},\frac{x_{i}^\dag}{\sqrt{2}}),(\frac{x'_{i}}{\sqrt{2}},\frac{x'^{\dag}_{i}}{\sqrt{2}})\}$ satisfies the conclusion of the Lemma.
\end{proof}


\end{document}